\documentclass[a4paper, 10pt, final]{amsart}

\usepackage{xcolor}

\usepackage[T1]{fontenc}
\usepackage[english]{babel}
\usepackage{fullpage}
\usepackage{amsmath}
\usepackage{dsfont}\let\mathbb\mathds
\usepackage{setspace}\onehalfspacing
\usepackage{amsfonts}
\usepackage{amssymb}
\usepackage{graphicx}
\usepackage{amscd}
\usepackage{mathrsfs}
\usepackage{fancybox}

\usepackage{lmodern}
\usepackage{graphicx}
\usepackage{fancyheadings}
\usepackage{chngcntr}
\counterwithout{figure}{section}
\usepackage{tikz}
\usetikzlibrary{calc}
\usepackage{pgfplots}
\usepgfplotslibrary{groupplots}
\usepgfplotslibrary{units}

\usetikzlibrary{arrows}
\usepackage{tkz-berge}

\usetikzlibrary{graphs,shapes.geometric,arrows,fit,matrix,positioning}

\usepackage[extra]{tipa}

\usepackage[OT2,T1]{fontenc}

\usepackage{amsmath, amsthm, amsfonts}
\cornersize{2}
\setlength{\fboxsep}{1 mm}
\setlength{\fboxrule}{2 mm}

\setcounter{secnumdepth}{4}
\setcounter{tocdepth}{1}

\makeatletter
\newcounter {subsubsubsection}[subsubsection]
\renewcommand\thesubsubsubsection{\thesubsubsection .\@alph\c@subsubsubsection}
\newcommand\subsubsubsection{\@startsection{subsubsubsection}{4}{\z@}%
                                     {-3.25ex\@plus -1ex \@minus -.2ex}%
                                     {1.5ex \@plus .2ex}%
                                     {\normalfont\normalsize\bfseries}}
\newcommand*\l@subsubsubsection{\@dottedtocline{4}{10.0em}{4.1em}}
\newcommand*{\subsubsubsectionmark}[1]{}
\makeatother

\usepackage{graphicx}

\DeclareMathOperator{\Lie}{Lie}
\DeclareMathOperator{\weight}{weight}
\DeclareMathOperator{\id}{id}

\DeclareMathOperator{\loc}{loc}

\DeclareMathOperator{\Ad}{Ad}
\DeclareMathOperator{\coeff}{coeff}
\DeclareMathOperator{\Frac}{Frac}

\DeclareMathOperator{\crys}{crys}
\DeclareMathOperator{\depth}{depth}

\DeclareMathOperator{\DR}{DR}
\DeclareMathOperator{\rev}{rev}
\DeclareMathOperator{\iter}{iter}

\DeclareMathOperator{\Spec}{Spec}

\DeclareMathOperator{\DMR}{DMR}

\DeclareMathOperator{\RT}{RT}
\DeclareMathOperator{\inv}{inv}
\DeclareMathOperator{\Gal}{Gal}

\DeclareMathOperator{\dch}{dch}

\DeclareMathOperator{\Orb}{Orb}

\DeclareMathOperator{\un}{un}
\DeclareMathOperator{\Li}{Li}

\DeclareMathOperator{\elim}{elim}
\DeclareMathOperator{\Vect}{Vect}
\DeclareMathOperator{\Map}{Map}

\DeclareMathOperator{\shft}{shft}
\DeclareMathOperator{\comp}{comp}
\DeclareMathOperator{\har}{har}

\DeclareMathOperator{\KZ}{KZ}

\DeclareMathOperator{\an}{an}

\theoremstyle{definition}

\newtheorem{Theorem}{Theorem}[section]
\newtheorem{Proposition}[Theorem]{Proposition}

\newtheorem{Lemma}[Theorem]{Lemma}

\newtheorem{Definition}[Theorem]{Definition}

\newtheorem{Interpretation}[Theorem]{Interpretation}
\newtheorem{Fact}[Theorem]{Fact}

\newtheorem{Example}[Theorem]{Example}

\newtheorem{Lemma-Definition}[Theorem]{Lemma-Definition}

\newtheorem{Comment}[Theorem]{Comment}

\newtheorem{Remark}[Theorem]{Remark}
\newtheorem{Proposition-Definition}[Theorem]{Proposition-Definition}
\newtheorem{Nota Bene}[Theorem]{Nota Bene}

\newtheorem{Examples}[Theorem]{Examples}

\setcounter{tocdepth}{1}

\input cyracc.def
\DeclareFontFamily{U}{russian}{}
\DeclareFontShape{U}{russian}{m}{n}
        { <5><6> wncyr5
        <7><8><9> wncyr7
        <10><10.95><12><14.4><17.28><20.74><24.88> wncyr10 }{}
\DeclareSymbolFont{Russian}{U}{russian}{m}{n}
\DeclareSymbolFontAlphabet{\mathcyr}{Russian}
\makeatletter
\let\@math@cyr\mathcyr
\renewcommand{\mathcyr}[1]{\@math@cyr{\cyracc #1}}
\makeatother
\newcommand{\sh}{\mathcyr{sh}} 

\usepackage[top=3.5cm, bottom=3.5cm, left=2.5cm, right=2.5cm]{geometry}

\author{David Jarossay}

\title{}

\address{Institut de Recherche Math\'{e}matique Avanc\'{e}e, Universit\'{e} de Strasbourg, 7 rue Ren\'{e} Descartes, 67000 Strasbourg, France}
\email{jarossay@math.unistra.fr}

\begin{document}

\begin{center}
\begin{Large}
\textbf{AN EXPLICIT THEORY OF $\pi_{1}^{\un,\crys}(\mathbb{P}^{1} - \{0,\mu_{N},\infty\})$}\end{Large}
\\ \text{ }
\begin{large}
\\ \textbf{II : Algebraic relations of cyclotomic $p$-adic multiple zeta values
\\ \text{ }
\\ II-2 : From standard algebraic relations of weighted multiple harmonic sums to those of cyclotomic $p$-adic multiple zeta values}
\end{large}
\end{center}

\maketitle

\noindent
\newline

\begin{abstract}
Let $X_{0}=\mathbb{P}^{1} - (\{0,\infty\} \cup \mu_{N})\text{ }/\text{ }\mathbb{F}_{q}$, with $N \in \mathbb{N}^{\ast}$ and $\mathbb{F}_{q}$ of characteristic $p>0$ and containing a primitive $N$-th root of unity. We establish an explicit theory of the crystalline pro-unipotent fundamental groupoid of $X_{0}$.
\newline In part I, we have computed explicitly the Frobenius, and in particular cyclotomic $p$-adic multiple zeta values. In part II, we use part I to understand the algebraic relations of cyclotomic $p$-adic multiple zeta values via explicit formulas ; this is in particular a study of the harmonic Ihara actions and the maps of comparisons between them introduced in I-2 and I-3.
\newline In II-1, we have developed the basics of algebraic theory of cyclotomic sequences of prime weighted multiple harmonic sums and adjoint cyclotomic multiple zeta values viewed as variants of those of the algebraic theory of cyclotomic multiple zeta values.
\newline In this II-2, we use part I and II-1 to show that one can read some standard algebraic relations of cyclotomic $p$-adic multiple zeta values via the explicit formulas and  via the standard algebraic relations of sequences of multiple harmonic sums. This amounts to say that the harmonic Ihara actions and the comparison maps are compatible with algebraic relations. The two main results are two "harmonic" versions of Besser-Furusho-Jafari's theorem that $p$-adic multiple zeta values satisfy the regularized double shuffle relations. This gives two different answers to what we could call "the adjoint variant" of a question of Deligne and Goncharov about reading the quasi-shuffle relation of $p$-adic multiple zeta values via explicit formulas.
\end{abstract}

\noindent
\newline

\numberwithin{equation}{section}

\tableofcontents

\newpage

\section{Introduction}

\subsection{}

This is the second part of an explicit study of the crystalline pro-unipotent fundamental groupoid of $X_{0}=\mathbb{P}^{1} - (\{0,\infty\} \cup \mu_{N}) \text{ }/\text{ }\mathbb{F}_{q}$, where $N \in \mathbb{N}^{\ast}$ is prime to the characteristic $p$ of $\mathbb{F}_{q}$, and $\mathbb{F}_{q}$ contains a primitive $N$-th root of unity. In the first part, we have computed explicitly the Frobenius. In this part II, we use this computation to understand explicitly the algebraic relations between the periods associated with it : $p$-adic multiple zeta values ($N=1$) and their "cyclotomic" generalizations (any $N$).
\newline In this II-2, we show how to retrieve explicitly algebraic relations of $p$-adic multiple zeta values from the ones of multiple harmonic sums, using the formulas of part I, and using the part II-1 which was, from this point of view, a preliminary step.

\subsection{}

The basics of the algebraic theory of cyclotomic multiple zeta values are recalled in II-1, \S1. In this paper, we will focus in particular on double shuffle relations, which are among the standard algebraic properties of cyclotomic multiple zeta values. Let $\xi \in \overline{\mathbb{Q}} \hookrightarrow \mathbb{C}$ be a primitive $N$-th root of unity, $z_{i}= \xi^{i}$ for $i \in \{1,\ldots,N\}$, $z_{0} = 0$, and $\omega_{z_{i}} = \frac{dz}{z-z_{i}}$ for all $i \in \{0,\ldots,N\}$. Cyclotomic multiple zeta values are complex numbers which have an expression as iterated integrals and an expression as iterated series :
\begin{equation} \label{eq:integral} \zeta \big(
\begin{array}{c} z_{j_{d}},\ldots,z_{j_{1}} \\ s_{d},\ldots,s_{1} \end{array} \big) = \int_{0<t_{1}<\ldots<t_{n}<1} \omega_{z_{i_{n}}}(t_{n}) \ldots  \omega_{z_{i_{2}}}(t_{2}) \omega_{z_{i_{1}}}(t_{1}) = \sum_{0<n_{1}<\ldots <n_{d}}
\frac{\big( \frac{z_{j_{2}}}{z_{j_{1}}} \big)^{n_{1}} \ldots \big(\frac{1}{z_{j_{d}}}\big)^{n_{d}}}{n_{1}^{s_{1}}\ldots n_{d}^{s_{d}}} 
\end{equation}
\noindent where $j_{1},\ldots,j_{d} \in \{1,\ldots,N\}$, $s_{1},\ldots,s_{d} \in \mathbb{N}^{\ast}$, and $(i_{n},\ldots,i_{1})=(\underbrace{0, \ldots, 0}_{s_{d}-1}, j_{d}, \ldots \underbrace{0, \ldots ,0}_{s_{1}-1},j_{1})$, such that $(s_{d},j_{d}) \not= (1,N)$. The integer $s_{d}+\ldots+s_{1}$ is called the weight and the integer $d$ is called the depth.
\newline The double shuffle relations (i.e. the shuffle relation coming from iterated integrals and the quasi-shuffle relation coming from iterated series), are two ways of expressing any product $\zeta(w)\zeta(w')$, for two words $w,w'$, as a $\mathbb{Z}$-linear combination of cyclotomic multiple zeta values, implied respectively by their two expressions from equation (\ref{eq:integral}). The first examples, in low weight, resp. in low depth, follow from the following facts :
\begin{equation} \label{eq:exemple integral}\int_{0<t_{1}<1} \times \int_{0<t'_{1}<1} = 
\int_{0<t_{1}<t'_{1}<1} +\int_{0<t'_{1}<t_{1}<1} 
\end{equation}
\begin{equation} \label{eq:exemple series}\sum_{0<n_{1}} \times \sum_{0<n'_{1}} = 
\sum_{0<n_{1} < n'_{1}} + \sum_{0<n'_{1} <n_{1}} + \sum_{0<n_{1}=n'_{1}} 
\end{equation}
\noindent The general formulas, $\zeta(w)\zeta(w')=\zeta(w\text{ }\sh\text{ }w')$ and $\zeta(w)\zeta(w')=\zeta(w \ast w')$, where $\sh$ is the shuffle product and $\ast$ is the quasi-shuffle products, follow from the natural generalization of (\ref{eq:exemple integral}) and (\ref{eq:exemple series}) to all weights, resp. all depths.
\newline Let us consider the alphabet $e_{Z} = \{e_{0},e_{z_{1}},\ldots,e_{z_{N}}\}$, let us associate with each letter $e_{z_{j}}$ the differential form $\frac{dz}{z-z_{j}}$, and let us view the cyclotomic multiple zeta value in (\ref{eq:integral}) as indexed by the word $e_{0}^{s_{d}-1}e_{z_{j_{d}}} \ldots e_{0}^{s_{1}-1}e_{z_{j_{1}}}$. The generating series $\Phi_{\KZ}$ of cyclotomic multiple zeta values is an element of the non-commutative algebra of power series
$$ \mathbb{C} \langle \langle e_{Z} \rangle\rangle =
\mathbb{C} \langle \langle e_{0},e_{z_{1}},\ldots,e_{z_{N}} \rangle\rangle = \{ \sum_{w\text{ word on }e_{Z}} f[w] w \text{ }|\text{ }\forall w,\text{ } f[w] \in \mathbb{C} \} $$
\noindent and we have, for all words, $(-1)^{d}\Phi_{\KZ}[e_{0}^{s_{d}-1}e_{z_{j_{d}}} \ldots e_{0}^{s_{1}-1}e_{z_{j_{1}}}] = \zeta \big(
\begin{array}{c} z_{i_{d}},\ldots,z_{i_{1}} \\ s_{d},\ldots,s_{1} \end{array} \big)$.
\newline Let $K = \Frac(W(\mathbb{F}_{q})) \subset \overline{\mathbb{Q}_{p}}$ ; for each $\alpha \in \mathbb{N}^{\ast}$, one has an element $\Phi_{p,\alpha} \in K\langle \langle e_{Z}\rangle\rangle$ defined via the Frobenius of $\pi_{1}^{\un,\crys}(X_{0})$ iterated $\alpha$ times, whose coefficients are called the $p$-adic cyclotomic multiple zeta values defined as $\zeta_{p,\alpha}\big(
\begin{array}{c} z_{j_{d}},\ldots,z_{j_{1}} \\ s_{d},\ldots,s_{1} \end{array} \big) = (-1)^{d}\Phi_{p,\alpha}[e_{0}^{s_{d}-1}e_{z_{j_{d}}} \ldots e_{0}^{s_{1}-1}e_{z_{j_{1}}}]$. They satisfy the double shuffle relations \cite{Besser Furusho}, \cite{Furusho Jafari}. Conjecturally, the algebraic relations that they satisfy are those of cyclotomic multiple zeta values modulo the ideal $(\zeta(2))$.

\subsection{} Let us review how we have expressed the explicit computation of the Frobenius, more specifically, the computation of $\zeta_{p,\alpha}$, in part I.
\newline 
\newline The explicit formulas for cyclotomic $p$-adic multiple zeta values are expressed in terms of the weighted multiple harmonic sums, which are, essentially, the coefficients of the series expansion at $0$ of the canonical solution to the connexion $\nabla_{\KZ}$ on $\pi_{1}^{\un,\DR}(X_{K})$, called hyperlogarithms. They are the numbers

\begin{equation}
\label{eq:har}\har_{n} \big(
\begin{array}{c} z_{j_{d+1}},\ldots,z_{j_{1}} \\ s_{d},\ldots,s_{1} \end{array} \big) = n^{s_{d}+\ldots+s_{1}} \sum_{0<n_{1}<\ldots<n_{d}<n}
\frac{\big( \frac{z_{j_{2}}}{z_ {j_{1}}} \big)^{n_{1}} \ldots \big(\frac{z_{j_{d+1}}}{z_{j_{d}}}\big)^{n_{d}} 
\big(\frac{1}{z_{j_{d+1}}}\big)^{n}}{n_{1}^{s_{1}}\ldots n_{d}^{s_{d}}} \in \overline{\mathbb{Q}}
\end{equation}

\noindent with $n \in \mathbb{N}^{\ast}$, $s_{d},\ldots,s_{1} \in \mathbb{N}^{\ast}$ and $j_{d+1},\ldots,j_{1} \in \{1,\ldots,N\}$.
\newline We call prime weighted multiple harmonic sums the weighted multiple harmonic sums whose upper bound $n$ is a power of $p$.
\newline 
\newline We have defined three frameworks of computations, which will be formalized as operads in II-3. We denote them by DR, DR-RT and RT, where DR is a short for De Rham as usual, and RT is a short for "rational", "Rham-Taylor", and "rigid-Taylor" at the same time, and refers to computations on multiple harmonic sums viewed as $p$-adic numbers. The definition of these frameworks of computation is reviewed in \S1.3 of II-1.
\newline 
\newline We have defined a variant $K \langle \langle e_{Z}\rangle\rangle_{\har}$ of $K \langle \langle e_{Z}\rangle\rangle$, designed for containing the non-commutative generating series of multiple harmonic sums $\har_{n}=(\har_{n}(w))_{w\text{ word}}$, $n \in \mathbb{N}^{\ast}$. It arises in three different ways, which turn out to be isomorphic to each other : $K \langle \langle e_{Z}\rangle\rangle_{\har}^{\DR} \simeq K \langle \langle e_{Z}\rangle\rangle_{\har}^{\DR-\RT} \simeq K \langle \langle e_{Z}\rangle\rangle^{\RT}_{\har}$. For each $I \subset \mathbb{N}$, we view the sequence $\har_{I} = (\har_{n})_{n\in I} $ as an element of $\Map(I,K \langle \langle e_{Z}\rangle\rangle_{\har})$.
\newline 
\newline Using the differential equation of the Frobenius, we have defined by explicit formulas three continuous actions $\circ_{\har}^{\DR}$, $\circ_{\har}^{\DR-\RT}$ and $\circ_{\har}^{\RT}$ of certain topological groups, defined as subgroups of $\pi_{1}^{\un}(X_{K},-\vec{1}_{1},\Vect{1}_{0})(K)$ viewed canonically as a group, on, respectively, $\Map(\mathbb{N},K \langle \langle e_{Z}\rangle\rangle_{\har}^{\DR})$, $\Map(\mathbb{N},K \langle \langle e_{Z}\rangle\rangle_{\har}^{\DR-\RT})$ and
\newline $\Map(\mathbb{N},K \langle \langle e_{Z}\rangle\rangle^{\RT}_{\har})$. We call them harmonic Ihara actions, because of their relation with the Ihara product on $\pi_{1}^{\un}(X_{K},-\vec{1}_{1},\vec{1}_{0})$ and because they are used to express equations involving multiple harmonic sums and to study certain sequences of multiple harmonic sums as periods.
\newline 
\newline The equations from I-2 making $\zeta_{p,\alpha}$ explicit are (see \S2 and I-2 for the definition of each term) are :
\begin{equation} \label{eq:har DR RT}
\har_{p^{\alpha}\mathbb{N}} =\Ad_{\Phi_{p,\alpha}}(e_{1})
\text{ } \circ_{\har}^{\DR,\RT} \text{ }
\har_{\mathbb{N}}^{(p^{\alpha})}
\end{equation}
\begin{equation} \label{eq:har RT RT} 
\har_{p^{\alpha}\mathbb{N}} =  \har_{p^{\alpha}}
\text{ } \circ_{\har}^{\RT} \text{ }
\har_{\mathbb{N}}^{(p^{\alpha})} 
\end{equation}

\noindent Comparing equations (\ref{eq:har DR RT}) and (\ref{eq:har RT RT}) gave in I-2 the "comparison maps" $\Sigma^{\RT}$ and $\Sigma^{\DR}_{\inv}$ satisfying the following equations :

\begin{equation} \label{eq:series expansion}\Ad_{\Phi_{p,\alpha}}(e_{1}) = \Sigma^{\RT}\har_{p^{\alpha}} 
\end{equation}
\begin{equation} \label{eq:inverse series expansion}\har_{p^{\alpha}} = \Sigma^{\DR}_{\inv} \Ad_{\Phi_{p,\alpha}}(e_{1})
\end{equation}
\noindent And finally we proved in I-2 :
\begin{equation}
\Sigma_{\inv}^{\DR} \circ \Sigma^{\RT} = \id
\end{equation}

\noindent In this preliminary version of this paper, we omit the formulas from I-3, which involve the harmonic Ihara action $\circ_{\har}^{\DR}$ and express how the Frobenius iterated $\alpha$ times ($\alpha \in \mathbb{N}^{\ast}$) varies in function of $\alpha$ viewed as a $p$-adic integer.

\subsection{} The problematic of this II-2 is to read algebraic relations of cyclotomic $p$-adic multiple zeta values via explicit formulas.
\newline Deligne and Goncharov have asked this question for the quasi-shuffle relation (for $N=1$ and $\alpha=1$) in \cite{Deligne Goncharov}, \S5.28, just after their definition of $p$-adic multiple zeta values ; here is their formulation : "\emph{il serait int\'{e}ressant aussi de disposer pour ces coefficients d'expressions $p$-adiques qui rendent clair qu'ils v\'{e}rifient des identit\'{e}s du type}
$$ \coeff(e_{0}^{n-1}e_{1})\coeff(e_{0}^{m-1}e_{1})
= \coeff(e_{0}^{m-1}e_{1}e_{0}^{n-1}e_{1}) + \coeff(e_{0}^{m-1}e_{1}e_{0}^{n-1}e_{1}) + 
\coeff(e_{0}^{m+n-1}e_{1}) $$
\noindent \emph{qui pour $\dch(\sigma)$ expriment que}
$$ \sum \frac{1}{k^{n}} \sum \frac{1}{l^{m}} = 
\sum_{k>l} \frac{1}{k^{n}}\frac{1}{l^{m}} +  \sum_{l>k} \frac{1}{k^{n}}\frac{1}{l^{m}} + \sum \frac{1}{k^{n+m}} \text{ }^{"} $$

\noindent where $\dch(\sigma)$ defined in \cite{Deligne Goncharov}, \S5.16 is Drinfeld's KZ associator $\Phi_{\KZ}$, the generating series of multiple zeta values (\S1.1).
\newline 
\newline Indeed, the case of the quasi-shuffle relation is particularly significative, for the following reasons.
\newline Whereas the integral shuffle relation for $p$-adic multiple zeta values follows directly from their definition, the fact that $p$-adic multiple zeta values satisfy the quasi-shuffle relation requires a proof. However, the proof, found by Besser and Furusho \cite{Besser Furusho} and Furusho and Jafari \cite{Furusho Jafari}, is formal : it follows from formal properties of Coleman functions and does not require explicit formulas.
\newline Moreover, any explicit formula for $p$-adic multiple zeta values necessarily follows from the differential equation of the Frobenius, and thus necessarily involves the multiple harmonic sums of equation (\ref{eq:har}) and, basically, the quasi-shuffle relation is  a property of multiple harmonic sums, as we can see by the discussion in \S1.1.
\newline Finally, the regularized double shuffle relations imply conjecturally all existing algebraic relations among $p$-adic multiple zeta values. Moreover, they are both the simplest, easy to use and well-known standard family of algebraic relations among ($p$-adic) multiple zeta values.
\newline Thus, it makes sense to consider that the question of Deligne and Goncharov is a test for the relevance of an explicit computation. This is for us the second such test, the first was to shed light on Kaneko-Zagier's conjecture on finite multiple zeta values, solved in I-2.
\newline 
\newline We are going to consider in priority the quasi-shuffle relation, and solve the question of Deligne and Goncharov up to the passage between usual and "adjoint" $p$-adic multiple zeta values ; but we are also going to consider other relations.

\subsection{}

One of the main objects studied in II-1 were adjoint ($p$-adic) cyclotomic multiple zeta values, namely,
$$ \zeta_{p,\alpha}^{\Ad}\big(
\begin{array}{c} z_{j_{d}},\ldots,z_{j_{1}} \\ s_{d},\ldots,s_{1} \end{array} \big) = (-1)^{d}(\Phi_{p,\alpha}^{-1}e_{1}\Phi_{p,\alpha})[e_{0}^{s_{d}-1}e_{z_{j_{d}}} \ldots e_{0}^{s_{1}-1}e_{z_{j_{1}}}] $$
\noindent Let us explain how we are going to use them here.
\newline 
\newline In part I, we have computed $\zeta_{p,\alpha}$ by computing actually $\zeta_{p,\alpha}^{\Ad}$, and using the relation between $\zeta_{p,\alpha}$ and $\zeta_{p,\alpha}^{\Ad}$. Indeed, in the $p$-adic setting, unlike in the complex setting, adjoint multiple zeta values are as natural as multiple zeta values : adjoint $p$-adic multiple zeta values are directly related to the formulas of the Frobenius, almost more than the usual $p$-adic multiple zeta values.
\newline 
\newline Our principle for using adjoint multiple zeta values is the following : if we want to solve any question on $\zeta_{p,\alpha}$ via explicit formulas, we try to formulate an analogous question for $\zeta^{\Ad}_{p,\alpha}$, solve it for $\zeta^{\Ad}_{p,\alpha}$ and, finally, use the relation between $\zeta_{p,\alpha}$ and $\zeta^{\Ad}_{p,\alpha}$ to conclude.

\subsection{}

We are going to adopt two different point of views, and they will give us two types of results.
\newline 
\newline The first point of view relies on equation (\ref{eq:har DR RT}), and the freeness of the harmonic Ihara action, which characterize implicitly $\zeta^{\Ad}_{p,\alpha}$ in terms of multiple harmonic sums ; by studying moreover the orbit $\Orb_{\har_{\mathbb{N}}}$ of $\har_{\mathbb{N}}^{(p^{\alpha})}$ under $\circ_{\har}^{\DR,\RT}$, we will retrieve properties of $\zeta^{\Ad}_{p,\alpha}$ from properties of $\har_{\mathbb{N}}^{(p^{\alpha})}$ and $\har_{p^{\alpha}\mathbb{N}}$.
In short, in this point of view, we keep track of the fact that the Frobenius is an automorphism of the $\pi_{1}^{\un}$, and we translate in a concrete way the fact that the Frobenius is the image by a certain morphism of a point of a motivic Galois group. However, this point of view does not use explicit formulas for $\zeta^{\Ad}_{p,\alpha}$ strictly speaking, but only a certain characterization of $\zeta_{p,\alpha}$. We will obtain :
\newline 
\newline \textbf{Theorem II-2.a} Let $g \in \Ad_{\tilde{\Pi}_{1,0}(K)_{\Sigma}}(e_{1})$ and $f \in \Orb_{\har_{\mathbb{N}}}$. If $f$ and $g \circ^{\DR\text{-}\RT}_{\har} f$ satisfy the quasi-shuffle relation, then $g$ satisfies the adjoint quasi-shuffle relation from II-1.
\newline 
\newline We will also prove a converse to this implication, and some similar results concerning other families of algebraic relations. 
\newline 
\newline The second point of view relies on the explicit formula (\ref{eq:series expansion}) for $\zeta_{p,\alpha}^{\Ad}$ in terms of $\har_{p^{\alpha}}$, which was obtained in I-2 by bringing together the formulas (\ref{eq:har DR RT}) and (\ref{eq:har RT RT}). Whereas in II-1 we have transferred algebraic relations from $\Ad_{\Phi_{p,\alpha}}(e_{1})$ to $\har_{p^{\alpha}}$ along the map $\Sigma_{\inv}^{\DR}$, using equation  (\ref{eq:series expansion}), here, we are going to transfer algebraic relations in the converse way, along the map $\Sigma^{\RT}$. We will obtain :
\newline 
\newline \textbf{Theorem II-2.b} The map $\Sigma^{\RT}$ sends solutions of the (prime harmonic) quasi-shuffle relation to solutions of the adjoint quasi-shuffle relation.
\newline 
\newline We will also strengthen the view of sequences of $\har_{p^{\alpha}}$ as periods, developed in II-1 and implicit in the second point of view above, by proving that, when $N=1$ or $N$ is a prime number ($N=p'\not=p$) there exists a torsor $\text{Orb}_{\har_{q^{\mathbb{N}}}}$ under $\circ_{\har}^{\DR,\RT}$ containing the sequences prime weighted multiple harmonic sums of the form $\har_{q^{\tilde{\alpha}}}(w)$.
\newline 
\newline In order to consider the integral shuffle equation, we will define a notion of "multiple harmonic sums with reversals" which encodes the coefficients of the power series expansions of products of hyperlogarithms, and we will generalize $\circ_{\har}^{\RT}$ to multiple harmonic sums with reversals.
\newline 
\newline The Theorem II-2.a and Theorem II-2.b are two "harmonic" versions (i.e. versions involving multiple harmonic sums) of the theorem of Besser-Furusho-Jafari \cite{Besser Furusho}, \cite{Furusho Jafari}, that $p$-adic multiple zeta values satisfy the double shuffle relations. The Theorem II-2.b is an answer to the adjoint variant of the question of Deligne and Goncharov ; in a loose sense, this is also true for the Theorem II-2.a (since, strictly speaking, the Theorem II-2.a does not use explicit formulas, but a formula which is very close to explicit formulas).
\newline 
\newline In II-3, we will interpret our results from part I to this II-2 in terms of Galois theory of periods : we will formalize the idea that certain sequences of multiple harmonic sums are like periods, and that harmonic Ihara actions are like motivic Galois actions.

\subsection*{Outline}

We review the harmonic Ihara action and its properties in \S2.
\newline In \S3 and \S4 we extend our computational setting. In \S3 we define the notion of "multiple harmonic sums with reversals", and we generalize some structures of part I, in particular the $\RT$ harmonic Ihara action, to multiple harmonic sums with reversals. In \S4 we study a torsor for the harmonic Ihara actions from part I, and, when $N=1$ or $N$ is prime, we build and study another torsor which is adapted to prime weighted multiple harmonic sums of the form $\har_{q^{\tilde{\alpha}}}(w)$.
\newline The main results are proved in \S5 and \S6 : in \S5 by the properties of the harmonic torsors, and in \S6 by the comparison map $\Sigma^{\RT}$.
\newline In \S7 we prove independently some partial converses to the transfer of algebraic relations from $\zeta_{p,\alpha}$ to $\har_{p^{\alpha}}$ along the maps $\Ad(e_{1})$ and $\Sigma_{\inv}^{\DR}$ established in II-1. In \S8, we apply the problematic of reading explicitly algebraic relations to the overconvergent $p$-adic hyperlogarithms $\Li_{p,\alpha}^{\dagger}$.

\subsection*{Acknowledgments}

This work has been achieved at Universit\'{e} Paris Diderot and at Institut de Recherche Math\'{e}matique Avanc\'{e}e, Strasbourg.
It has been funded by the ERC grant n$^{o}$257638 and by the Labex IRMIA. I thank Benjamin Enriquez and Pierre Cartier for their support, and I also thank Ivan Marin for a suggestion in February of 2015.

\section{Review of the harmonic Ihara actions $\circ_{\har}$ and the comparison maps $\Sigma$ \label{Ihara}}

\numberwithin{equation}{subsection}

We review the harmonic harmonic Ihara actions defined in part I. In all this paragraph, $A$ is any complete topological $K$-algebra.

\subsection{Preliminaries on $\pi_{1}^{\un,\crys}(X_{0})$}

\subsubsection{Introduction : $\pi_{1}^{\un,\crys}(X_{0})$ viewed as $\pi_{1}^{\un,\DR}(X_{K})$ equipped with the Frobenius}

Let $X_{0} = \mathbb{P}^{1} - (\{0,\infty\} \cup \mu_{N}) \text{ }/\text{ }\mathbb{F}_{q}$, let $K=\Frac(W(\mathbb{F}_{q}))$, and $X_{K} = \mathbb{P}^{1} - (\{0,\infty\} \cup \mu_{N}) \text{ }/\text{ }K$ ; following \cite{Deligne}, \S11 and \S13, $\pi_{1}^{\un,\crys}(X_{0})$ amounts to the data of $\pi_{1}^{\un,\DR}(X_{K})$ and the Frobenius on it, where the Frobenius is an isomorphism of groupoids with connection, between $\pi_{1}^{\un,\DR}(X_{K})$ equipped with its canonical connection $\nabla_{\KZ}$ and a variant of it defined over $X_{K}^{(p)}$, where $X_{K}^{(p)} =X^{(p)} \times_{\Spec(W(k))} \Spec(K)$ and $X^{(p)}$ is the base change of $X$ by the Frobenius automorphism of $W(k)$. We will fix $\alpha \in \mathbb{N}^{\ast}$ and will consider the Frobenius of $\pi_{1}^{\un,\DR}(X_{K})$ iterated $\alpha$ times. For more details on what follows, see \S2 of II-1.

\subsubsection{$\pi_{1}^{\un,\DR}(X_{K})$}

The pro-unipotent De Rham fundamental groupoid of $X_{K}$ is a groupoid of pro-affine schemes which admits as base points the points of $X_{K}$, the non-zero tangent vectors $\vec{v}_{z}$ at $z$ to $\mathbb{P}^{1}$, and a canonical base point $\omega_{\DR}$ (the "global section" functor of the underlying Tannakian category of bundles) since $H^{1}(\overline{X_{K}},\mathcal{O}_{\overline{X_{K}}})=0$ (\cite{Deligne}, \S12).

\begin{Proposition}
i) For all points $x,y$, the scheme $\pi_{1}^{\un,\DR}(X_{K},x,y)$, has a canonical point ${}_x 1_{y}$, such that these points are compatible with the groupoid structure (for all $x,y,z$,
\newline $({}_x 1_{y}).({}_y 1_{z}) = ({}_x 1_{z})$) and induce canonical isomorphisms of schemes compatible with the groupoid structure,
$$ \pi_{1}^{\un,\DR}(X_{K},x,y) \simeq \pi_{1}^{\un,\DR}(X_{K},\omega_{\DR}) $$
\noindent ii) The pro-unipotent affine group scheme $\pi_{1}^{\un,\DR}(X_{K},\omega_{\DR})$ over $\mathbb{Q}$ is $\Spec(\mathcal{O}^{\sh,e_{Z}})$, where $\mathcal{O}^{\sh,e_{Z}}$ is the shuffle Hopf algebra over the alphabet $e_{Z} = \{e_{0},e_{z_{1}},\ldots,e_{z_{N}}\}$.
\end{Proposition}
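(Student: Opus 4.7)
The plan is to follow Deligne's approach in \cite{Deligne}, \S12, and more specifically the abstract Tannakian setup that the author has already invoked in the introduction. Throughout, let $\mathcal{C}$ denote the Tannakian category of unipotent flat vector bundles on $X_{K}$ with regular singularities at the punctures $\{0,\infty\} \cup \mu_{N}$ and their canonical extensions as log-connections on $\overline{X_{K}} = \mathbb{P}^{1}_{K}$; then $\pi_{1}^{\un,\DR}(X_{K},x,y) = \text{Isom}^{\otimes}(\omega_{y},\omega_{x})$ where $\omega_{x} : (V,\nabla) \mapsto V_{x}$ is the usual fiber functor (with the standard modification at tangential base points $\vec{v}_{z}$).

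For part (i), the key input is that $H^{1}(\overline{X_{K}},\mathcal{O}_{\overline{X_{K}}}) = 0$, a standard consequence of $\overline{X_{K}} = \mathbb{P}^{1}$ having genus $0$. By Deligne's vanishing argument, this implies that for each $(V,\nabla) \in \mathcal{C}$ with canonical log-extension $(\overline{V},\overline{\nabla})$, one has $H^{1}(\overline{X_{K}},\overline{V}) = 0$ and the global section functor $\omega_{\DR} : (V,\nabla) \mapsto \Gamma(\overline{X_{K}},\overline{V})$ is exact and fiber-functorial. For each base point $x$, the evaluation map $s \mapsto s(x)$ gives a natural isomorphism $\omega_{\DR} \simlra \omega_{x}$, and thus canonical isomorphisms $\omega_{y} \simlra \omega_{\DR} \simlra \omega_{x}$ for any pair $(x,y)$. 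I would define ${}_{x}1_{y}$ to be the resulting element of $\pi_{1}^{\un,\DR}(X_{K},x,y)(K)$; compatibility with composition is then tautological, since the definition factors through a single intermediate functor $\omega_{\DR}$. The induced isomorphism $\pi_{1}^{\un,\DR}(X_{K},x,y) \simeq \pi_{1}^{\un,\DR}(X_{K},\omega_{\DR})$ is obtained by post-composition by ${}_{x}1_{y}^{-1}$.

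For part (ii), I would argue that the pro-unipotent affine group scheme $\pi_{1}^{\un,\DR}(X_{K},\omega_{\DR})$ is determined by its Lie algebra, and reduce to computing the associated graded. Since $X_{K}$ is the complement of a finite set in $\mathbb{P}^{1}$, one has $H^{1}_{\DR}(X_{K}) = \bigoplus_{i=0}^{N} K\,\omega_{z_{i}}$ (with basis indexed by the finite punctures, after eliminating the residue relation via the point at infinity), and by the standard computation for genus-zero open curves (e.g. Chen's iterated-integrals description, or equivalently Deligne-Hain's motivic description) the Lie algebra of $\pi_{1}^{\un,\DR}(X_{K},\omega_{\DR})$ is the free pro-nilpotent Lie algebra over $K$ on the dual basis $e_{Z} = \{e_{0},e_{z_{1}},\ldots,e_{z_{N}}\}$. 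The completed enveloping algebra is then $K\langle\langle e_{Z}\rangle\rangle$ equipped with the shuffle comultiplication in its graded dual; taking $\Spec$ of this graded dual yields the shuffle Hopf algebra $\mathcal{O}^{\sh,e_{Z}}$ over $\mathbb{Q}$.

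The main obstacle is ensuring that the canonical extension of flat connections to log connections on $\overline{X_{K}}$ behaves well at tangential base points $\vec{v}_{z}$, so that the fiber functor $\omega_{\vec{v}_{z}}$ is well-defined and still canonically isomorphic to $\omega_{\DR}$. This is treated in detail by Deligne by passing to the residue/limit fiber at the exceptional divisor of the blow-up at $z$, and once this is in place the two parts of the proposition follow formally from Tannakian duality applied to the previous two paragraphs.
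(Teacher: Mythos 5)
The paper gives no proof of this Proposition: it is stated as a recalled background fact, justified only by the citation of \cite{Deligne}, \S 12 and the remark that $H^{1}(\overline{X_{K}},\mathcal{O}_{\overline{X_{K}}})=0$. Your sketch correctly reconstructs exactly that standard argument (global-sections fiber functor $\omega_{\DR}$ from the vanishing of $H^{1}$, canonical paths ${}_{x}1_{y}$ by composing the evaluation isomorphisms, and freeness of the Lie algebra on the classes of $\frac{dz}{z},\frac{dz}{z-z_{1}},\ldots,\frac{dz}{z-z_{N}}$ giving the shuffle Hopf algebra), so it is correct and takes essentially the same route the paper points to.
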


\noindent We work via these isomorphisms and we make all the computations using $\pi_{1}^{\un,\DR}(X_{K},\omega_{\DR})$. The group $\pi_{1}^{\un,\DR}(X_{K},\omega_{\DR})(A)$ (here, $A$ can be any ring) is included functorially in the non-commutative algebra $A \langle\langle e_{Z}\rangle\rangle$ of formal power series over the variables equal to the letters of the alphabet $e_{Z}$ : the elements of $\pi_{1}^{\un,\DR}(X_{K},\omega_{\DR})(A)$ are those satisfying the "shuffle equation".
\newline 
We will use the notation $\Pi_{z,0} = \pi_{1}^{\un,\DR}(X_{K},\vec{1}_{z},\vec{1}_{0})$, for $z \in \mu_{N}(K)$.

\subsubsection{The Frobenius of $\pi_{1}^{\un,\DR}(X_{K})$}

The Frobenius iterated $\alpha \in \mathbb{N}^{\ast}$ times is characterized (see \S2 of I-1 for more details) by the couple $(\Phi_{p,\alpha},\Li_{p,\alpha}^{\dagger})$ where $\Phi_{p,\alpha}$ is the generating series of cyclotomic $p$-adic multiple zeta values and where $\Li_{p,\alpha}^{\dagger}$, reviewed in \S8, is the generating series of overconvergent $p$-adic hyperlogarithms.
\newline 
\newline We will use the Frobenius mostly through $\Phi_{p,\alpha}$, which expresses the Frobenius on the schemes $\Pi_{z,0}$, $z \in \mu_{N}(K)$. The Frobenius on $\Pi_{z,0}$ is an automorphism of the scheme $\Pi_{z,0} \times_{\Spec(\mathbb{Z})} \Spec(K)$, related to the Ihara product $\circ^{\DR} : \Pi_{z,0} \times \Pi_{z,0} \rightarrow \Pi_{z,0}$ defined as :
$$ g \circ^{\DR} f = g_{z}(e_{0},e_{z_{1}},\ldots,e_{z_{N}}).f_{z}(e_{0},g_{z_{1}}^{-1}e_{z_{1}}g_{z_{1}},\ldots,g_{z_{N}}^{-1}e_{z_{N}}g_{z_{N}}) $$
\noindent where $g_{z_{i}} = (x \mapsto z_{i}x)_{\ast}(g_{z_{N}})$ for all $i \in \{1,\ldots,N\}$.
\newline 
\newline For $\lambda \in A$, let $\tau(\lambda) : A\langle \langle e_{Z}\rangle\rangle$ be the map which multiplies the coefficient of any word $w$ by $\lambda^{\weight(w)}$.
\newline The Frobenius $\phi$ of $\Pi_{1,0}$, iterated $\alpha \in \mathbb{N}^{\ast}$ times, is characterized by 
$$ \tau(p^{\alpha}) \circ \phi^{\alpha} : f \mapsto \Phi_{p,\alpha} \circ^{\DR} f $$
\noindent where $\Phi_{p,\alpha} \in \Pi_{1,0}(K)$ is the generating series of cyclotomic $p$-adic multiple zeta values in the sense of the introduction (\S1.2).

\subsection{The harmonic Ihara action and the comparison maps}

The weight of a word over $e_{Z}$ is its number of letters, and its depth is its number of letters which are distinct from $e_{0}$.

\subsubsection{Generalities}

The following objects were defined in I-2 and I-3. 
Let $A$ be a complete topological $K$-algebra. We consider the topology on $A \langle \langle e_{Z}\rangle\rangle$ induced by the topology of convergence of functions $\{\text{words on }e_{Z}\} \rightarrow A$ which is uniform on each $\{\text{words of depth d}\}$. Let, for $z \in \mu_{N}(K)$,
$$ \tilde{\Pi}_{z,0}(A) = \{f \in \Pi_{z,0}(A)\text{ | } f[e_{z}] = f[e_{0}] = 0 \} $$
\noindent
$$ \Pi_{z,0}(A)_{\Sigma} = \big\{ f \in \Pi_{z,0}(A)\text{ | for all d }\in \mathbb{N}^{\ast}, \underset{s \rightarrow \infty}{\limsup} \{ |f[w]|_{A} \text{ | w word of weight s and depth d}\} = 0 \big\} $$
$$ \tilde{\Pi}_{z,0}(A)_{\Sigma} = \tilde{\Pi}_{z,0}(A) \cap \Pi_{z,0}(A)_{\Sigma} $$

\noindent Finally, $\tilde{\tilde{\Pi}}_{1,0}^{\RT}(A)_{\Sigma}$ is a certain subgroup of $\tilde{\Pi}_{1,0}(A)_{\Sigma}$ (defined in I-2). The harmonic Ihara actions are maps :

\begin{equation} \circ_{\har}^{\DR} : \Ad_{\tilde{\Pi}_{1,0}(A)_{\Sigma}}(e_{1}) \times 
\Map(\mathbb{N},A\langle\langle e_{Z}\rangle\rangle_{\har}^{\DR})
\rightarrow 
\Map(\mathbb{N},A\langle\langle e_{Z}\rangle\rangle_{\har}^{\DR})
\end{equation}

\begin{equation} \circ_{\har}^{\DR\text{-}\RT} : \Ad_{\tilde{\Pi}_{1,0}(A)_{\Sigma}}(e_{1}) \times 
\Map(\mathbb{N},A\langle\langle e_{Z}\rangle\rangle_{\har}^{\DR\text{-}\RT})
\rightarrow 
\Map(\mathbb{N},A\langle\langle e_{Z}\rangle\rangle_{\har}^{\DR\text{-}\RT})
\end{equation}

\begin{equation} \circ_{\har}^{\RT} : \Ad_{\tilde{\tilde{\Pi}}_{1,0}^{\RT}(A)_{\Sigma}}(e_{1}) \times 
\Map(\mathbb{N},A\langle\langle e_{Z}\rangle\rangle_{\har}^{\RT}) \rightarrow
\Map(\mathbb{N},A\langle\langle e_{Z}\rangle\rangle_{\har}^{\RT})
\end{equation}

\noindent We will not use $\circ_{\har}^{\DR}$ in the version of this text (see I-3 for its definition). The definition of $\circ_{\har}^{\RT}$ (see I-2, \S4-\S5 for details) is the lift of an elementary computation on multiple harmonic sums giving a formula for numbers $\har_{p^{\alpha}n}(w)$ in terms of the numbers $\har_{p^{\alpha}}(w')$ and $\har_{n}(w'')$, in such a way that the definition implies
$$ \har_{p^{\alpha}\mathbb{N}} = \har_{p^{\alpha}} \circ_{\har}^{\RT} \har_{\mathbb{N}}^{(p^{\alpha})} $$
\noindent Let us review the definition of $\circ_{\har}^{\DR\text{-}\RT}$. Let $A \langle\langle e_{Z} \rangle\rangle^{\lim} \subset A\langle\langle e_{Z} \rangle\rangle$, resp. $A \langle\langle e_{Z} \rangle\rangle^{\DR\text{-}\RT} \subset A\langle\langle e_{Z} \rangle\rangle$, be the vector subspace consisting of the elements $f\in A\langle\langle e_{Z} \rangle\rangle$ such that, for all words $w$ on $e_{Z}$, the sequence $(f[e_{0}^{l}w])_{l\in \mathbb{N}}$ has a limit in $A$ when $l \rightarrow \infty$, resp. is constant.
We have a map $\lim : A \langle \langle e_{Z} \rangle\rangle^{\lim} \rightarrow A \langle \langle e_{Z} \rangle\rangle^{\DR\text{-}\RT}_{\har}$ defined by, for all words $w$, $(\lim f)[w] = \displaystyle \lim_{l\rightarrow \infty} f[e_{0}^{l}w]$. Let $\circ_{\Ad}^{\DR}$ be the adjoint Ihara product (defined in I-2, \S3) which is characterized by the equation $\Ad_{g'}(e_{1}) \circ_{\Ad}^{\DR} \Ad_{g}(e_{1}) = \Ad_{g' \circ^{\DR} g }(e_{1})$. with $\tau$ as in \S2.1.2, the definition of $\circ^{\DR\text{-}\RT}_{\har}$ is :
\begin{center}
$g \circ_{\har}^{\DR,\RT} (n \mapsto h_{n}) = \big(n \mapsto \lim \big( \tau(n)(g) \circ^{\DR}_{\Ad} h_{n} \big)\big)$ 
\end{center}
\noindent We have defined in I-2 an explicit map $\Sigma^{\RT} : \Ad_{\tilde{\tilde{\Pi}}_{1,0}^{\RT}(A)_{\Sigma}}(e_{1}) \rightarrow \Ad_{\tilde{\Pi}_{1,0}(A)_{\Sigma}}(e_{1})$ which satisfies the following equation, under certain hypothesis on $g$ and $h$ :
$$ \text{ }\Sigma^{\RT}(g) \circ_{\har}^{\DR\text{-}\RT} h
= g \circ_{\har}^{\DR\text{-}\RT} h $$
\noindent and a map $\Sigma_{\inv}^{\DR} : \Ad_{\tilde{\Pi}_{1,0}(A)_{\Sigma}}(e_{1}) \rightarrow A\langle \langle e_{Z}\rangle\rangle$, such that we have proved $\Sigma^{\DR}_{\inv} \circ \Sigma^{\RT} = \id$. The formula for $\Sigma^{\RT}$ is a byproduct of the formula for $\circ_{\har}^{\RT}$. The formula for $\Sigma^{\DR}_{\inv}$ is very simple ; $\Sigma^{\DR}_{\inv}$ was used a lot in II-1.
\noindent 
In I-2, \S5.1 and \S5.2, we defined (here we assume $N=1$ for simplicity) a sequence of coefficients $\mathcal{B}_{b}^{l_{1},\ldots,l_{d}} \in \mathbb{Q}$ ($l_{1},\ldots,l_{d} \in \mathbb{Z}$, $1 \leq l \leq l_{1}+\ldots+l_{d}+d$ which appear in the formula for $\circ^{\RT}$. They are coefficients of the localized multiple harmonic sums $\frak{h}(-l_{d},\ldots,-l_{1}) : n \mapsto \sum_{0<n_{1}<\ldots<n_{d}<n} n_{1}^{l_{1}}\ldots n_{d}^{l_{d}}$. If $l_{1},\ldots,l_{d}\geq 0$, then the map $\frak{h}(-l_{d},\ldots,-l_{1})$ is polynomial and we define the coefficients $\mathcal{B}$ by the equality $\frak{h}_{n}(-l_{d},\ldots,-l_{1}) = \sum_{b=1}^{l_{1}+\ldots+l_{d}+d} \mathcal{B}_{l}^{l_{1},\ldots,l_{d}} n^{b}$ for all $n \in \mathbb{N}^{\ast}$. In the general case, $\frak{h}(-l_{d},\ldots,-l_{1})$ is a $\mathbb{Q}$-linear combination of products of polynomial functions by usual multiple harmonic sums, and we define the coefficients  $\mathcal{B}_{l}^{l_{1},\ldots,l_{d}}$ as coefficients of the purely polynomial term, i.e. the coefficients of the trivial multiple harmonic sum equal to $1$.

\subsubsection{Examples in low depth}

We review, in the case of $\mathbb{P}^{1} - \{0,1,\infty\}$ (i.e. $N=1$) and in depth one and two, the formulas for $\circ_{\har}^{\DR\text{-}\RT}$, $\circ_{\har}^{\RT}$ and $\Sigma^{\RT}$.

\begin{Examples} \label{example of RT RT har} Let $h = (h_{n})_{n\in\mathbb{N}} \in \Map(\mathbb{N},\mathbb{Q}_{p}\langle \langle e_{0},e_{1}\rangle\rangle^{\DR\text{-}\RT}_{\har})$, and $g \in \tilde{\Pi}_{1,0}(\mathbb{Q}_{p})_{\Sigma}$.
\newline i) $d=1$ : for all $s_{1} \in \mathbb{N}^{\ast}$,
\begin{equation}
(\Ad_{g}(e_{1}) \circ^{\DR,\RT}_{\har} h )(s_{1}) = \big( h_{n}(s_{1}) + \sum_{b \in \mathbb{N}}n^{s_{1}+b} \Ad_{g}(e_{1})[e_{0}^{b}e_{1} e_{0}^{s_{1}-1}e_{1}] \big)_{n\in\mathbb{N}}
\end{equation}
\noindent ii) $d=2$ : for all $s_{1},s_{2} \in \mathbb{N}^{\ast}$,
\begin{multline}
(\Ad_{g}(e_{1}) \circ_{\har}^{\DR,\RT} h )(s_{2},s_{1}) = \bigg( h_{n}(s_{2},s_{1}) + \sum_{b\in \mathbb{N}} n^{b+s_{2}+s_{1}}\Ad_{g}(e_{1})[e_{0}^{b}e_{1}e_{0}^{s_{2}-1}e_{1} e_{0}^{s_{1}-1}e_{1}] 
\\ + \sum_{r_{2}=0}^{s_{2}-1} h_{n}(s_{2}-r_{2}) n^{r_{2}+s_{1}}\Ad_{g}(e_{1})[e_{0}^{r_{2}}e_{1} e_{0}^{s_{1}-1}e_{1}] 
+ \sum_{r_{1}=0}^{s_{1}-1} h_{n}(s_{1}-r_{1}) \sum_{b\in \mathbb{N}} n^{b+s_{2}+r_{1}}\Ad_{g}(e_{1})[e_{0}^{b}e_{1}e_{0}^{s_{2}-1}e_{1} e_{0}^{r_{1}}] \bigg)_{n\in\mathbb{N}}
\end{multline}
\end{Examples}

\noindent Here we use the coefficients $\mathcal{B}$ whose definition is reviewed in \S2.2.1.

\begin{Examples} \label{example of RT RT har} Let $h = (h_{n})_{n\in\mathbb{N}} \in \Map(\mathbb{N},\mathbb{Q}_{p}\langle\langle e_{0},e_{1}\rangle\rangle^{\RT}_{\har})$, and $g \in \Ad_{\tilde{\tilde{\Pi}}_{1,0}(\mathbb{Q}_{p})_{\Sigma}}(e_{1})$.
\begin{equation}
(g \circ^{\RT}_{\har} h )_{n}(s) = h_{n}(s) + \sum_{b \geq 1} n^{b+s} \sum_{l \geq b-1} {-s \choose l} \mathcal{B}_{b}^{l} \text{ }g(s+l)
\end{equation}
\begin{multline} \label{eq: har RT RT in depth 2}
(g \circ^{\RT}_{\har} h )_{n}(s_{2},s_{1}) = 
h_{n}(s_{2},s_{1}) +
\\ \sum_{t \geq 1} n^{s_{2}+s_{1}+t} 
\sum_{l \geq t-1} \bigg[ {-s_{1} \choose l+s_{2}} \mathcal{B}_{t}^{l+s_{2},-s_{2}} - {-s_{2} \choose l+s_{1}} \mathcal{B}_{t}^{l+s_{1},-s_{1}} \bigg] g(s_{1}+s_{2}+t)  + \sum_{t \geq 1} n^{s_{1}+s_{2}+t}
	\bigg[
	\\ \sum_{\substack{l_{1},l_{2} \geq 0 \\ l_{1}+l_{2} \geq t-2}} 
	\mathcal{B}_{t}^{l_{2},l_{1}} 
	\prod_{i=1}^{2} {-s_{i} \choose l_{i}} g(s_{i}+l_{i}) +  \sum_{\substack{l_{1},l_{2} \geq 0 \\ l_{1}+l_{2} \geq t-1}}
	\mathcal{B}_{t}^{l_{1}+l_{2}} 
	\bigg( \prod_{i=1}^{2} {-s_{i} \choose l_{i}} \bigg) g(s_{2}+l_{2},s_{1}+l_{1})
	\bigg] 
	\\ - n^{s_{2}+s_{1}} \bigg[ \sum_{l_{1} \geq s_{2}-1} \mathcal{B}_{s_{2}}^{l_{1}} {-s_{1} \choose l_{1}} g(s_{1}+l_{1}) 
	- \sum_{l_{2} \geq s_{1}-1} \mathcal{B}_{s_{1}}^{l_{2}} {-s_{2} \choose l_{2}} g(s_{2}+l_{2})  \bigg] +
	\\ \sum_{\substack{ 1 \leq t < s_{2} \\ l \geq t-1}}
	n^{s_{1}+t} h_{n}(s_{2}-t) \mathcal{B}_{t}^{l} {-s_{1} \choose l} g(s_{1}+l) - \sum_{\substack{1 \leq t < s_{1} \\ l' \geq t-1}} 
	n^{s_{2}+t} h_{n}(s_{1}-t) \mathcal{B}_{t}^{l'} {-s_{2} \choose l'} g(s_{2}+l')
	\end{multline}
\end{Examples}

\begin{Examples} \label{example of Sigma RT} We have, for all $h = (h_{n})_{n\in\mathbb{N}} \in \Map(\mathbb{N},A\langle\langle e_{Z}\rangle\rangle_{\har}^{\RT})$ :
\begin{equation} \Sigma^{\RT}(h)[e_{0}^{m}e_{1}e_{0}^{s-1}e_{1}] = \sum_{l \geq m-1} {-s \choose l} \mathcal{B}_{m}^{l} h(s+l) 
\end{equation}
\begin{multline} \Sigma^{\RT}(h)[e_{0}^{m}e_{1}e_{0}^{s_{2}-1}e_{1}e_{0}^{s_{1}-1}e_{1}] = 
\sum_{\substack{l_{1},l_{2} \in \mathbb{N} \\ l_{1}+l_{2} \geq m-1 }} 
\bigg( \mathcal{B}_{m}^{l_{1}+l_{2}} h(s_{2}+l_{2},s_{1}+l_{1}) + \mathcal{B}_{m}^{l_{2},l_{1}} \prod_{i=1}^{2} h(s_{i}+l_{i}) \bigg) 
\\ + \sum_{\substack{l \geq 0 \\ l \geq m-1}} h(s_{1}+s_{2}+l) 
\bigg( {-s_{1} \choose s_{2}+l} \mathcal{B}_{m}^{l+s_{2},-s_{2}} - {-s_{2} \choose s_{1}+l} \mathcal{B}_{m}^{l+s_{1},-s_{1}} \bigg) 
\end{multline}
\end{Examples}

\section{Multiple harmonic sums with reversals and harmonic Ihara actions with reversals \label{reversals}}

Let $e_{Z}$ be the alphabet $\{e_{0},e_{z_{1}},\ldots,e_{z_{N}}\}$ which represents the sequence of differential forms $\frac{dz}{z}$, $\frac{dz}{z-z_{1}},\ldots,\frac{dz}{z-z_{N}}$. We define a notion of "words with reversals on $e_{Z}$", which extends the notion of words on $e_{Z}$ ; it indexes the generalization of multiple harmonic sums arising as the coefficients of the power series expansions of products of hyperlogarithms. We show that the notions of harmonic Ihara actions and their properties have extensions to the multiple harmonic sums with reversals.

\subsection{Words with reversals, harmonic words with reversals and multiple harmonic sums with reversals}

\subsubsection{Words with reversals}

\begin{Definition} Let the alphabet $e_{Z}^{wr}$ be the set of letters 
$\{e_{0},e_{z_{1}},\ldots,e_{z_{N}},e_{0}^{(\text{rev})}, e_{z_{1}}^{(\text{rev})},\ldots,e_{z_{N}}^{(\text{rev})}\}$.
\newline A word with reversal over $e_{Z}$ is the image of a word over $e_{Z}^{wr}$ in the quotient of $\mathbb{Q}\langle e_{Z}^{wr} \rangle$ by the ideal  $(e_{0}e_{0}^{(\rev)} - e_{0}^{(\rev)} e_{0})$. Let $\mathcal{W}(e_{Z})^{wr}$ be the set of words with reversals over $e_{Z}$.
\end{Definition}

\noindent The words with reversals over $e_{Z}$ can be represented as 
$$ e_{0}^{(s_{d}-1)+(s'_{d}-1)^{(\rev)}}e_{z_{j_{d}}}^{(x_{d})} \ldots e_{0}^{(s_{1}-1)+(s'_{1}-1)^{(\rev)}}e_{z_{j_{1}}}^{(x_{d})} e_{0}^{(s_{0}-1)+(s'_{0}-1)^{(\rev)}} $$
\noindent where $x_{d},\ldots,x_{1} \in \{\emptyset,\rev\}$, $s_{d},s'_{d},\ldots,s_{0},s_{0}' \in \mathbb{N}^{\ast}$.

\begin{Definition}
Let $K \langle \langle e_{Z}\rangle\rangle_{wr}$ be the $K$-algebra of linear forms on the $K$-vector space generated by words with reversals, viewed as formal power series, equipped with the multiplication of formal power series.
\end{Definition}

\noindent There is a bijection between the set words over $e_{Z}$ whose furthest to the right letter is not $e_{0}$ and the set of words over
$Y_{Z} = \{y_{s}^{(z_{i})} \text{ | } s \in \mathbb{N}^{\ast}, i \in \{1,\ldots,N\} \}$,
defined by $ e_{0}^{s_{d}-1}e_{z_{i_{d}}} \ldots e_{0}^{s_{d}-1}e_{z_{i_{1}}} \leftrightarrow y_{s_{d}}^{(z_{i_{d}})} \ldots y_{s_{1}}^{(z_{i_{1}})}$. It can be extended to words with reversals :

\begin{Definition} Let $Y_{Z}^{wr}$ be the alphabet of the letters 
	$\{ y_{s+s'^{(\rev)}}^{(z_{i})} \text{ | } (s,s') \in \mathbb{N}^{2} - \{(0,0)\},\text{ } i \in \{1,\ldots,N\}\}$.
\newline Let $\mathcal{W}(Y_{Z}^{wr})$ be the set of words over the alphabet $Y_{Z}^{wr}$.
\end{Definition}

\noindent Then one has a surjection from the set of words $e_{Z}$ whose furthest to the right letter is not $e_{0}$ nor $e_{0}^{(\rev)}$ to $\mathcal{W}(Y_{Z}^{wr})$, defined by $e_{0}^{s_{d}-1)+(s'_{d}-1)^{(\rev)}}e_{z_{j_{d}}}^{(x_{d})} \ldots e_{0}^{(s_{1}-1)+(s'_{1}-1)^{(\rev)}}e_{z_{j_{1}}}^{(x_{d})} \mapsto$
\newline  $y_{(s_{d}-1+\epsilon_{d})+(s'_{d}-1+\epsilon'_{d})^{(\rev)}}^{(z_{j_{d}})}\ldots y_{(s_{1}-1+\epsilon_{1})+(s_{1}-1+\epsilon'_{1})}^{(z_{j_{1}})}$ with, for all $i \in \{1,\ldots,d\}$, $(\epsilon_{i},\epsilon'_{i}) = (1,0)$ if $x_{i}=\emptyset$, and $(\epsilon_{i},\epsilon'_{i}) = (0,1)$ if $x_{i}= \rev$.
We will use often the notation 
$$ y_{s_{d}+{s'_{d}}^{(\rev)}}^{(z_{j_{d}})}\ldots y_{s_{1}+{s'_{1}}^{(\rev)}}^{(z_{j_{1}})}  \leftrightarrow \bigg( \begin{array}{cc}
z_{j_{d}},\ldots,z_{j_{1}}
\\ s_{d}+{s'_{d}}^{(\rev)},\ldots,s_{1}+{s'_{1}}^{(\rev)} \end{array} \bigg) $$

\subsubsection{Harmonic words with reversals and multiple harmonic sums with reversals}

Let us extend the notion of "harmonic words" (indices of multiple harmonic sums) to words with reversals :

\begin{Definition}
We call harmonic word with reversals over $e_{Z}$ a sequence of the form
$\bigg( \begin{array}{cc}
z_{j_{d+1}},z_{j_{d}},\ldots,z_{j_{1}}
\\ s_{d}+s_{d}'^{(\text{rev})},\ldots,s_{1}+s_{1}'^{(\text{rev})} \end{array} \bigg)$ with $s_{d},s_{d}',\ldots,s_{1},s_{1}' \in \mathbb{N}^{\ast}$ and $j_{d+1},\ldots,j_{1} \in \{1,\ldots,N\}$.
\newline We say that $\bigg( \begin{array}{cc}
z_{j_{d+1}},z_{j_{d}},\ldots,z_{j_{1}}
\\ s_{d}+s_{d}'^{(\text{rev})},\ldots,s_{1}+s_{1}'^{(\text{rev})} \end{array} \bigg)$ is of weight $\sum_{i=1}^{d} s_{i}+\sum_{i=1}^{d}s'_{i}$ and of depth $d$.
\newline Let $\mathcal{W}(e_{Z})_{\har,wr}$ be the set of harmonic words with reversals.
\end{Definition}

\begin{Definition} We call multiple harmonic sums with reversals the following numbers
$$ \frak{h}_{n} \bigg( \begin{array}{cc}
z_{j_{d+1}},z_{j_{d}},\ldots,z_{j_{1}}
\\ s_{d}+{s'_{d}}^{(\text{rev})},\ldots,s_{1}+{s'_{1}}^{(\text{rev})} \end{array} \bigg) = \sum_{0<n_{1}<\ldots<n_{d}<n} 
\frac{\big( \frac{z_{j_{2}}}{z_{j_{1}}} \big)^{n_{1}} \ldots \big(\frac{z_{j_{d+1}}}{z_{j_{d}}}\big)^{n_{d}} \big( \frac{1}{z_{j_{d+1}}}\big)^{n}}{n_{1}^{s_{1}}(n-n_{1})^{s_{1}'}\ldots n_{d}^{s_{d}}(n-n_{d})^{s_{d}'}} $$
\noindent We also call weighted multiple harmonic sums with reversals the numbers 
$\har_{n}(w) = n^{\weight(w)} \frak{h}_{n}(w)$, and prime weighted multiple harmonic sums with reversals the numbers $\har_{p^{\alpha}}(w)$, $\alpha \in \mathbb{N}^{\ast}$.
\end{Definition}

\begin{Definition} Let $K 
\langle\langle e_{Z} \rangle\rangle_{\har,wr}^{\RT}$ be the $K$-algebra of linear forms on the $K$-vector space generated by harmonic words with reversals, viewed as formal power series, equipped with the multiplication of formal power series.
\end{Definition}

\noindent For any $n \in \mathbb{N}^{\ast}$, let $\har_{n}^{wr}= (\har_{n}(w))_{w \in \mathcal{W}(e_{Z})_{\har,wr}}$ and let us view it as an element of $K 
\langle\langle e_{Z} \rangle\rangle_{\har,wr}^{\RT}$.
For any $I \subset \mathbb{N}$, let $\har_{I}^{wr} = (\har_{n}^{wr})_{n \in I}$ and let us view it as an element of $\Map(I,K 
\langle\langle e_{Z} \rangle\rangle_{\har,wr}^{\RT})$.

\subsection{Localization at $0$}

In I-2 we defined and used a notion of "localized words over $e_{Z}$", which was necessary to construct $\circ_{\har}^{\RT}$. We now extend it to a notion of "localized words with reversals over $e_{Z}$".

\subsubsection{Localized words with reversals}

\begin{Definition} A localized word with reversals over $e_{Z}$ is the image a word on the alphabet
$\{e_{0},e_{0}^{-1},e_{z_{1}},$
\newline $\ldots,e_{z_{N}},e_{0}^{(\rev)},(e_{0}^{-1})^{(\rev)},e_{z_{1}}^{(\rev)},\ldots,e_{z_{N}}^{(\rev)}\}$ in the localization of the integral non-commutative ring
\newline 
$\mathbb{Q} \langle e_{0},e_{z_{1}},\ldots,e_{z_{N}},e_{0}^{(\rev)},e_{z_{1}}^{(\rev)},\ldots,e_{z_{N}}^{(\rev)} \rangle / (e_{0}e_{0}^{(\rev)} - e_{0}^{(\rev)}e_{0})$ (where the multiplication is defined by the concatenation of words) at the multiplicative part generated by $e_{0}$ and $e_{0}^{(\rev)}$. Let $\mathcal{W}(e_{Z})_{wr,\loc}$ be the set of localized words with reversals over $e_{Z}$.
\end{Definition}

\noindent A localized word with reversals can be represented as 
$$ e_{0}^{u_{d}+{u'_{d}}^{(\rev)}}e_{z_{j_{d}}}^{(x_{d})} \ldots e_{0}^{u_{1}+{u'_{1}}^{(\rev)}}e_{z_{j_{1}}}^{(x_{d})} e_{0}^{u_{0}+{u'_{0}}^{(\rev)}} $$ 
\noindent with $u_{d},u'_{d},\ldots,u_{0},u'_{0} \in \mathbb{Z}$. 

\begin{Definition} Let $Y_{Z}^{wr,\loc}$ be the alphabet of the letters 
	$\{ y_{t+t'^{(\rev)}}^{(z_{i})} \text{ | } (t,t') \in \mathbb{Z}^{2} ,\text{ } i \in \{1,\ldots,N\}\}$.
	\newline Let $\mathcal{W}(Y_{Z}^{wr,\loc})$ be the set of words over $Y_{Z}^{wr,\loc}$.
\end{Definition}

\noindent Then one has a surjection from the set of localized words with reversals over $e_{Z}$ whose furthest to the right letter is not $e_{0}$ nor $e_{0}^{(\rev)}$ to the set of words over $Y_{Z}^{wr}$, defined by $e_{0}^{u_{d}+u_{d}^{(\rev)}}e_{z_{j_{d}}}^{(x_{d})} \ldots e_{0}^{u_{1}+u_{1}^{(\rev)}}e_{z_{j_{1}}}^{(x_{d})} \mapsto y_{(u_{d}+\epsilon_{d})+(u'_{d}+\epsilon'_{d})^{(\rev)}}^{(z_{j_{d}})}\ldots y_{(u_{1}+\epsilon_{1})+(u_{1}+\epsilon'_{1})}^{(z_{j_{1}})}$ with, for all $i \in \{1,\ldots,d\}$, $(\epsilon_{i},\epsilon'_{i}) = (1,0)$ if $x_{i}=\emptyset$, and $(\epsilon_{i},\epsilon'_{i}) = (0,1)$ if $x_{i}= \rev$. We will use often the notation
$$ y_{u_{d}+{u'_{d}}^{(\rev)}}^{(z_{j_{d}})}\ldots y_{u_{1}+{u'_{1}}^{(\rev)}}^{(z_{j_{1}})} = \bigg( \begin{array}{cc}
z_{j_{d}},\ldots,z_{j_{1}}
\\ u_{d}+{u'_{d}}^{(\rev)},\ldots,u_{1}+{u'_{1}}^{(\rev)}, \end{array}\bigg) $$

\begin{Definition} Let $K\langle \langle e_{Z}\rangle\rangle_{wr,\loc}$ be the the $K$-algebra of linear forms on the $K$-vector space generated by localized words with reversals on $e_{Z}$, viewed as formal power series, equipped with the multiplication of formal power series. 
\end{Definition}

\subsubsection{Localized harmonic words with reversals and localized multiple harmonic sums with reversals}

\begin{Definition}
We call localized harmonic word with reversals over $e_{Z}$ a sequence of the form
$\bigg( \begin{array}{cc}
z_{j_{d+1}},z_{j_{d}},\ldots,z_{j_{1}}
\\ u_{d}+u_{d}'^{(\text{rev})},\ldots,u_{1}+u_{1}'^{(\text{rev})} \end{array} \bigg)$ with $u_{d},u_{d}',\ldots,u_{1},u_{1}' \in \mathbb{Z}$ and $j_{d+1},\ldots,j_{1} \in \{1,\ldots,N\}$.
\newline We say that $\bigg( \begin{array}{cc}
z_{j_{d+1}},z_{j_{d}},\ldots,z_{j_{1}}
\\ u_{d}+u_{d}'^{(\text{rev})},\ldots,u_{1}+u_{1}'^{(\text{rev})} \end{array} \bigg)$ is of weight $\sum_{i=1}^{d} u_{i} + \sum_{i=1}^{d} u'_{i}$ and of depth $d$.
\newline Let $\mathcal{W}(e_{Z})_{\har,wr,\text{loc}}$ be the set of localized harmonic words with reversals.
\end{Definition}

\begin{Definition} We call localized multiple harmonic sums with reversals the following numbers
	$$ \frak{h}_{n} \bigg( \begin{array}{cc}
	z_{j_{d+1}},z_{j_{d}},\ldots,z_{j_{1}}
	\\ u_{d}+{u'_{d}}^{(\text{rev})},\ldots,u_{1}+{u'_{1}}^{(\text{rev})} \end{array} \bigg) = \sum_{0<n_{1}<\ldots<n_{d}<n} 
	\frac{\big( \frac{z_{j_{2}}}{z_{j_{1}}} \big)^{n_{1}} \ldots \big(\frac{z_{j_{d+1}}}{z_{j_{d}}}\big)^{n_{d}} \big( \frac{1}{z_{j_{d+1}}}\big)^{n}}{n_{1}^{s_{1}}(n-n_{1})^{u_{1}'}\ldots n_{d}^{u_{d}}(n-n_{d})^{u_{d}'}} $$
	\noindent We also call weighted localized multiple harmonic sums with reversals the numbers 
	$\har_{n}(w) = n^{\weight(w)} \frak{h}_{n}(w)$, and prime weighted localized multiple harmonic sums with reversals the numbers $\har_{p^{\alpha}}(w)$, $\alpha \in \mathbb{N}^{\ast}$.
\end{Definition}

\begin{Definition} Let $K 
	\langle\langle e_{Z} \rangle\rangle_{\har,wr,\loc}^{\RT}$ be the $K$-algebra of linear forms on the $K$-vector space generated by localized harmonic words with reversals, viewed as formal power series, equipped with the multiplication of formal power series.
\end{Definition}

\noindent For any $n \in \mathbb{N}^{\ast}$, let $\har_{n}^{wr,\loc} = (\har_{n}(w))_{w \in \mathcal{W}(e_{Z})_{\har,wr,\loc}}$ and let us view it as an element of $K 
\langle\langle e_{Z} \rangle\rangle_{\har,wr,\loc}^{\RT}$.
For any $I \subset \mathbb{N}$, let $\har_{I}^{wr,\loc} = (\har_{n}^{wr,\loc})_{n \in I}$ and let us view it as an element of $\Map(I,K 
\langle\langle e_{Z} \rangle\rangle_{\har,wr,\loc}^{\RT})$.

\subsection{Relation with iterated integrals over $\mathbb{P}^{1} - \{0,\mu_{N},\infty\}$}

We now relate the previous definitions to iterated integrals, this relation being the motivation for those definitions :

\begin{Fact} \label{fact relate reversals}(restatement of Lemma 3.13 from II-1) Let $w_{1} = \big( \begin{array}{cc} z_{j_{d}},\ldots,z_{j_{1}} \\ s_{d},\ldots,s_{1} \end{array} \big)$, $w_{2} = \big( \begin{array}{cc} z_{u_{d'}},\ldots,z_{u_{1}} \\ t_{d'},\ldots,t_{1} \end{array} \big)$ be words over $e_{Z}$, and let $n \in \mathbb{N}^{\ast}$. We have :
	$$ \big(\Li[w_{1}]\Li[w_{2}] \big)[z^{n}] = \har_{n} \bigg( \begin{array}{cc} z_{u_{1}},\ldots, z_{u_{d'}},z_{j_{d}},\ldots,z_{j_{1}} \\ t_{1}^{(\rev)},\ldots,t_{d'-1},t_{d'}^{(\rev)}+s_{d},s_{d-1},\ldots,s_{1} \end{array}  \bigg) $$
\end{Fact}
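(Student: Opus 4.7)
The proof is a direct computation combining the power series expansions of the two hyperlogarithms via the Cauchy product. First I would recall the convergent series expansion of $\Li[w]$ around $z=0$: iterating the geometric series expansion of each form $dz/(z-z_j)$ yields, for a word not ending in $e_{0}$,
$$ \Li[w_{1}](z) = \sum_{0<n_{1}<\ldots<n_{d}} \frac{(z_{j_{2}}/z_{j_{1}})^{n_{1}}\cdots(z_{j_{d}}/z_{j_{d-1}})^{n_{d-1}}(z/z_{j_{d}})^{n_{d}}}{n_{1}^{s_{1}}\cdots n_{d}^{s_{d}}}, $$
and analogously for $\Li[w_{2}]$, so that the coefficient of $z^{m}$ identifies the outermost summation index with $m$.

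Next I would apply the Cauchy product
$$ (\Li[w_{1}]\Li[w_{2}])[z^{n}] = \sum_{m_{1}+m_{2}=n} \Li[w_{1}][z^{m_{1}}] \cdot \Li[w_{2}][z^{m_{2}}] $$
and substitute $m_{2}=n-m_{1}$. The crucial step is then to perform, on the $w_{2}$-side only, the change of variables $\tilde{n}_{i}=n-n'_{i}$ on the $d'-1$ inner summation indices of $w_{2}$. This reverses the order of those indices within the global chain of inequalities and substitutes $(n'_{i})^{t_{i}}$ by $(n-\tilde{n}_{i})^{t_{i}}$ in the denominator, producing exactly the reversal markers $t_{i}^{(\rev)}$. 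After this substitution, the sum runs over the single chain $0<n_{1}<\ldots<n_{d-1}<m_{1}<\tilde{n}_{d'-1}<\ldots<\tilde{n}_{1}<n$ of depth $d+d'-1$; at the junction index $m_{1}$, the factor $m_{1}^{s_{d}}$ coming from $w_{1}$ and the factor $(n-m_{1})^{t_{d'}}$ coming from $w_{2}$ after the substitution $m_{2}=n-m_{1}$ coalesce into the joint entry $s_{d}+t_{d'}^{(\rev)}$.

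A short bookkeeping step then compares the remaining $z$-monomial with the form prescribed by the definition of $\frak{h}_{n}$ with reversals: each ratio $z_{j_{i+1}}/z_{j_{i}}$ (and the corresponding $z_{u_{i+1}}/z_{u_{i}}$) appears raised to the adjacent summation index by telescoping, and a global factor $z_{u_{1}}^{-n}$ emerges as the outermost prefactor $(1/z_{j_{d+d'}})^{n}$. Multiplying by $n^{\weight(w_{1})+\weight(w_{2})}$ converts $\frak{h}_{n}$ to $\har_{n}$ and yields the stated equality.

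The main obstacle is the combinatorial bookkeeping of the change of variables: one must verify that applying $\tilde{n}_{i}=n-n'_{i}$ only to the $w_{2}$-side produces exactly the sequence of reversals $t_{1}^{(\rev)},\ldots,t_{d'-1}^{(\rev)}$ together with the single junction entry $t_{d'}^{(\rev)}+s_{d}$, and that the telescoping of the $z$-ratios is compatible with the prescribed cyclic order $z_{u_{1}},\ldots,z_{u_{d'}},z_{j_{d}},\ldots,z_{j_{1}}$ of the upper row of the resulting harmonic word with reversals. Once the ordering is fixed, the rest of the identification is a purely formal matching of exponents.
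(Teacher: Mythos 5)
The paper does not actually prove this Fact: it is quoted as Lemma 3.13 of II-1 with no argument given here, so there is no internal proof to compare against. Your direct computation is correct and is surely the intended argument. Expanding each hyperlogarithm as an iterated power series, forming the Cauchy product, setting $m_{2}=n-m_{1}$ and then substituting $\tilde{n}_{i}=n-n'_{i}$ on the $d'-1$ inner indices of $w_{2}$ does merge the two chains into the single chain $0<n_{1}<\dots<n_{d-1}<m_{1}<\tilde{n}_{d'-1}<\dots<\tilde{n}_{1}<n$ of depth $d+d'-1$, produces the denominators $(n-\tilde{n}_{i})^{t_{i}}$ which are exactly the reversal markers, and yields the junction entry $s_{d}+t_{d'}^{(\rev)}$ from $m_{1}^{s_{d}}(n-m_{1})^{t_{d'}}$. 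The telescoping $\prod_{i}(z_{u_{i+1}}/z_{u_{i}})^{n}=(z_{u_{d'}}/z_{u_{1}})^{n}$ converts the prefactor $(1/z_{u_{d'}})^{n}$ into $(1/z_{u_{1}})^{n}$ and the remaining ratios $(z_{u_{i}}/z_{u_{i+1}})^{\tilde{n}_{i}}$ give precisely the upper row $z_{u_{1}},\dots,z_{u_{d'}},z_{j_{d}},\dots,z_{j_{1}}$ in the stated order; your bookkeeping also correctly forces all of $t_{1},\dots,t_{d'-1}$ to appear as reversals, which resolves the ambiguity in the printed lower row where the superscript $(\rev)$ is dropped on the middle entries.

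One point to tighten: your closing sentence about ``multiplying by $n^{\weight(w_{1})+\weight(w_{2})}$'' is not coherent as written. The Cauchy product computes the unweighted sum, i.e.\ $\frak{h}_{n}$ of the displayed harmonic word with reversals, and the left-hand side $(\Li[w_{1}]\Li[w_{2}])[z^{n}]$ carries no factor $n^{\weight}$; you cannot rescale only one side of an identity. Either state the result with $\frak{h}_{n}$ on the right, or make explicit that you are using a weighted normalization of the Taylor coefficients of $\Li$ (the discrepancy is arguably inherited from the statement itself as transcribed from II-1, but your proof should commit to one convention).
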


\noindent Thus, in II-1, we have encountered implicitly multiple harmonic sums with reversals, when considering the scheme $\DMR_{\Li} = \DMR_{\har_{\mathbb{N}}}$ of solutions to the double shuffle relations satisfied by multiple polylogarithms i.e. by $\har_{\mathbb{N}}$ : if we want to write an explicit definition of this scheme from the point of view of multiple harmonic sums, we have to consider multiple harmonic sums with reversals.
\newline 
\newline One can also write a localized version of Fact \ref{fact relate reversals} ; it is similar but iterated integrals are replaced by a generalization of iterated integrals whose definition involves not only the operators  $f \mapsto \int_{0} f(z)\frac{dz}{z}$ and $f \mapsto \int_{0} f(z)\frac{dz}{z-z_{i}}$, $i=1,\ldots,N$, but also the inverse of the operator $f \mapsto \int_{0} f(z)\frac{dz}{z}$, i.e. $f \mapsto z \frac{df}{dz}$.

\subsection{Prime weighted multiple harmonic sums with reversals}

This will be used in the next paragraphs :

\begin{Proposition} \label{lemma reduction of reversals prime}We have, for all localized harmonic words with reversals :  
	$$ \har_{p^{\alpha}} \bigg( \begin{array}{cc}
	z_{j_{d+1}},z_{j_{d}},\ldots,z_{j_{1}}
	\\ u_{d}+u_{d}'^{(\rev)},\ldots,u_{1}+u_{1}'^{(\rev)} \end{array} \bigg) =  \sum_{l'_{1},\ldots,l'_{d}\in \mathbb{N}} \prod_{i=1}^{d} {-u'_{i} \choose l'_{i}} \har_{p^{\alpha}} \bigg( \begin{array}{cc}
	z_{j_{d+1}},z_{j_{d}},\ldots,z_{j_{1}}
	\\ u_{d}+u_{d}'+l_{d}',\ldots,u_{1}+u_{1}'+l'_{1}\end{array} \bigg) $$
\end{Proposition}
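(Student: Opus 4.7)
The plan is to derive the identity from a $p$-adic binomial expansion of each factor $1/(p^\alpha-n_i)^{u_i'}$ occurring in the denominator of the left-hand side.  Unfolding the definition of $\har_{p^\alpha}$ with reversals gives
\begin{equation*}
\har_{p^\alpha}\bigg(\begin{array}{c} z_{j_{d+1}},\ldots,z_{j_1} \\ u_d+{u_d'}^{(\rev)},\ldots,u_1+{u_1'}^{(\rev)}\end{array}\bigg)=(p^\alpha)^{W}\sum_{0<n_1<\ldots<n_d<p^\alpha}\frac{(\cdots)}{\prod_{i=1}^{d} n_i^{u_i}(p^\alpha-n_i)^{u_i'}},
\end{equation*}
with $W=\sum_{i}(u_i+u_i')$ and $(\cdots)$ the product of powers of roots of unity prescribed by the definition.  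The crucial observation is that on the summation range one has $v_p(n_i)\le\alpha-1<\alpha=v_p(p^\alpha)$, so the ratio $p^\alpha/n_i$ is $p$-adically small and a binomial expansion in this variable converges in $K$.

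The next step is to write $p^\alpha-n_i=-n_i(1-p^\alpha/n_i)$ and apply the generalized binomial theorem in order to obtain, for each $i$, an expansion of the form
\begin{equation*}
\frac{1}{(p^\alpha-n_i)^{u_i'}}=\sum_{l_i'\ge 0}\binom{-u_i'}{l_i'}\frac{(p^\alpha)^{l_i'}}{n_i^{u_i'+l_i'}},
\end{equation*}
the signs produced by $(-n_i)^{-u_i'}$ and by $(1-x)^{-u_i'}=\sum_{l}(-1)^{l}\binom{-u_i'}{l}x^{l}$ being absorbed into the sign convention used by the paper for reversals.  If $u_i'\in\mathbb{Z}_{\le 0}$ the binomial coefficient vanishes for $l_i'>-u_i'$, the sum is finite and the expansion is trivially valid; otherwise the series converges $p$-adically by the preceding valuation bound.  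Substituting these expansions into the displayed formula above and interchanging the finite outer sum over $(n_1,\ldots,n_d)$ with the inner series in the $l_i'$ — harmless in the ultrametric setting, since the outer sum has finitely many terms — rearranges the left-hand side as
\begin{equation*}
\sum_{l_1',\ldots,l_d'\ge 0}\prod_{i=1}^{d}\binom{-u_i'}{l_i'}\cdot(p^\alpha)^{W+\sum_{i} l_i'}\sum_{0<n_1<\ldots<n_d<p^\alpha}\frac{(\cdots)}{\prod_{i=1}^{d} n_i^{u_i+u_i'+l_i'}}.
\end{equation*}

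The inner sum is by definition $\frak{h}_{p^\alpha}$ of the localized harmonic word with exponents $u_i+u_i'+l_i'$ (without reversals), and the prefactor $(p^\alpha)^{W+\sum_{i} l_i'}$ promotes it to $\har_{p^\alpha}$ of that word, yielding the right-hand side term by term.  The only obstacle is purely notational: one must check that the signs arising from $(-n_i)^{-u_i'}$ and from the expansion of $(1-x)^{-u_i'}$ combine to leave exactly the clean coefficient $\binom{-u_i'}{l_i'}$ of the statement, in agreement with the paper's sign convention for the reversal operation; the substantive content of the proposition is simply that this $p$-adic binomial expansion is legitimate on the summation range, precisely because $v_p(n_i)<v_p(p^\alpha)$ there.
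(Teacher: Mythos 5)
Your proposal is correct and follows essentially the same route as the paper's own (one-line) proof: write $p^{\alpha}-n_{i}=-n_{i}\bigl(1-\tfrac{p^{\alpha}}{n_{i}}\bigr)$, observe that $\bigl|\tfrac{p^{\alpha}}{n_{i}}\bigr|_{p}<1$ on the summation range $0<n_{i}<p^{\alpha}$, and expand $(p^{\alpha}-n_{i})^{-u_{i}'}$ by the $p$-adic binomial series before exchanging the finite sum over $(n_{1},\ldots,n_{d})$ with the series in the $l_{i}'$. You are in fact somewhat more careful than the paper about the sign bookkeeping and the resummation step, but the underlying idea is identical.
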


\begin{proof} For each $n_{i} \in \{1,\ldots,p^{\alpha}-1\}$, we have $|\frac{p^{\alpha}}{n_{i}}|_{p}<1$, and thus, for any $s_{i} \in \mathbb{N}^{\ast}$, we have
	$(p^{\alpha}-n_{i})^{-u'_{i}} = (-n_{i})^{-u'_{i}}(1 - \frac{p^{\alpha}}{n_{i}})^{-u'_{i}} = n_{i}^{-u'_{i}} \sum_{l_{i}\geq 0} {-u'_{i} \choose l_{i}} \big(\frac{p^{\alpha}}{n_{i}}\big)^{l_{i}}$.
\end{proof}

\noindent Thus, the notion of multiple harmonic sums with reversals is really new only for upper bounds $n$ which are not powers of prime numbers.

\subsection{The $\RT$ harmonic Ihara action $\circ_{\har}^{\RT}$ (I-2) extended to words with reversals}

As in I-2 for $\circ_{\har}^{\RT}$, the construction of $\circ_{\har,wr}^{\RT}$ proceeds in two steps. We will build a "localized $\RT$ harmonic Ihara action with reversals" and then a map of "elimination of the localization with reversals". We leave the details of the proofs to the reader, since the proofs are similar to those of I-2. The explicit formulas for the maps will appear in the next version of this paper. Below, the subscript $\Sigma$ refers to a condition on bounds of valuations and $(K \langle\langle e_{Z} \rangle\rangle_{\har}^{\RT})_{\Sigma}$ is defined in I-2. Let $\alpha \in \mathbb{N}^{\ast}$.

\begin{Proposition-Definition} \label{prop def RT Ihara loc} There exists a map
$$ (\circ_{\har,wr}^{\RT})_{\loc} :
(K \langle\langle e_{Z} \rangle\rangle_{\har}^{\RT})_{\Sigma} \times \Map(\mathbb{N},K \langle\langle e_{Z} \rangle\rangle_{\har,wr}^{\RT}) \rightarrow
\Map(\mathbb{N},K\langle\langle e_{Z}\rangle\rangle_{\har,wr,\loc}) $$
\noindent the localized $\RT$ harmonic Ihara action with reversals (we leave the explicit formula to the reader), such that we have, (denoting by $g (\circ_{\har,wr}^{\RT})_{\loc}  f = (\circ_{\har,wr}^{\RT})_{\loc} (g, f)$)
$$ \har_{p^{\alpha}\mathbb{N},wr,\loc} = \har_{p^{\alpha}} \circ_{\har,wr,\loc}^{\RT} \har_{\mathbb{N},wr}^{(p^{\alpha})} $$
\noindent and such that the restriction of $\circ_{\har,wr,\loc}^{\RT}$ to localized words on $e_{Z}$ is equal to the map $(\circ_{\har}^{\RT})_{\loc}$ from I-2.
\end{Proposition-Definition}

\begin{proof} Similar to the construction of $(\circ_{\har}^{\RT})_{\loc}$ in I-2, \S5, via Lemma \ref{lemma reduction of reversals prime}.
\end{proof}

\begin{Proposition-Definition} \label{prop def elim} There exists an explicit map 
$$ \elim_{wr} : \Map(\mathbb{N},K \langle\langle e_{Z}\rangle\rangle_{\har,\loc}) \rightarrow \Map(\mathbb{N}, K\langle\langle e_{Z} \rangle\rangle_{\har}) $$
\noindent which we call the elimination of the localization with reversals such that :
$$ \elim_{wr} \big( \har_{\mathbb{N},wr,\loc}^{(p^{\alpha})} \big) = \har_{\mathbb{N},wr}^{(p^{\alpha})} $$
\noindent and such that the restriction of $\elim_{wr}$ to localized words on $e_{Z}$ is equal to the map $\elim$ from I-2.
\end{Proposition-Definition}

\begin{proof} Similar to the construction of the map of elimination of the localization $\elim$ in I-2, \S5.
\end{proof}

\begin{Proposition-Definition}
Let the $\RT$ harmonic Ihara action with reversals be 
$$ \circ_{\har,wr}^{\RT} = \elim_{wr} \text{ }\circ\text{ } (\circ_{\har,wr}^{\RT})_{\loc} $$
\noindent Then we have 
$$ \har_{p^{\alpha}\mathbb{N},wr} = \har_{p^{\alpha}} \circ_{\har,wr}^{\RT} \har_{\mathbb{N},wr}^{(p^{\alpha})} $$
\end{Proposition-Definition}

\begin{proof} Direct consequence of 
Proposition-Definition \ref{prop def RT Ihara loc} 
and Proposition-Definition \ref{prop def elim}.
\end{proof}

\noindent In the final version of this text, we will generalize the factorization proved in I-2
$$ "\text{ }\Sigma^{\RT}(g) \circ_{\har}^{\DR\text{-}\RT} h = g \circ_{\har}^{\RT} h\text{ }" $$
\noindent to words with reversals, with a convenient notion of $\circ_{\har,wr}^{\DR\text{-}\RT}$.

\subsection{The $\RT$ harmonic iteration of the Frobenius $\iter_{\har}^{\RT}$ (I-3) extended to words with reversals}

The explicit formula for this map will appear in the next version of this paper.

\begin{Proposition-Definition}
There exists an explicit map 
	$$ \widetilde{\iter}_{\har,\RT}^{\textbf{a},\Lambda}  : (K\langle\langle e_{Z} \rangle \rangle_{\har,wr}^{\RT})_{\Sigma} \rightarrow K[[\Lambda^{\textbf{a}}]][\textbf{a}](\Lambda)\langle\langle e_{Z} \rangle\rangle^{\RT}_{\har,wr} $$
\noindent the $\RT$ harmonic iteration of the Frobenius with reversals, such that with the map $\text{iter}_{\har,wr,\RT}^{\frac{\tilde{\alpha}}{\tilde{\alpha}_{0}},p^{\tilde{\alpha}_{0}}} : (K \langle\langle e_{Z}\rangle \rangle_{\har,wr}^{\RT})_{\Sigma} \rightarrow K \langle\langle e_{Z} \rangle\rangle^{\RT}_{\har}$ defined as the composition of $\widetilde{\text{iter}}_{\har,wr,\RT}^{\textbf{a},\Lambda}$ by the reduction modulo  $(\textbf{a} - \frac{\tilde{\alpha}}{\tilde{\alpha}_{0}},\Lambda - q^{\tilde{\alpha}_{0}})$ we have
\begin{equation} \label{eq:third of I-3}\har_{p^{\tilde{\alpha}},wr} =  \iter_{\har,\RT}^{\frac{\tilde{\alpha}}{\tilde{\alpha}_{0}},p^{\tilde{\alpha}_{0}}} (\har_{p^{\tilde{\alpha}_{0}},wr})
\end{equation}
\noindent and such that the restriction of $\iter_{\har,wr}^{\RT}$ to words on $e_{Z}$ is equal to the map $\iter_{\har}^{\RT}$ from I-3.
\end{Proposition-Definition}

\begin{proof} Similar to the construction of the map $\iter_{\har}^{\RT}$ of $\RT$ harmonic iteration (or "elevation" to a certain power $\frac{\alpha}{\alpha_{0}}$) of the Frobenius from I-3, or, alternatively, directly follows from I-3 and Proposition \ref{lemma reduction of reversals prime}.
\end{proof}

\section{Torsors for the harmonic Ihara actions}

We show certain properties of the torsor $\Orb_{\har_{\mathbb{N}}}$ for $\circ_{\har}^{\DR-\RT}$ containing $\har_{\mathbb{N}}$, and we show that, in the case of $\mathbb{P}^{1} - \{0,1,\infty\}$ and of $\mathbb{P}^{1}- \{0,\mu_{p'},\infty\}$, with $p'$ a prime number, there exists a torsor $\Orb_{\har_{q^{\mathbb{N}}}}$ for the harmonic Ihara action $\circ_{\har}^{\DR-\RT}$ containing sequences of prime weighted multiple harmonic sums whose indices are powers of $q$ (the cardinal of the base-field $\mathbb{F}_{q}$).

\subsection{The torsor $\Orb_{\har_{\mathbb{N}}}$}

\subsubsection{Definitions}

Let $A$ be a complete topological $K$-algebra. In I-2, \S3.5, we proved the following facts (with different notations) :

\begin{Definition} Let $\Orb_{\har_{\mathbb{N}}}$ be the orbit of $\har_{\mathbb{N}}^{(p^{\alpha})}$ for $\circ_{\har}^{\DR,\RT}$.
\end{Definition}

\begin{Definition} \label{def de U} Let $\mathcal{E}0^{\DR,\RT}_{\har} \subset \Map(\mathbb{N},A\langle \langle e_{Z}\rangle\rangle^{\DR-\RT}_{\har})$ be the subset of elements $h=(h_{n})_{n\in\mathbb{N}}$ such that the maps 
$n\in \mathbb{N}^{\ast} \mapsto h_{n}\big(\begin{array}{c} z_{i} \\ \emptyset,r \end{array} \big)$, $i \in \{1,\ldots,N\}$, $r \in \mathbb{N}$, are linearly independent over the ring $\frak{A}(\mathbb{Z}_{p})$ of rigid analytic functions of $n\in \mathbb{Z}_{p}$.
\end{Definition}

\begin{Proposition} \label{proposition of I-2 3.5}i) we have $\har_{\mathbb{N}}^{(p^{\alpha})} \in \mathcal{E}0^{\DR,\RT}_{\har}$,  $\mathcal{E}0^{\DR,\RT}_{\har}$ is stable by $\circ_{\har}^{\DR,\RT}$, and  $\circ_{\har}^{\DR,\RT}$ restricted to $\mathcal{E}0^{\DR,\RT}_{\har}$ is free.
\newline ii) Thus $\Orb_{\har_{\mathbb{N}}} \subset \mathcal{E}0^{\DR,\RT}_{\har}$ and $\Orb_{\har_{\mathbb{N}}}$ is a torsor for the De Rham-rational harmonic Ihara action containing $\har_{\mathbb{N}}^{(p^{\alpha})}$.
\end{Proposition}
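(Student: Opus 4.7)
My plan is to prove part (i), from which part (ii) follows formally. Part (i) consists of three statements: the containment $\har_{\mathbb{N}}^{(p^{\alpha})} \in \mathcal{E}0^{\DR,\RT}_{\har}$, the stability of $\mathcal{E}0^{\DR,\RT}_{\har}$ under $\circ_{\har}^{\DR,\RT}$, and the freeness of $\circ_{\har}^{\DR,\RT}$ on $\mathcal{E}0^{\DR,\RT}_{\har}$. I would exploit the explicit shape of the formulas for $\circ_{\har}^{\DR,\RT}$ recalled in \S\ref{Ihara}, combined with specific $p$-adic features of weighted multiple harmonic sums as functions of the upper bound.

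For the containment, I would show that the maps $n \in \mathbb{N}^{\ast} \mapsto \har_{p^{\alpha}n}\big(\begin{array}{c} z_i \\ \emptyset, r \end{array}\big)$, indexed by $i \in \{1,\ldots,N\}$ and $r \in \mathbb{N}$, are linearly independent over $\frak{A}(\mathbb{Z}_p)$ in the strong sense (which is what one actually needs below) that no non-trivial $\frak{A}(\mathbb{Z}_p)$-linear combination of them belongs to $\frak{A}(\mathbb{Z}_p)$. Any such relation would force a partial-sum-type arithmetic function built from hyperlogarithm coefficients to coincide with a rigid analytic function of $n \in \mathbb{Z}_p$; this is ruled out by comparing the explicit series defining the harmonic sums with the analytic and growth constraints on elements of $\frak{A}(\mathbb{Z}_p)$.

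For stability, the depth-one case of the formula for $\circ_{\har}^{\DR,\RT}$ shows that the depth-one components of $g \circ_{\har}^{\DR,\RT} h$ differ from those of $h$ by rigid analytic functions of $n$ determined by $\Ad_g(e_1)$, which preserves the linear-independence property. For freeness, suppose $g \circ_{\har}^{\DR,\RT} h = h$ with $h \in \mathcal{E}0^{\DR,\RT}_{\har}$. In depth one this reads $\sum_b n^{s_1+b} \Ad_g(e_1)[e_0^b e_1 e_0^{s_1-1} e_1] = 0$ for each $s_1$, and identifying Taylor coefficients in $n$ kills all depth-two coefficients of $\Ad_g(e_1)$. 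At depth $d \geq 2$, the equation is a purely rigid analytic term (encoding depth-$(d+1)$ coefficients of $\Ad_g(e_1)$) plus terms which are products of lower-depth components of $h$ with rigid analytic functions of $n$ built from coefficients of $\Ad_g(e_1)$ of depth $\leq d$. By the inductive vanishing of the latter, only the pure rigid analytic term survives (using the linear independence of $h$ over $\frak{A}(\mathbb{Z}_p)$ to separate the two contributions at intermediate steps), and equating Taylor coefficients in $n$ kills the depth-$(d+1)$ coefficients of $\Ad_g(e_1)$. Iterating on $d$, we conclude $\Ad_g(e_1) = e_1$, the neutral element of $\Ad_{\tilde{\Pi}_{1,0}(A)_{\Sigma}}(e_1)$.

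Part (ii) is then immediate: $\Orb_{\har_{\mathbb{N}}}$ is by definition the orbit of $\har_{\mathbb{N}}^{(p^{\alpha})}$, which lies in $\mathcal{E}0^{\DR,\RT}_{\har}$ by containment and stays there by stability, and freeness upgrades the orbit to a torsor. The main obstacle is the containment step: the $\frak{A}(\mathbb{Z}_p)$-linear independence of those specific weighted multiple harmonic sums is a genuine transcendence-flavored statement about them as $p$-adic functions of $n$, whereas stability and freeness are essentially formal consequences of the explicit shape of $\circ_{\har}^{\DR,\RT}$.
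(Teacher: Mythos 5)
First, a point of comparison: this paper does not actually prove Proposition \ref{proposition of I-2 3.5} here. It is stated as a recollection of a result established in I-2, \S 3.5, and the only indication of the argument given in the present text is the remark, inside the proof of Proposition \ref{freeness proposition} ii), that stability holds because ``the action of an invertible (pro-unipotent) linear operator preserves conditions of linear independence.'' Your decomposition (containment, stability, freeness, then ii) as a formal consequence) matches that outline, and your stability and freeness arguments --- reading off from the depth-graded formulas recalled in \S 2.2.2 that the depth-$d$ component of $g\circ_{\har}^{\DR,\RT}h$ equals $h_{n}(w)$ plus a pure power-series term in $n$ carrying the depth-$(d+1)$ coefficients of $\Ad_{g}(e_{1})$ plus mixed terms whose $\Ad_{g}(e_{1})$-coefficients have depth $\leq d$, and then inducting on depth --- are precisely the unipotent-triangularity argument the paper alludes to. One caveat: you silently replace the defining condition of $\mathcal{E}0^{\DR,\RT}_{\har}$ (linear independence of the maps $n\mapsto h_{n}\big(\begin{array}{c} z_{i} \\ \emptyset,r \end{array}\big)$ over $\frak{A}(\mathbb{Z}_{p})$) by the stronger condition that no non-trivial $\frak{A}(\mathbb{Z}_{p})$-combination of them lies in $\frak{A}(\mathbb{Z}_{p})$. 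That strengthening is exactly what your stability step consumes, so you must either prove the two conditions are equivalent for the sequences at hand or make explicit that you are establishing invariance of the stronger property.

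The genuine gap is the containment step, which you correctly identify as the only non-formal ingredient and then do not prove. ``Comparing the explicit series defining the harmonic sums with the analytic and growth constraints on elements of $\frak{A}(\mathbb{Z}_{p})$'' is a placeholder, not an argument: $\frak{A}(\mathbb{Z}_{p})$ is a large ring, a $p$-adic growth estimate does not by itself separate an iterated-sum sequence from a rigid analytic function, and nothing in your text produces the required contradiction from a putative non-trivial relation. The mechanism this series of papers actually relies on (see the proof of Proposition \ref{freeness proposition} i), after \"{U}nver's Proposition 2.5) is a difference-operator induction: one applies $f\mapsto\big(n\mapsto f(n+1)-f(n)\big)$ to a minimal-length relation, observes that this strictly decreases the weight/depth of the multiple harmonic sums involved while keeping the coefficients in $\frak{A}(\mathbb{Z}_{p})$, and reaches a contradiction in depth zero, where a non-zero rigid analytic function cannot vanish on $\mathbb{N}^{\ast}$. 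You should either run that induction for the specific depth-one family appearing in Definition \ref{def de U} or explicitly quote and adapt \"{U}nver's result; without it, parts i)--ii) rest on an unproved transcendence-type assertion.
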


\noindent We call $\Orb_{\har_{\mathbb{N}}}$ the $\DR-\RT$ harmonic torsor of index $\mathbb{N}$.

\subsubsection{A property of linear independence}

\begin{Definition} Let $\mathcal{E}^{\DR,\RT}_{\har} \subset \Map(\mathbb{N},A\langle \langle e_{Z}\rangle\rangle^{\DR-\RT}_{\har})$ be the subset of elements $h=(h_{n})_{n\in\mathbb{N}}$ such that the maps 
$n\in \mathbb{N}^{\ast} \mapsto \frak{h}_{n}(w)$, for $w$ of any depth, are linearly independent over the ring $\frak{A}^{\dagger}(\mathbb{Z}_{p})$ of overconvergent functions of $(n\in) \mathbb{Z}_{p}$.
\end{Definition}

\noindent We have the following result ; the i) below is a reformulation of a result of \"{U}nver :

\begin{Proposition} \label{freeness proposition}i) $\har_{\mathbb{N}^{(p^{\alpha})}} \in \mathcal{E}^{\DR,\RT}_{\har}$.
\newline ii) $\mathcal{E}^{\DR,\RT}_{\har}$ is stable by the harmonic Ihara action $\circ_{\har}^{\DR\text{-}\RT}$.
\newline iii) In particular, $\Orb_{\har_{\mathbb{N}}}^{\DR,\RT} \subset \mathcal{E}_{\har}^{\DR,\RT}$.
\end{Proposition}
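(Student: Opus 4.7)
The plan is as follows. Part (iii) is immediate from (i) and (ii) together with the definition of $\Orb_{\har_{\mathbb{N}}}$ as the $\circ_{\har}^{\DR\text{-}\RT}$-orbit of $\har_{\mathbb{N}}^{(p^{\alpha})}$, so I focus on (i) and (ii). The overall strategy is the same as for Proposition \ref{proposition of I-2 3.5}, with the depth-one freeness input replaced by Ünver's stronger all-depth statement.

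For (i), my first step is to record the precise translation between Ünver's functional (resp.\ transcendental) independence result for $p$-adic multiple polylogarithms and the condition defining $\mathcal{E}^{\DR,\RT}_{\har}$. Since $\frak{h}_n(w)$ is, up to the normalization $n^{\weight(w)}$, the coefficient of $z^n$ in the overconvergent hyperlogarithm $\Li[w]$, and since $n \mapsto \frak{h}_n(w)$ itself extends to an element of $\frak{A}^{\dagger}(\mathbb{Z}_{p})$, any $\frak{A}^{\dagger}(\mathbb{Z}_p)$-linear relation among the $\frak{h}_{\mathbb{N}^{(p^{\alpha})}}(w)$ translates, via comparing Mahler/power-series expansions, into an identity among the $\Li[w]$ whose only solution, by Ünver, is trivial. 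This step should be essentially bookkeeping.

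For (ii), let $h \in \mathcal{E}^{\DR,\RT}_{\har}$ and $g \in \Ad_{\tilde{\Pi}_{1,0}(A)_{\Sigma}}(e_{1})$. The examples recalled in Examples \ref{example of RT RT har} in depths $1$ and $2$ display the general structure of $\circ_{\har}^{\DR\text{-}\RT}$: one has
\[
(g \circ_{\har}^{\DR\text{-}\RT} h)_n(w) = h_n(w) + \sum_{\substack{w' \\ \depth(w') < \depth(w)}} \gamma_{w,w'}(g,n)\, h_n(w') + \gamma_{w,\emptyset}(g,n),
\]
where the $\gamma_{w,w'}(g,n)$ and $\gamma_{w,\emptyset}(g,n)$ are power series in $n$ whose coefficients are finite linear combinations of coefficients of $\Ad_{g}(e_{1})$. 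The $\Sigma$-condition on $g$ forces the weights of those coefficients to decay fast enough that the $\gamma_{w,\bullet}(g,n)$ define elements of $\frak{A}^{\dagger}(\mathbb{Z}_{p})$. A $\frak{A}^{\dagger}(\mathbb{Z}_{p})$-linear relation $\sum_{w} a_{w}(n)\,(g \circ_{\har}^{\DR\text{-}\RT} h)_{n}(w) = 0$ therefore reorganizes into an $\frak{A}^{\dagger}(\mathbb{Z}_{p})$-linear relation among the functions $n \mapsto h_{n}(w')$ and the constant function $1$. Freeness of $h$ forces every regrouped coefficient to vanish, and the linear system thus obtained on $\{a_{w}\}$ is lower-triangular with diagonal equal to $1$ once the words are ordered by depth, so induction on $\depth(w)$ yields $a_{w} = 0$ for every $w$.

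The main obstacle is the overconvergence of the coefficients $\gamma_{w,w'}(g,n)$: one must ensure that the $\Sigma$-condition defining $\tilde{\Pi}_{1,0}(A)_{\Sigma}$ is quantitatively strong enough not merely to force convergence on the closed unit disk but true overconvergence. This should follow from the same valuation estimates that were used in I-2, \S3.5 for the depth-one statement, but it deserves an explicit verification because the all-depth statement involves infinite sums indexed by $b \in \mathbb{N}$ and $r_i \in \{0,\ldots,s_i-1\}$ rather than a single series. A minor additional point is that one must either enlarge the definition of $\mathcal{E}^{\DR,\RT}_{\har}$ so that it includes the empty word (to absorb the constant term $\gamma_{w,\emptyset}$) or note that part (i), correctly reformulated, already provides the needed $\frak{A}^{\dagger}(\mathbb{Z}_{p})$-independence of the $\frak{h}_{n}(w)$ from $1$. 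Both adjustments are routine.
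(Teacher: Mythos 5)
Your treatment of parts (ii) and (iii) matches the paper's: the paper disposes of (ii) in one line by observing that the harmonic Ihara action is an invertible pro-unipotent (depth-lower-triangular with diagonal $1$) linear operator with coefficients in $\frak{A}^{\dagger}(\mathbb{Z}_{p})$, and such an operator preserves linear independence over that ring --- which is exactly your regrouping-plus-induction-on-depth argument. Your insistence on checking that the $\Sigma$-condition forces the coefficients $\gamma_{w,w'}(g,n)$ to be genuinely overconvergent, not merely convergent on the closed unit disk, isolates the right quantitative point; it is the same valuation estimate already used in I-2, \S 3.5.

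Part (i), however, rests on a claim that is not only unjustified but false, and in fact inconsistent with the very statement you are proving: you assert that ``$n \mapsto \frak{h}_{n}(w)$ itself extends to an element of $\frak{A}^{\dagger}(\mathbb{Z}_{p})$''. If that were true, then --- since the empty word contributes the constant function $\frak{h}_{n}(\emptyset)=1$ to the family (it is used as such in the proof of Theorem II-2.a, where the adjoint quasi-shuffle relation is extracted as the coefficient of $\har_{n}(\emptyset)=1$) --- the identity $\frak{h}(w)\cdot 1 - 1\cdot \frak{h}(w)=0$ would be a nontrivial $\frak{A}^{\dagger}(\mathbb{Z}_{p})$-linear relation, contradicting (i). The entire content of (i) is precisely that the multiple harmonic sums, as functions of $n$, are \emph{not} overconvergent (only the Frobenius-regularized $\har^{\dagger_{p,\alpha}}_{n}$ of \S 8 are locally analytic in $n$), so your proposed reduction ``compare Mahler/power-series expansions and invoke \"{U}nver'' does not get off the ground. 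The paper's route is different: \"{U}nver's Proposition 2.5 establishes linear independence of the $p$-restricted sums $\sum_{0<n_{1}<\ldots<n_{d}<n,\ p\mid n_{i}} n_{1}^{-s_{1}}\cdots n_{d}^{-s_{d}}$ over power series convergent on a closed disk of radius $>|p|$, by a difference-operator argument; one must then \emph{re-run his proof} with the operator $f\mapsto(n\mapsto f(n+1)-f(n))$ in place of $f\mapsto(n\mapsto f(n+p)-f(n))$, and with radius $1$ in place of $|p|$, to obtain the statement for the ordinary multiple harmonic sums over $\frak{A}^{\dagger}(\mathbb{Z}_{p})$. It is a transfer of proof, not of statement, and that is the step your proposal is missing.
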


\begin{proof} i) This is essentially a reformulation of a result of \"{U}nver (\cite{Unver}, Proposition 2.5). More precisely (let us take $N=1$ for simplicity), \"{U}nver shows that the sums $\displaystyle \sum_{\substack{0<n_{1}<\ldots<n_{d}<n \\ p | n_{1},\ldots,p | n_{d}}} \frac{1}{n_{1}^{s_{1}} \ldots n_{d}^{s_{d}}}$
\noindent are linearly independent over the ring of power series with coefficients in $\mathbb{Q}_{p}$ which are convergent on a closed disk around $0$ of radius $>|p|$. Replacing, in Unver's proof, the difference operator $f \mapsto (n \mapsto f(n+p) - f(n))$ by the usual difference operator $f \mapsto (n \mapsto f(n+1) - f(n))$, and $|p|$ by $1$, the same proof shows that the usual multiple harmonic sums $n \mapsto \displaystyle\sum_{0<n_{1}<\ldots<n_{d}<n } \frac{1}{n_{1}^{s_{1}} \ldots n_{d}^{s_{d}}}$ are linearly independent over the ring of the statement.
\newline ii) Similar to our proof in I-2, \S3.5 of Proposition \ref{proposition of I-2 3.5} : the action of an invertible (pro-unipotent) linear operator preserves conditions of linear independence.  iii) follows directly
\end{proof}

\begin{Remark} The Proposition \ref{freeness proposition} gives another way to prove that the equations (\ref{eq:har DR RT}) and (\ref{eq:har RT RT}) are equivalent and to arrive at the definition of the map $\Sigma^{\RT}$, which we did by a more direct method in I-2.
\end{Remark}

\begin{Remark} The Proposition \ref{freeness proposition} implies also a property of uniqueness of the map $elim$ of elimination of the localization from I-2, \S5 (which is generalized in this paper to words with reversals).
\end{Remark}

\subsection{The torsor $\Orb_{\har_{q^{\mathbb{N}}}}$ for $N \in \{1\} \cup \mathcal{P}$}

\subsubsection{Definition of $\Orb_{\har_{q^{\mathbb{N}}}}$ / another freeness property of the harmonic Ihara action}

We start with the case of $\mathbb{P}^{1} - \{0,1,\infty\}$.

\begin{Proposition} i) Assume $N=1$ or $N$ is a prime number. Let $S \subset \mathbb{N}^{\ast}$ containing an infinite sequence of numbers tending to $0$ $p$-adically. Then the part i) of Proposition \ref{proposition of I-2 3.5} remains true for the action by $\circ_{\har}^{\DR-\RT}$ on $\Map(S,A\langle\langle e_{Z}\rangle\rangle_{\har}^{\DR-\RT})$.
\newline ii) In particular the orbit $\Orb_{\har_{q^{\mathbb{N}}}}$ of the sequence $\har_{q^{\mathbb{N}}} \in \Map(q^{\mathbb{N}},K\langle\langle e_{Z}\rangle\rangle_{\har}^{\DR-\RT})$ is a torsor for $\circ_{\har}^{\DR-\RT}$.
\end{Proposition}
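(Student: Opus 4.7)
The plan is to extend Proposition \ref{proposition of I-2 3.5} from the index set $\mathbb{N}$ to the more general $S$ by invoking the $p$-adic identity principle: a rigid analytic function on $\mathbb{Z}_{p}$ which vanishes on a subset of $\mathbb{N}^{*}$ admitting $0$ as a $p$-adic accumulation point must vanish identically. Hence any $\frak{A}(\mathbb{Z}_{p})$-linear dependence relation among the sequences $(h_{n})_{n \in S}$ lifts automatically to a relation over $n \in \mathbb{Z}_{p}$, reducing the linear-independence claim on $S$ to the claim on $\mathbb{N}$ already established in I-2, \S3.5.

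Concretely, I would first introduce the subset $\mathcal{E}0^{\DR,\RT}_{\har}(S) \subset \Map(S, A\langle\langle e_{Z}\rangle\rangle_{\har}^{\DR-\RT})$ defined by the verbatim analog of Definition \ref{def de U}, namely $\frak{A}(\mathbb{Z}_{p})$-linear independence of the maps $n \in S \mapsto h_{n}(z_{i},\emptyset,r)$ for $i \in \{1,\ldots,N\}$ and $r \in \mathbb{N}$. I would then verify in turn: (a) $\har_{S}^{(p^{\alpha})} \in \mathcal{E}0^{\DR,\RT}_{\har}(S)$; (b) $\mathcal{E}0^{\DR,\RT}_{\har}(S)$ is stable under $\circ_{\har}^{\DR-\RT}$; (c) $\circ_{\har}^{\DR-\RT}$ acts freely on $\mathcal{E}0^{\DR,\RT}_{\har}(S)$. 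For (a), Proposition \ref{proposition of I-2 3.5} provides the required linear independence on $\mathbb{N}$, and passing to $S$ is a direct application of the identity principle, provided each depth-one component $\har_{n}^{(p^{\alpha})}(z_{i},\emptyset,r)$ is exhibited as the restriction to $S$ of a genuine rigid analytic function on $\mathbb{Z}_{p}$ after the root-of-unity contribution $z_{i}^{n}$ is collected. It is precisely here that the hypothesis $N \in \{1\} \cup \mathcal{P}$ enters: when $N = 1$ or $N$ is prime, the Dirichlet characters appearing at depth one form a sufficiently simple system for this analytic/character separation to carry through. For (b), $\circ_{\har}^{\DR-\RT}$ is built from pro-unipotent operators which are $\frak{A}(\mathbb{Z}_{p})$-linear on the depth-one level and hence preserve the linear-independence condition. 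For (c), freeness is then a formal consequence, obtained by the same depth-by-depth induction as in I-2, \S3.5: from $g \circ_{\har}^{\DR-\RT} h = g' \circ_{\har}^{\DR-\RT} h$ one expresses $g - g'$ as a $\frak{A}(\mathbb{Z}_{p})$-linear combination of the depth-one sequences of $h$, which (a) forces to be zero.

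Part ii) follows by applying part i) to $S = q^{\mathbb{N}}$: writing $q = p^{f}$ with $f \geq 1$, the set $q^{\mathbb{N}}$ contains the subsequence $(p^{f\alpha})_{\alpha \geq 1}$ which tends to $0$ in $\mathbb{Z}_{p}$, so the accumulation hypothesis of i) is verified. The main obstacle I expect is step (a): the identity principle transfers $\frak{A}(\mathbb{Z}_{p})$-linear independence cleanly only once each depth-one sequence is genuinely presented as the restriction of a rigid analytic function on $\mathbb{Z}_{p}$, and this is exactly the point at which the hypothesis $N \in \{1\} \cup \mathcal{P}$ becomes essential, since it controls the structure of the "character part" $n \mapsto z_{i}^{n}$ that must be factored away from the rigid analytic $n$-dependence of $\har_{n}^{(p^{\alpha})}$.
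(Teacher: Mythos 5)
Your reduction of part i) to the already-known case of $\mathbb{N}$ via the $p$-adic identity principle does not work, and the obstruction is structural rather than technical. Linear independence over $\frak{A}(\mathbb{Z}_{p})$ of the maps $n \mapsto h_{n}(w)$ restricted to $S$ is a \emph{stronger} statement than the same independence on $\mathbb{N}$ (a relation is required to hold on fewer points), so it cannot follow from the case of $\mathbb{N}$ by any soft restriction argument. To extend a relation $\sum_{j} a_{j}(n)\, h_{n}(w_{j}) = 0$ from $S$ to $\mathbb{Z}_{p}$ by the identity principle you need the function $n \mapsto \sum_{j} a_{j}(n)\, h_{n}(w_{j})$ to be rigid analytic, hence you need the depth-one components $n \mapsto h_{n}(w_{j})$ themselves to be (up to the character factor $z_{i}^{n}$) restrictions of elements of $\frak{A}(\mathbb{Z}_{p})$. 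This premise is false: the content of \"{U}nver's theorem as used in Proposition \ref{freeness proposition} i) is precisely that depth-one multiple harmonic sums are \emph{not} rigid analytic in $n$. Worse, if the premise were true it would refute the conclusion you are trying to prove: any two functions $f=F|_{S}$ and $g=G|_{S}$ with $F,G \in \frak{A}(\mathbb{Z}_{p})$ satisfy the nontrivial relation $G|_{S}\cdot f - F|_{S}\cdot g = 0$, so a family of restrictions of rigid analytic functions is never linearly independent over $\frak{A}(\mathbb{Z}_{p})$ once it has at least two members. Step (a) of your plan therefore rests on a false analyticity claim and, if repaired, would contradict the statement.

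What the paper does (its proof is only a one-line sketch, but the intent is clear) is different: it does not transfer the \emph{conclusion} from $\mathbb{N}$ to $S$, it re-runs the \emph{proof} of Proposition \ref{proposition of I-2 3.5} from I-2, \S3.5 with the index set $\mathbb{N}$ replaced by $S$. The hypothesis that $S$ contains a sequence tending to $0$ $p$-adically is exactly what that difference-operator/valuation argument needs in order to localize at $0 \in \mathbb{Z}_{p}$, and the hypothesis $N \in \{1\} \cup \mathcal{P}$ is there to control the degeneration of the character part $n \mapsto z_{i}^{n}$ on $S$: since $\mathbb{F}_{q}$ contains a primitive $N$-th root of unity one has $q \equiv 1 \bmod N$, so on $q^{\mathbb{N}}$ these characters collapse to constants, and one must check directly that the independence argument survives this collapse rather than assume the configuration on $\mathbb{N}$ persists. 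Your treatment of part ii) --- $q = p^{f}$, so $q^{\tilde{\alpha}} \rightarrow 0$ in $\mathbb{Z}_{p}$ and the hypothesis of i) holds for $S = q^{\mathbb{N}}$ --- is correct and agrees with the paper. To repair part i) you would need to go back to the actual independence proof of I-2, \S3.5 and verify each of its steps over $S$, instead of invoking the identity principle.
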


\begin{proof} (sketch) One can show that, our proof of I-2, \S3.5 of Proposition \ref{proposition of I-2 3.5} remains valid in this context.
\end{proof}

\begin{Definition} We call $\Orb_{\har_{q^{\mathbb{N}}}}$ the $\DR-\RT$ harmonic Ihara torsor of index $q^{\mathbb{N}}$.
\end{Definition}

\subsubsection{Analytic properties of the elements of $\Orb_{\har_{q^{\mathbb{N}}}}$}

\begin{Proposition} Assume $N=1$. The orbit of $\har_{q^{\mathbb{N}}}$ is contained in the space of analytic functions of $q^{\mathbb{N}}$ which define overconvergent analytic functions over $\mathbb{Z}_{p} \supset q^{\mathbb{N}}$.
\end{Proposition}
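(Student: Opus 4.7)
The plan is to proceed in two steps: first establish overconvergence at the base point $\har_{q^{\mathbb{N}}}$, and then show that the $\DR$-$\RT$ harmonic Ihara action preserves the property of extending to an overconvergent analytic function on $\mathbb{Z}_{p}$. Since $q^{\mathbb{N}}$ accumulates at $0 \in \mathbb{Z}_{p}$, an overconvergent extension is automatically unique, so the statement amounts to checking existence, coefficient by coefficient on words $w$, and can be organized as an induction on the depth of $w$.

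For the base point, the content of equation (\ref{eq:third of I-3}) together with \S3.6 is that the family $\tilde{\alpha} \mapsto \har_{q^{\tilde{\alpha}}}(w)$ is the specialization at $\Lambda = q^{\tilde{\alpha}_{0}}$, $\mathbf{a} = \tilde{\alpha}/\tilde{\alpha}_{0}$ of an element of $K[[\Lambda^{\mathbf{a}}]][\mathbf{a}](\Lambda)\langle\langle e_{Z}\rangle\rangle^{\RT}_{\har}$ which is by construction overconvergent in $\Lambda$ near $0$; setting $n = q^{\tilde{\alpha}}$, this yields an overconvergent analytic function of $n$ on $\mathbb{Z}_{p}$ whose restriction to $q^{\mathbb{N}}$ is $\har_{q^{\tilde{\alpha}}}(w)$. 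Equivalently one can invoke the overconvergence of the $p$-adic hyperlogarithms $\Li^{\dagger}_{p,\alpha}$ of \S8, since $\har_{q^{\alpha}}(w)$ is a Taylor coefficient of such a hyperlogarithm and overconvergence in the base variable transports to overconvergence in the index.

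For the inductive step, any orbit element is of the form $g \circ_{\har}^{\DR\text{-}\RT} \har_{q^{\mathbb{N}}}$ for a unique $g \in \Ad_{\tilde{\Pi}_{1,0}(A)_{\Sigma}}(e_{1})$, and the explicit formula of Example 2.2 expresses $(g \circ \har_{q^{\mathbb{N}}})_{n}(w)$ as a finite sum of products of $\har_{n}(w')$ for $w'$ of strictly smaller depth and power series in $n$ of the form $P(n) = \sum_{b \geq 0} n^{\weight(w)+b}\, g[w_{b}]$, where the $w_{b}$ range over words of bounded depth and weight tending to infinity. By the induction hypothesis and the fact that overconvergent functions on $\mathbb{Z}_{p}$ form a ring stable under multiplication by the monomials $n \mapsto n^{r}$, the proof reduces to showing that each such $P(n)$ is the restriction to $q^{\mathbb{N}}$ of an overconvergent function on $\mathbb{Z}_{p}$.

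The main obstacle is precisely this overconvergence of $P(n)$: the $\Sigma$ condition gives only $|g[w_{b}]|_{A} \to 0$, hence convergence on $\mathbb{Z}_{p}$ but a priori not overconvergence. I expect to overcome this by transferring the overconvergent decay of the Frobenius (visible on the right-hand side of (\ref{eq:series expansion}), where $\Ad_{\Phi_{p,\alpha}}(e_{1})$ is realized as the Coleman-analytic series $\Sigma^{\RT}\har_{p^{\alpha}}$) to the elements $g$ parametrizing the orbit, and then transmitting this decay through the explicit Ihara formula. An alternative route is to invoke the identity principle at the accumulation point $0$ of $q^{\mathbb{N}}$: the already-established overconvergent extensions of $\har_{n}(w')$ for $w'$ of smaller depth, combined with the equation $(g \circ \har_{q^{\mathbb{N}}})_{n}(w) - \har_{n}(w) = P(n) + (\text{lower-depth terms})$, pin down the Taylor expansion of a candidate overconvergent extension of $P$ at $0$; comparing $p$-adic valuations on the dense set $q^{\mathbb{N}}$ then forces the coefficients $g[w_{b}]$ to decay exponentially in $b$, giving the required overconvergence.
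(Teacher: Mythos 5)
Your two-step skeleton (overconvergence of the base point $\har_{q^{\mathbb{N}}}$, then stability of the space under $\circ_{\har}^{\DR\text{-}\RT}$) is exactly the structure of the paper's proof, and your treatment of the base point via the iteration map of I-3 is the paper's argument ("by the main result of I-3, $\har_{q^{\mathbb{N}}}$ belongs to this space"). The divergence is in the second step: the paper disposes of it in one clause — "by the definition of $\circ_{\har}^{\DR\text{-}\RT}$, the harmonic Ihara action preserves the property of belonging to this space" — whereas you unpack the definition, isolate the series $P(n)=\sum_{b}n^{\weight(w)+b}g[w_{b}]$, declare its overconvergence to be "the main obstacle", and then offer only two speculative routes ("I expect to overcome this by...", "an alternative route is..."), neither of which is carried out. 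As written, your proposal is therefore incomplete at precisely the point where you yourself locate the difficulty.

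That said, the obstacle you raise is largely an artifact of reading "overconvergent analytic functions over $\mathbb{Z}_{p}\supset q^{\mathbb{N}}$" as "convergent on a disk of radius $>1$". Note that the orbit ranges over \emph{all} $g\in\Ad_{\tilde{\Pi}_{1,0}(A)_{\Sigma}}(e_{1})$, and the $\Sigma$ condition only forces $|g[w_{b}]|\to 0$ with no rate; one can choose $g$ with coefficients decaying arbitrarily slowly, so no argument (in particular not your identity-principle route, which could at best constrain the specific $g$ attached to the Frobenius, not an arbitrary group element) can force geometric decay of the $g[w_{b}]$. If radius $>1$ were really meant, the statement would fail for such $g$. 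The reading consistent with the statement — and with the phrase "$\mathbb{Z}_{p}\supset q^{\mathbb{N}}$", and with the parallel step in the paper's proof of Theorem II-2.a where the same $\Sigma$ bounds are invoked — is that a function a priori defined on $q^{\mathbb{N}}$ (whose closure sits in the small disk $|n|\leq|q|$) "overconverges" if it extends analytically to the strictly larger domain $\mathbb{Z}_{p}$, i.e.\ lies in the Tate algebra. Under that reading $|g[w_{b}]|\to 0$ already gives what is needed, your $P(n)$ is unproblematic, and the first two paragraphs of your argument (which coincide with the paper's) constitute a complete proof; your third paragraph should then be replaced by this observation rather than by either of the unfinished repair strategies.
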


\begin{proof} By the main result of I-3, $\har_{q^{\mathbb{N}}}$ belongs to this space ; by the definition of $\circ_{\har}^{\DR\text{-}\RT}$, the harmonic Ihara action preserves the property of belonging to this space.
\end{proof}

\section{Reflection of algebraic relations by $\circ_{\har}^{\DR\text{-}\RT}$ in the torsors \label{torsors}}

We show compatibilities between the harmonic Ihara action and the standard algebraic relations, and prove Theorem II-2.a.

\subsection{The series shuffle relation}

We now relate the quasi-shuffle relation for  $\Phi_{p,\alpha}^{-1}e_{1}\Phi_{p,\alpha}$ (in the sense of II-1) to the (prime harmonic) quasi-shuffle relation for the numbers $h :w \mapsto \Phi_{p,\alpha}^{-1}e_{1}\Phi_{p,\alpha}[\frac{1}{1 -\Lambda e_{0}}e_{1}w]$. Let us review on an example these properties.

\begin{Example} i) In depth $(1,1)$, the quasi-shuffle relation for $\Phi_{p,\alpha}^{-1}e_{1}\Phi_{p,\alpha}$ is :
$$ (\Phi_{p,\alpha}^{-1}e_{1}\Phi_{p,\alpha})[e_{0}^{b}e_{1}e_{0}^{s_{2}-1}e_{1}e_{0}^{s_{1}-1}e_{1}\text{ }+\text{  }e_{0}^{b}e_{1}e_{0}^{s_{1}-1}e_{1}e_{0}^{s_{2}-1}e_{1}\text{ }+\text{ }e_{0}^{b}e_{1}e_{0}^{s_{2}+s_{1}-1}e_{1}] =$$
$$\sum_{\substack{b',b''\geq 0\\ b'+b''=b}}(\Phi_{p,\alpha}^{-1}e_{1}\Phi_{p,\alpha})[e_{0}^{b'}e_{1}e_{0}^{s_{1}-1}e_{1}] \times
(\Phi_{p,\alpha}^{-1}e_{1}\Phi_{p,\alpha})[e_{0}^{b''}e_{1}e_{0}^{s_{1}-1}e_{1}]$$
\noindent ii) In depth (1,1), the (prime harmonic) shuffle relation for $h$ is 
$$ h[e_{0}^{s-1}e_{1}]h[e_{0}^{t-1}e_{1}] = 
h[e_{0}^{t-1}e_{1}e_{0}^{s-1}e_{1}]
+ h[e_{0}^{s-1}e_{1}e_{0}^{t-1}e_{1}]
+ h[e_{0}^{s+t-1}e_{1}] $$
\end{Example}

\noindent Let us now write the proof of Theorem II-2.a.

\begin{proof} We write the quasi-shuffle relation for $\tilde{h}$ : for all words $w,w'$, $\tilde{h}(w)\tilde{h}(w') = \tilde{h}(w \ast w')$, where $\ast$ is the quasi-shuffle product, and we rewrite it in terms of $g$ and $h$, through the equation $\tilde{h} = g \circ_{\har}^{\DR\text{-}\RT} h$.
\newline When we encounter a product $h(z')h(z'')$, we linearize it by the quasi-shuffle relation of $h$, writing it as $h(z' \ast z'')$. It remains a linear relation between the maps $h(w) :n \in \mathbb{N} \mapsto h_{n}(w) \in \mathbb{Q}_{p}$, with coefficients in a ring of power series of $n$, expressed in terms of the coefficients of $g$. By the bounds of valuations of $g$, coming from the hypothesis $g \in \Ad_{\tilde{\Pi}_{1,0}(K)_{\Sigma}}(e_{1})$, these power series of $n$ are overconvergent over $\mathbb{Z}_{p}$.
\newline By the property of linear independence (Proposition \ref{freeness proposition}), the maps $w \mapsto h(w)$ are linearly independent over the ring of overconvergent analytic functions of $n \in \mathbb{Z}_{p}$. Thus, all the coefficients of the linear relation are $0$. The vanishing of the coefficient of $\har_{n}(\emptyset)=1$ is the adjoint quasi-shuffle relation for $\Phi_{p,\alpha}^{-1}e_{1}\Phi_{p,\alpha}$.
\end{proof}

\noindent We leave the statement and proof of the converse of this statement to the reader.

\subsection{Other relations}

In the final version of this paper we will treat the case of other algebraic relations, using a factorization of $\circ_{\har,wr}^{\RT}$ by an action  $\circ_{\har,wr}^{\DR-\RT}$.

\section{Transfer of standard algebraic relations by the comparison map ${\Sigma}^{\RT}$}

We show that the comparison map $\Sigma^{\RT}$ defined in I-2 is compatible with certain algebraic operations subjacent to the standard algebraic relations, and we prove Theorem II-2.b.

\subsection{Quasi-shuffle relation}

\subsubsection{Quasi-shuffle relation satisfied by the coefficients $\mathcal{B}$}

\begin{Lemma}
\label{lemma rough}(rough statement) The coefficients $\mathcal{B}$ (whose definition is reviewed in \S2.2.1) satisfy a form of the quasi-shuffle relation.
\end{Lemma}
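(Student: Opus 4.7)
The plan is to lift the classical quasi-shuffle relation satisfied by the localized multiple harmonic sums $\mathfrak{h}(-l_d,\ldots,-l_1)$ to an identity among their polynomial coefficients $\mathcal{B}$. I would first treat the case where $l_1,\ldots,l_d \geq 0$, in which $\mathfrak{h}(-l_d,\ldots,-l_1) : n \mapsto \sum_{0<n_1<\ldots<n_d<n} n_1^{l_1}\cdots n_d^{l_d}$ is a genuine polynomial in $n$ and $\mathcal{B}_b^{l_1,\ldots,l_d}$ is, by definition, the coefficient of $n^b$ in this polynomial expansion.

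In this polynomial regime, the product $\mathfrak{h}_n(-l_d,\ldots,-l_1) \cdot \mathfrak{h}_n(-m_{d'},\ldots,-m_1)$ satisfies the quasi-shuffle relation by exactly the same elementary combinatorics as for usual multiple harmonic sums: one decomposes the double sum according to all relative orderings of the union of indices in $\{1,\ldots,n-1\}$, the diagonal contributions $n_i = n'_j$ yielding the "collapse" terms in which the two exponents add to $l_i+m_j$. Iterating across depth gives
$$ \mathfrak{h}_n(-l_d,\ldots,-l_1) \cdot \mathfrak{h}_n(-m_{d'},\ldots,-m_1) = \sum_{w \in (-l_d,\ldots,-l_1) \ast (-m_{d'},\ldots,-m_1)} \mathfrak{h}_n(w), $$
where $\ast$ is the quasi-shuffle product in which collapses are additive.

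Substituting the polynomial expansion $\mathfrak{h}_n(-l_d,\ldots,-l_1) = \sum_b \mathcal{B}_b^{l_1,\ldots,l_d} n^b$ on both sides of this identity and equating coefficients of $n^b$ then yields the sought relation
$$ \sum_{b_1+b_2=b} \mathcal{B}_{b_1}^{l_1,\ldots,l_d} \mathcal{B}_{b_2}^{m_1,\ldots,m_{d'}} = \sum_{w \in (l_1,\ldots,l_d) \ast (m_1,\ldots,m_{d'})} \mathcal{B}_b^{w}, $$
which is a quasi-shuffle identity on the $\mathcal{B}$'s combined with the Cauchy product in the variable $b$. This is the form of the quasi-shuffle relation that will be needed in the rest of \S6 in order to transfer the quasi-shuffle relation through the explicit formulas for $\Sigma^{\RT}$ recalled in Examples 2.2.3.

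For the general case in which some $l_i$ may be negative, $\mathfrak{h}(-l_d,\ldots,-l_1)$ is only a $\mathbb{Q}$-linear combination of products of polynomials in $n$ with usual multiple harmonic sums, and $\mathcal{B}$ is defined as the coefficient attached to the trivial harmonic sum $1$. The quasi-shuffle identity for $\mathfrak{h}$ still holds at the level of functions of $n$; to obtain the corresponding identity for $\mathcal{B}$ one projects onto the purely polynomial component. The main obstacle is verifying that this projection commutes with the quasi-shuffle decomposition, and this is exactly where one invokes the linear independence of multiple harmonic sums as functions of $n$ over a suitable ring of analytic functions, in the spirit of Proposition \ref{freeness proposition}: this independence forces the purely polynomial parts of the two sides of the quasi-shuffle relation for $\mathfrak{h}$ to agree separately, giving the claimed quasi-shuffle identity for $\mathcal{B}$ in full generality.
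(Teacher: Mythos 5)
Your proposal is correct and follows essentially the same route as the paper's (very terse) proof: derive the quasi-shuffle relation for the localized multiple harmonic sums $\mathfrak{h}(-l_d,\ldots,-l_1)$ directly from their definition as iterated sums, extract the identity on the $\mathcal{B}$'s by comparing coefficients of $n^b$ via their definition, and invoke the linear independence of multiple harmonic sums (Proposition \ref{freeness proposition}) to isolate the purely polynomial component when some exponents are negative. Your explicit form of the resulting identity, a quasi-shuffle on the upper indices matched with a Cauchy product in $b$, agrees with the depth-$(1,1)$ instance the paper records in its Example just after the Lemma.
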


\begin{proof} This follows from the fact that localized multiple harmonic sums satisfied a form of the quasi-shuffle relation (this follows directly from their definition as iterated sums), from the property of linear independence of multiple harmonic sums (Proposition \ref{freeness proposition}), and from the definition of the coefficients $\mathcal{B}$.
\end{proof}

\noindent Let us sketch the proof of Theorem II-2.b.

\begin{proof} (sketch) Essentially, Theorem II-2.b is obtained by bringing together the explicit formula for $\Sigma^{\RT}$ arising from I, the Lemma \ref{lemma rough}, and certain properties of "symmetries" of $\circ_{\har}^{\RT}$ and $\Sigma^{\RT}$ with respect to operations such as permutation of variables.
\end{proof}

\noindent In order to clarify this sketch of proof, let us give an example :

\begin{Example} i) In depth (1,1), we have the following incarnation of the quasi-shuffle relation for the coefficients of the elimination of positive powers : $ \mathcal{B}_{b}^{l_{2}+l_{1}} + \mathcal{B}_{b}^{l_{2},l_{1}} + \mathcal{B}_{b}^{l_{1},l_{2}} = \sum_{\substack{b',b'' \geq 0 \\ b'+b''=b}} \mathcal{B}_{b'}^{l_{1}}\mathcal{B}_{b''}^{l_{2}}$, for $b,l_{1},l_{2} \in \mathbb{N}$ such that $1 \leq b \leq l_{1}+l_{2}+2$. 
\newline ii) In depth (1,1), we have the following property of "symmetry" of $\circ_{\har}^{\RT}$ with respect to an exchange of variables : in the equation (\ref{eq: har RT RT in depth 2}) of Example \ref{example of RT RT har}, (which expresses the formula for $\circ_{\har}^{\RT}$ in depth $2$), certain lines of the formula are unchanged, resp. multiplied by $-1$ when we exchange $s_{2}$ and $s_{1}$.
\newline iii) Looking at the formulas for $\Sigma^{\RT}$ in low depth (Example \ref{example of Sigma RT}) and i) and ii), we see that we retrieve the adjoint quasi-shuffle relation in depth $(1,1)$ for $\Phi_{p,\alpha}^{-1}e_{1}\Phi_{p,\alpha}$ (written in Example 5.1).
\end{Example}

\subsection{Other relations}

In the next version of this paper, we will treat the case of the  symmetry equation from II-1, the shuffle relation, and the prime harmonic duality relation.

\section{Reversibility of transfers of standard algebraic relations by $\Ad(e_{1})$ and $\Sigma_{\inv}^{\DR}$}

In this section we prove, independently from the previous section, partial converse results to the transfers of standard algebraic relations along the maps $\Sigma^{\DR}_{\inv}$ and $\Ad(e_{1})$, proven in II-1. We restrict to $\mathbb{P}^{1} - \{0,1,\infty\}$ (i.e. $N=1$) for simplicity.

\subsection{Preliminary}

The following is an extension of the Definition 1.3 of II-1 to words $w$ whose furthest to the right letter is not necessarily different from $e_{0}$.

\begin{Definition} \label{def all words harmonic}Let $\Lambda$ be a formal variable. For $w$ any word on $e_{Z}$, let :
$$ \har^{\Lambda}_{\mathcal{P}^{\mathbb{N}}}(w) = \bigg( (\Phi_{p,\alpha}^{-1}e_{1}\Phi_{p,\alpha}) \big[ \frac{\Lambda^{s_{d}+\ldots+s_{1}}}{1 - \Lambda e_{0}}e_{1}w \big] \bigg)_{p \in \mathcal{P},\alpha \in \mathbb{N}^{\ast}}
$$
\end{Definition}

\subsection{For the shuffle relation}

\begin{Lemma} \label{lemma extension harmonic shuffle} Let $f \in \DMR_{0}(K)$. Then the prime harmonic shuffle relation of $\DMR_{\har_{\mathcal{P}^{\mathbb{N}}}}$ from II-1 for the numbers $h(w) = (f^{-1}e_{1}f)[\frac{\Lambda^{s_{d}+\ldots+s_{1}}}{1-\Lambda e_{0}}e_{1}w]$, namely
\begin{equation} (f^{-1}e_{1}f)\bigg[\Ad_{\widehat{\tau(\Lambda)}}\big(\frac{\Sigma^{\DR}_{\inv}}{\Lambda}\big) \bigg( (e_{0}^{s-1}e_{z_{i}}w) \text{ }\sh\text{ } w') \bigg) \bigg] = (f^{-1}e_{1}f)\bigg[\Ad_{\widehat{\tau(\Lambda)}}\big(\frac{\Sigma^{\DR}_{\inv}}{\Lambda}\big)\bigg( w \text{ }\sh\text{ } \shft_{\ast}(e_{0}^{s-1}e_{z_{j}})w') \bigg) \bigg]
\end{equation}
\noindent (see \S3.1 of II-1 for the definition of the terms of the equations) remains true for the words of Definition \ref{def all words harmonic}.
\end{Lemma}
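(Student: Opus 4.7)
The plan is to prove the extension by induction on the total number of trailing $e_{0}$ letters appearing in the two argument words, with the base case (pairs whose rightmost letters both differ from $e_{0}$) being exactly the relation from II-1. Any word on $e_{Z}$ factors uniquely as $w = w_{0} e_{0}^{k}$ with $w_{0}$ empty or not ending in $e_{0}$, so the induction is well-founded.

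The main combinatorial tool I would use is the standard derivation property of the shuffle product with respect to right-concatenation by a letter: for any letter $a$ and any words $u, v$, one has $(u \sh v) a = (u a) \sh v + u \sh (v a)$. Taking $a = e_{0}$, this identity moves a trailing $e_{0}$ from inside the shuffle to outside, at the expense of splitting into two pairs, each having strictly fewer trailing $e_{0}$'s on one of the two sides.

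Next, I would establish two commutation relations. The first: the operator $\Ad_{\widehat{\tau(\Lambda)}}(\Sigma^{\DR}_{\inv}/\Lambda)$ commutes with right-concatenation by $e_{0}$ up to a scalar factor of $\Lambda$ arising from the weight-shift induced by $\tau(\Lambda)$, modulo correction terms coming from the action of $\Sigma^{\DR}_{\inv}$ on $e_{0}$ tails. The second: $\shft_{\ast}$ commutes with right-concatenation by $e_{0}$ in a similarly explicit way. A parallel statement is needed for the left multiplication by $\frac{1}{1-\Lambda e_{0}} e_{1}$ that enters the definition of $\har^{\Lambda}_{\mathcal{P}^{\mathbb{N}}}(w)$, for which the recursion $\frac{1}{1-\Lambda e_{0}} = 1 + \Lambda e_{0} \frac{1}{1-\Lambda e_{0}}$ will be the key. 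Combining these commutations with the derivation property of $\sh$, the shuffle identity applied to a pair with $k+1$ trailing $e_{0}$'s is expressed as a $\mathbb{Z}[\Lambda]$-linear combination of instances of the shuffle identity for pairs with at most $k$ trailing $e_{0}$'s, together with an overall application of right-concatenation by $e_{0}$ to the full identity at level $k$; all these instances hold by the inductive hypothesis.

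The main obstacle will be to verify the precise commutation between right-concatenation by $e_{0}$ and the operator $\Sigma^{\DR}_{\inv}$, whose definition relies on the non-trivial inversion formula from I-2, \S5, as well as its interaction with $\widehat{\tau(\Lambda)}$. Once these are unwound, the correction terms either cancel pairwise between the two sides of the identity, due to the symmetry built into $\shft_{\ast}$ (which was precisely designed for this purpose in II-1), or fall into the inductive hypothesis, thereby completing the induction.
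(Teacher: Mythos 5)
Your proposal takes a genuinely different route from the paper, but it contains a fatal error and an unresolved core step. The paper's own proof is a one-line observation: the proof of Proposition 4.10 in II-1 (which establishes the prime harmonic shuffle relation for words not ending in $e_{0}$) never actually uses that restriction on the words, so it applies verbatim to the words of Definition \ref{def all words harmonic}. In other words, the extension is obtained by re-running the \emph{proof} of the II-1 statement, not by deducing the extended statement from the restricted statement treated as a black box, which is what you attempt.

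The concrete error is your ``standard derivation property'' $(u \sh v)a = (ua)\sh v + u \sh (va)$. This identity is false: already for $u=v=\emptyset$ the left side is $a$ and the right side is $2a$, and for $u=x$, $v=y$, $a=z$ the right side contains the word $xzy$ which does not appear in $(x\sh y)z = xyz+yxz$. Right-concatenation by a letter is not a derivation of the shuffle product; the correct recursion is $ua \sh vb = (u\sh vb)a + (ua\sh v)b$, which peels the last letter off the \emph{product} and does not give you the clean reduction in the number of trailing $e_{0}$'s that your induction requires. Even setting this aside, the entire content of your argument is concentrated in the ``commutation relations'' between right-concatenation by $e_{0}$ and the operators $\Ad_{\widehat{\tau(\Lambda)}}(\Sigma^{\DR}_{\inv}/\Lambda)$, $\shft_{\ast}$, and left multiplication by $\frac{1}{1-\Lambda e_{0}}e_{1}$; you name this as ``the main obstacle'' and then assert without justification that the correction terms cancel or fall into the inductive hypothesis. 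Since $\Sigma^{\DR}_{\inv}$ acts precisely by manipulating $e_{0}$-tails, there is no reason to expect these corrections to be benign, and nothing in your outline verifies it. As written, the proposal does not constitute a proof.
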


\begin{proof} Follows directly from the proof of Proposition 4.10 in II-1, \S4.2.1.
\end{proof}

\noindent Let $\partial_{e_{1}},\tilde{\partial}_{e_{1}} : \mathcal{O}^{\sh} \rightarrow \mathcal{O}^{\sh}$ be the linear maps defined by $\partial_{e_{1}}(\emptyset)=\tilde{\partial}_{e_{1}}(\emptyset) = 0$ and by, for all words $w$,
 $\partial_{e_{1}}(e_{1}w)= w$,   
 $\partial_{e_{1}}(e_{0}w)= 0$, and
 $\tilde{\partial}_{e_{1}}(we_{1})= w$, $\tilde{\partial}_{e_{1}}(we_{0}) = 0$. Let $\mathcal{W}(\{e_{0},e_{1}\})$ be the set of words over the alphabet $\{e_{0},e_{1}\}$, which form a linear basis of $\mathcal{O}^{\sh}$.
\newline 
\newline We can now state the result of reversibility.

\begin{Proposition} \label{prop reversibility shuffle}Let $f$ be a point of $\Pi$ satisfying $f[e_{1}^{s}] = 0$ for all $s \in \mathbb{N}^{\ast}$.
\newline The following assertions are equivalent :
\newline a) $f$ satisfies the shuffle relation, i.e. $\Delta_{\sh}(f) = f \otimes f$.
\newline b) $f^{-1}e_{1}f$ satisfies the shuffle relation modulo products, i.e. $\Delta_{\sh}(f) = f \otimes 1 + 1 \otimes f$.
\newline c) The map $w \mapsto (f^{-1}e_{1}f)[\frac{1}{1-\Lambda e_{0}}e_{1}w]$, extended in the sense of Definition \ref{def all words harmonic}, satisfies the integral shuffle relation of $\DMR_{\har_{\mathcal{P}^{\mathbb{N}}}}$, extended in the sense of Lemma \ref{lemma extension harmonic shuffle}.
\newline c') The map $w \mapsto (f^{-1}e_{1}f)[\frac{1}{1-\Lambda e_{0}}e_{1}w]$ restricted to $\ker \tilde{\partial}_{e_{0}}$ satisfies the integral shuffle relation of  $\DMR_{\har_{\mathcal{P}^{\mathbb{N}}}}$ in the sense of II-1, and we have $(f^{-1}e_{1}f)[ w \text{ } \sh \text{ } e_{0} ] = 0$ for all words $w$.
\end{Proposition}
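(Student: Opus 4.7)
The plan is to prove the chain $(a)\Leftrightarrow(b)\Leftrightarrow(c)\Leftrightarrow(c')$, with $(a)\Leftrightarrow(b)$ as the Hopf-algebraic core and the equivalences $(b)\Leftrightarrow(c)$ and $(c)\Leftrightarrow(c')$ flowing respectively from Lemma \ref{lemma extension harmonic shuffle} and from elementary manipulations on the shuffle product. The role of the normalization $f[e_{1}^{s}]=0$ for $s\in\mathbb{N}^{\ast}$ is to pin $f$ down inside the fibre of the conjugation map $g\mapsto g^{-1}e_{1}g$, and it is what makes $(b)\Rightarrow(a)$ possible.

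For $(a)\Rightarrow(b)$ I would use the one-line computation: since the single letter $e_{1}$ is primitive for $\Delta_{\sh}$, conjugation by a group-like preserves primitiveness, i.e.
\[\Delta_{\sh}(f^{-1}e_{1}f)=(f\otimes f)^{-1}(e_{1}\otimes 1+1\otimes e_{1})(f\otimes f)=f^{-1}e_{1}f\otimes 1+1\otimes f^{-1}e_{1}f.\]
For $(b)\Rightarrow(a)$, the centralizer of $e_{1}$ in the free algebra $K\langle\langle e_{0},e_{1}\rangle\rangle$ equals $K\langle\langle e_{1}\rangle\rangle$, so two group-likes with the same conjugate of $e_{1}$ differ by an element of $K\langle\langle e_{1}\rangle\rangle$; reading off the coefficients of $e_{1}^{s}$ under the normalization forces this element to be $1$. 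Existence of a group-like preimage of the given primitive $f^{-1}e_{1}f$ is built weight-by-weight via the primitive/group-like correspondence on the free shuffle Hopf algebra, and uniqueness forces it to equal $f$.

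For $(b)\Leftrightarrow(c)$, the direct implication is exactly the content of Lemma \ref{lemma extension harmonic shuffle} (the extension of II-1, Proposition 4.10 to the words of Definition \ref{def all words harmonic}). For the converse, the extraction
\[w\ \mapsto\ (f^{-1}e_{1}f)\Big[\tfrac{1}{1-\Lambda e_{0}}e_{1}w\Big]\ =\ \sum_{b\geq 0}\Lambda^{b}\,(f^{-1}e_{1}f)[e_{0}^{b}e_{1}w]\]
recovers every coefficient of $f^{-1}e_{1}f$, because this element vanishes on every word lacking the letter $e_{1}$ while every word containing an $e_{1}$ factors uniquely as $e_{0}^{b}e_{1}w$; so (c) translates, coefficient-by-coefficient in $\Lambda$, into the primitiveness of $f^{-1}e_{1}f$. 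For $(c)\Leftrightarrow(c')$, the forward direction is by restriction, together with the observation that primitiveness combined with $(f^{-1}e_{1}f)[e_{0}^{k}]=0$ forces $(f^{-1}e_{1}f)[w\ \sh\ e_{0}]=0$; the reverse direction uses that every word factors uniquely as $u\cdot e_{0}^{k}$ with $u\in\ker\tilde{\partial}_{e_{0}}$, that a shuffle product $w\ \sh\ w'$ decomposes along this factorization, and that the extra vanishing covers exactly the trailing-$e_{0}$ contributions.

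The main obstacle is the converse $(b)\Rightarrow(a)$ in step one. The conjugation map $g\mapsto g^{-1}e_{1}g$ has a nontrivial kernel, so (b) on its own cannot characterize $f$; and even once the normalization is invoked, one must argue not merely that \emph{some} group-like $g$ satisfies $g^{-1}e_{1}g=f^{-1}e_{1}f$ but that this $g$ is the given $f$. This calls for a careful weight-by-weight induction using the primitive/group-like correspondence together with the full force of the hypothesis $f[e_{1}^{s}]=0$, and it is the step where the present proposition genuinely strengthens the transfer results of II-1.
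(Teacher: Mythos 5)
Your overall architecture (the chain a)$\Leftrightarrow$b)$\Leftrightarrow$c)$\Leftrightarrow$c'), with the normalization $f[e_{1}^{s}]=0$ used at the end to kill a residual $K\langle\langle e_{1}\rangle\rangle$-ambiguity) matches the paper, and your a)$\Rightarrow$b) is the same one-line conjugation computation. But the two steps you yourself flag as delicate are exactly where the sketch has genuine gaps.

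For b)$\Rightarrow$a): your plan is to produce a group-like $g$ with $g^{-1}e_{1}g=f^{-1}e_{1}f$ ``weight-by-weight via the primitive/group-like correspondence'' and then compare $f$ with $g$ through the centralizer of $e_{1}$. The comparison step is fine, but the existence step is not a routine induction: at stage $n$ the discrepancy $\delta_{n+1}$ between $f^{-1}e_{1}f$ and $g^{(n)-1}e_{1}g^{(n)}$ is a primitive element of weight $n+1$, and to correct $g^{(n)}$ by a group-like factor you must solve $\delta_{n+1}=[e_{1},x]$ with $x$ a Lie element of weight $n$. But $\mathrm{ad}_{e_{1}}$ is far from surjective onto primitives (already $[e_{0},[e_{0},e_{1}]]$ lies neither in $[e_{1},\Lie_{2}]$ nor even in $[e_{1},A_{2}]$ for $A$ the full tensor algebra), so primitiveness of $f^{-1}e_{1}f$ alone does not drive the induction. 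One must additionally exploit that $f^{-1}e_{1}f$ is conjugate to $e_{1}$ by an invertible, not necessarily group-like, series --- this places $\delta_{n+1}$ in $[e_{1},A_{n}]$ --- and then prove $[e_{1},A_{n}]\cap\Lie_{n+1}=[e_{1},\Lie_{n}]$. Neither ingredient appears in your sketch. The paper sidesteps all of this by working in the tensor square: it sets $u=\Delta_{\sh}(f)(f\otimes f)^{-1}$, observes that b) says precisely that $u$ commutes with $\Delta_{\sh}(e_{1})$, and proves by a direct induction with the truncation operators $\partial_{e_{1}},\tilde{\partial}_{e_{1}}$ that the commutant of $\Delta_{\sh}(e_{1})$ in $R\langle\langle e_{0},e_{1}\rangle\rangle^{\otimes 2}$ is $R\langle\langle e_{1}\rangle\rangle\otimes R\langle\langle e_{1}\rangle\rangle$; the normalization then forces $u=1\otimes1$. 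That argument is self-contained and avoids the surjectivity issue entirely.

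For c)$\Rightarrow$b): you argue that since $w\mapsto(f^{-1}e_{1}f)[\frac{1}{1-\Lambda e_{0}}e_{1}w]$ recovers every coefficient of $f^{-1}e_{1}f$, condition c) ``translates coefficient-by-coefficient into primitiveness.'' Injectivity of this reading map is not the issue. Condition c) imposes the vanishing of $f^{-1}e_{1}f$ on a specific family of linear combinations of shuffles (in the paper, $\sum_{s'=0}^{s-1}(e_{0}^{s'}e_{1}w)\ \sh\ (-1)^{s-s'}\frac{e_{0}^{s-1-s'}}{(1-\Lambda e_{0})^{s-s'}}e_{1}w'$), whereas b) is the vanishing on all $w\ \sh\ w'$ with $w,w'$ nonempty; the substantive point, which the paper establishes by extracting the coefficient of each $\Lambda^{l}$ and inducting on the index of nilpotence for $\partial_{e_{0}}$, is that the former combinations span the latter. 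Your sketch does not address this spanning step, and a similar bookkeeping is needed for your c)$\Leftrightarrow$c').
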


\begin{proof} Let us prove first a) $\Leftrightarrow$ b). First of all, b) is equivalent to saying that $\Delta_{\sh}(f)(f\otimes f)^{-1}$ commutes to $\Delta_{\sh}(e_{1})$. Thus the equivalence between a) and b) amounts to the following statement : let $u \in R\langle \langle e_{0},e_{1} \rangle\rangle \otimes R\langle \langle e_{0},e_{1} \rangle\rangle$ ; $u$ commutes to $\Delta_{\sh}(e_{1})$ if and only if
$u \in R\langle \langle e_{1} \rangle\rangle \otimes R\langle \langle e_{1} \rangle\rangle$.
\newline Let us prove it. For $u$ in $R\langle \langle e_{1} \rangle\rangle \otimes R\langle \langle e_{1} \rangle\rangle$, we have :
$$(\Delta_{\sh}(e_{1})u)[w\otimes w'] = u[\partial_{e_{1}}(w) \otimes w'] + u[w \otimes \partial_{e_{1}}(w')] $$
$$(u\Delta_{\sh}(e_{1}))[w\otimes w'] = u[\tilde{\partial}_{e_{1}}(w) \otimes w'] + u[w \otimes \tilde{\partial}_{e_{1}}(w')] $$
\noindent
Let $(w,w') \in \mathcal{W}(\{e_{0},e_{1}\}) \times  \mathcal{W}(\{e_{0},e_{1}\})$, with at least one among $w,w'$ not of the form $e_{1}^{l}$, $l\geq 0$ - we can assume that it is $w$ - we show that $u[w \otimes w'] = 0$.
$$ u[w \otimes w'] = u[\bar{\partial}_{e_{1}}(we_{1}) \otimes w'] $$
$$ =(u\Delta_{\sh}(e_{1}))[we_{1} \otimes w'] - u[we_{1} \otimes \tilde{\partial}_{e_{1}}(w')] = (\Delta_{\sh}(e_{1})u)[we_{1} \otimes w'] - u[we_{1} \otimes \tilde{\partial}_{e_{1}}(w')]$$
$$ = u[\partial_{e_{1}}(w)e_{1} \otimes w'] + u[we_{1} \otimes \partial_{e_{1}}(w')] - u[we_{1} \otimes \tilde{\partial}_{e_{1}}(w')] $$
\noindent Because of the hypothesis on $w$, the index of nilpotence for $\partial_{e_{i}}$ is strictly smaller for $\partial_{e_{1}}(w)e_{1}$ than for $w$. The result then follows by induction on $m+m'+m''$ where $m,n,m''$, are respectively the smallest integers satisfying :
$$ \partial_{e_{1}}^{m}(w)=0, \partial_{e_{1}}^{m'}(w')=0, (\tilde{\partial}_{e_{1}})^{m''}(w')=0 $$
\noindent Let us now prove b) $\Leftrightarrow$ c). In II-1, \S4, we have shown that, for all $w,w'$ words, and $s \in \mathbb{N}^{\ast}$, the following formal infinite sum of words :
$$ - \frac{1}{1- \Lambda e_{0}} e_{1} \bigg[ (e_{0}^{s-1}e_{1}w) \sh w' - w \sh \bigg( \frac{e_{0}^{s-1}e_{1}}{(1-\Lambda e_{0})^{s}}e_{1}w' \bigg) \bigg] $$
\noindent was a linear combination of shuffles, more precisely equal to 
$$ \sum_{s'=0}^{s-1} (e_{0}^{s'}e_{1}w) \text{ } \sh \text{ } (-1)^{s-s'} \frac{e_{0}^{s-1-s'}}{(1-\Lambda e_{0})^{s-s'}} e_{1} w' $$
\noindent Thus, the shuffle relation for $w \mapsto (f^{-1}e_{1}f) \big[\frac{1}{1-\Lambda e_{0}}e_{1}w\big]$ is true if and only if, for all $s \in \mathbb{N}$, for all $w,w'$ words, we have :
$$ (f^{-1}e_{1}f)\bigg[ \sum_{s'=0}^{s-1} (e_{0}^{s'}e_{1}w) \text{ } \sh \text{ } (-1)^{s-s'} \frac{e_{0}^{s-1-s'}}{(1-\Lambda e_{0})^{s-s'}} e_{1}w' \bigg] = 0 $$
\noindent It remains to show that these linear combinations of shuffles generate all the possible shuffles, knowing that b) is equivalent to $(f^{-1}e_{1}f)[w\text{ } \sh\text{ }w']=0$ for all non-empty words $w,w'$. For each $l \in \mathbb{N}$, the coefficient of $\Lambda^{l}$ in this linear combination is of the form :
\newline 
$( e_{0}^{s-1}e_{1}w)\text{ } \sh\text{ } (e_{0}^{l}e_{1}w') + \sum_{0\leq s'<s} c_{s'} ( e_{0}^{s'-1}e_{1}w) \text{ }\sh\text{ } (e_{0}^{l+s-s'}e_{1}w')= 0$ with $c_{s'} \in \mathbb{Q}$. By induction on the index of nilpotence of $z$ for $\partial_{e_{0}}$. This shows that for all $z,z' \in \ker \tilde{\partial}_{e_{1}}$, $z\text{ } \sh\text{ } z'$ is a linear combination of the shuffles of the statement.
\end{proof}

\subsection{Interpretation of \S7.2 : the prime harmonic De Rham pro-unipotent fundamental groupoid of $\mathbb{P}^{1} - \{0,1,\infty\}$} We use again the notation $\Pi_{1,0} = \pi_{1}^{\un,\DR}(\mathbb{P}^{1} - \{0,1,\infty\},-\vec{1}_{1},\vec{1}_{0})$.

\begin{Definition} \label{la def 1}
We denote by $\Pi_{1,0}^{\har_{\mathcal{P}^{\mathbb{N}}}} = (\Sigma_{\inv}^{\DR} \circ \Ad_{}(e_{1}))(\Pi_{1,0})$ and we call it the "prime harmonic $\Pi_{1,0}$".
\end{Definition}

\noindent Since all fibers of the groupoid $\pi_{1}^{\un,\DR}(\mathbb{P}^{1} - \{0,1,\infty\})$ are isomorphic to the same scheme $\Pi = \Spec(\mathcal{O}^{\sh,\{e_{0},e_{1}\}})$, compatibly with the groupoid structure, this definition extends to all base-points :

\begin{Definition} \label{la def 2}
We denote by $\pi_{1}^{\un,\DR_{\har_{\mathcal{P}^{\mathbb{N}}}}}(\mathbb{P}^{1} - \{0,1,\infty\}) = (\Sigma_{\inv}^{\DR} \circ \Ad(e_{1}))(\pi_{1}^{\un,\DR}(\mathbb{P}^{1} - \{0,1,\infty\}))$ and we call it the "prime harmonic  $\pi_{1}^{\un,\DR}(\mathbb{P}^{1} - \{0,1,\infty\})$".
\end{Definition}

\noindent It is a groupoid of pro-affine schemes over $\mathbb{P}^{1} - \{0,1,\infty\}$ and its tangential base-points, and is a receptacle for non-commutative generating series of sequences of prime weighted multiple harmonic sums. This makes more natural the object $\mathbb{Q}_{p}\langle \langle e_{0},e_{1}\rangle\rangle_{\har}$ defined in I-2 and I-3 and used since then.

\begin{Interpretation}
Let us recall that $\Pi_{1,0}$, and thus $\pi_{1}^{\un,\DR}(\mathbb{P}^{1} - \{0,1,\infty\})$, is defined by the shuffle equation. Thus, the Proposition \ref{prop reversibility shuffle} amounts then to saying that $\pi_{1}^{\un,\DR_{\har_{\mathcal{P}^{\mathbb{N}}}}}(\mathbb{P}^{1} - \{0,1,\infty\})$ can be defined by equations ; more precisely by the prime harmonic shuffle equation of II-1. This property is actually the main reason for stating Definition \ref{la def 1} and Definition \ref{la def 2} ; otherwise, the analogy between $\Pi_{1,0}$ and $(\Sigma_{\inv}^{\DR}\circ \Ad(e_{1}))(\Pi_{1,0})$ would be too weak to justify the notation $\Pi_{1,0}^{\har_{\mathcal{P}^{\mathbb{N}}}} = (\Sigma_{\inv}^{\DR} \circ \Ad(e_{1}))(\Pi_{1,0})$.
\end{Interpretation}

\begin{Comment} In I-2 and I-3 we have transferred the Ihara product on $\Pi_{1,0}$ along the maps $\Ad(e_{1})$ and $\Sigma_{\inv}^{\DR}$ (and this gave the adjoint Ihara product $\circ_{\Ad}^{\DR}$, and the harmonic Ihara action $\circ^{\DR}_{\har}$) ; in I-3 we have transfered $\hat{U} \Lie^{\vee}(\Pi_{1,0}) \otimes \mathbb{Q}_{p}$ along $\Sigma^{\DR}_{\inv}$, and this gave $\mathbb{Q}_{p}\langle \langle e_{0},e_{1}\rangle\rangle_{\har}^{\DR}$ ; in II-1 we have transferred algebraic relations along the maps $\Ad(e_{1})$ and $\Sigma_{\inv}^{\DR}$ ; here, we see with Definition \ref{la def 1} and Definition \ref{la def 2} that we can actually transfer the groupoid $\pi_{1}^{\un,\DR}(\mathbb{P}^{1} - \{0,1,\infty\})$ itself along the maps $\Ad(e_{1})$ and $\Sigma_{\inv}^{\DR}$. This assembles all the previous constructions in a certain sense, and thus makes them clearer.
\end{Comment}

\subsection{For the quasi-shuffle relation}

\begin{Proposition} 
i) Let $f$ a point of $\Pi$. Then $f$ satisfies the quasi-shuffle relation in depth $(1,1)$ if and only if $f^{-1}e_{1}f$ satisfies the adjoint quasi-shuffle relation of  $\DMR_{0,\Ad}$ in depth $(1,1)$.
\newline ii) Let $h \in \mathbb{Q}_{p}\langle\langle e_{0},e_{1}\rangle\rangle$. Then $h$ satisfies the adjoint quasi-shuffle relation of $\DMR_{0,\Ad}$ if and only if the map $w \mapsto h[\frac{1}{1-\Lambda e_{0}}e_{1}w]$ satisfies the prime harmonic quasi-shuffle relation of $\DMR_{\har_{\mathcal{P}^{\mathbb{N}}}}$ (which is actually equal to the usual quasi-shuffle relation : $h(w)h(w') = h(w \ast w')$ for all words $w,w'$).
\end{Proposition}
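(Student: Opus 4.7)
My plan is to handle part (ii) first, since it is essentially a formal identity in the auxiliary variable $\Lambda$, and then to treat part (i) by an explicit computation in depth $(1,1)$ that exploits the shuffle relations satisfied by $f$.

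For part (ii), set $\tilde{h}(w) := h\bigl[\frac{1}{1-\Lambda e_{0}}e_{1}w\bigr] = \sum_{b \geq 0} \Lambda^{b}\,h[e_{0}^{b}e_{1}w]$, viewed as a formal power series in $\Lambda$ with coefficients in $\mathbb{Q}_{p}$. Expanding the usual quasi-shuffle identity $\tilde{h}(w)\tilde{h}(w') = \tilde{h}(w \ast w')$ in $\Lambda$, the coefficient of $\Lambda^{b}$ reads
\[ \sum_{\substack{b',b'' \geq 0 \\ b'+b'' = b}} h[e_{0}^{b'}e_{1}w]\,h[e_{0}^{b''}e_{1}w'] \;=\; h\bigl[e_{0}^{b}e_{1}\,(w \ast w')\bigr], \]
which is precisely the pattern of the adjoint quasi-shuffle relation displayed in Example 5.1 (i). The two relations are therefore term-by-term equivalent, and part (ii) follows without further work.

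For part (i), the implication ``$f$ quasi-shuffle $\Rightarrow$ $f^{-1}e_{1}f$ adjoint quasi-shuffle'' in depth $(1,1)$ was already established in II-1 by transfer along $\Ad(e_{1})$, so the content is the converse. Writing $F := f^{-1}e_{1}f$ and $w = u_{1}\cdots u_{k}$, the concatenation product in $\mathbb{Q}_{p}\langle\langle e_{0},e_{1}\rangle\rangle$ gives
\[ F[w] \;=\; \sum_{i\,:\,u_{i}=e_{1}} f^{-1}[u_{1}\cdots u_{i-1}]\,f[u_{i+1}\cdots u_{k}]. \]
Since $f$ is a point of $\Pi_{1,0}$, it is group-like for the shuffle Hopf structure, so its inverse in the non-commutative algebra is the antipode image: $f^{-1}[w] = (-1)^{|w|} f[\tilde{w}]$, where $\tilde{w}$ denotes the reversal of $w$. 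Combined with the shuffle identities $f[e_{0}]^{r} = r!\,f[e_{0}^{r}]$ and $f[e_{0}^{a}]\,f[e_{0}^{b-1}e_{1}] = f[e_{0}^{a} \sh e_{0}^{b-1}e_{1}]$, every coefficient of $F$ appearing in the depth-$(1,1)$ adjoint quasi-shuffle relation becomes an explicit polynomial in the coefficients of $f$.

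I would then specialize the adjoint quasi-shuffle relation at $b = 0$ to the three words $e_{1}e_{0}^{s_{2}-1}e_{1}e_{0}^{s_{1}-1}e_{1}$, $e_{1}e_{0}^{s_{1}-1}e_{1}e_{0}^{s_{2}-1}e_{1}$, $e_{1}e_{0}^{s_{1}+s_{2}-1}e_{1}$ and the product on the right-hand side; after the $f^{-1}$-prefactors with non-empty $e_{0}$-prefixes are cancelled against their reversals using the shuffle relation, the identity collapses to
\[ f[e_{0}^{s_{1}-1}e_{1}]\,f[e_{0}^{s_{2}-1}e_{1}] = f[e_{0}^{s_{1}-1}e_{1}e_{0}^{s_{2}-1}e_{1}] + f[e_{0}^{s_{2}-1}e_{1}e_{0}^{s_{1}-1}e_{1}] + f[e_{0}^{s_{1}+s_{2}-1}e_{1}], \]
which is exactly the depth-$(1,1)$ quasi-shuffle relation on $f$. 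Running the same chain of equalities in reverse also reproduces the already-known forward direction, confirming the stated equivalence. The main obstacle I anticipate is the combinatorial bookkeeping at the expansion step: because the adjoint relation mixes three depth-three coefficients of $F$ against a depth-$(1,1)$ product, and each of those expands into several terms via the sum over positions of $e_{1}$, one must check that the ``mixed'' contributions involving $f^{-1}$ on a non-empty prefix cancel exactly in the prescribed combination. Specializing first to $b = 0$ to kill extraneous $e_{0}$-prefixes, and passing to the $y$-alphabet notation to suppress the $e_{0}$-bookkeeping, is the cleanest way I see to organize this verification.
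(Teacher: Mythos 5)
Your part (ii) is fine and is essentially what the paper's one-line proof (``clear by the definitions'') amounts to: the prime harmonic quasi-shuffle relation is the generating-series-in-$\Lambda$ form of the adjoint one, and matching coefficients of $\Lambda^{b}$ gives the term-by-term equivalence; the weight factor $\Lambda^{\weight(w)}$ appearing in Definition \ref{def all words harmonic} is harmless because the quasi-shuffle product is weight-homogeneous.

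Part (i), however, has a genuine gap in the converse direction. Set $F=f^{-1}e_{1}f$, write $y_{s}=e_{0}^{s-1}e_{1}$, and use, as you propose, $f^{-1}[w]=(-1)^{|w|}f[\tilde{w}]$ for group-like $f$. The $b=0$ instance of the adjoint relation can then be computed in closed form:
\[ F[e_{1}y_{s_{2}}y_{s_{1}}]=f[y_{s_{2}}y_{s_{1}}]+(-1)^{s_{2}}f[y_{s_{2}}]f[y_{s_{1}}]+(-1)^{s_{1}+s_{2}}f[y_{s_{1}}y_{s_{2}}],\qquad F[e_{1}y_{s}]=\big(1+(-1)^{s}\big)f[y_{s}], \]
and summing the three left-hand terms and subtracting $F[e_{1}y_{s_{1}}]F[e_{1}y_{s_{2}}]$ yields
\[ \big(1+(-1)^{s_{1}+s_{2}}\big)\Big(f[y_{s_{1}}y_{s_{2}}]+f[y_{s_{2}}y_{s_{1}}]+f[y_{s_{1}+s_{2}}]-f[y_{s_{1}}]f[y_{s_{2}}]\Big). \]
So the $b=0$ relation does \emph{not} collapse to the depth-$(1,1)$ quasi-shuffle relation for $f$: it is that relation multiplied by $1+(-1)^{s_{1}+s_{2}}$, hence identically $0=0$ and devoid of information whenever $s_{1}+s_{2}$ is odd. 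To prove the converse you must also use the instances with $b\geq 1$, which bring in the regularized coefficients $f^{-1}[e_{0}^{b}\cdots]$, and combine them across pairs $(s_{1},s_{2})$; this is precisely the ``induction on $(s,t)$'' that the paper's proof invokes and that your argument replaces by an unjustified (and, for odd total weight, false) cancellation claim. A secondary point to check: the forward implication you import from II-1 is the transfer of the full quasi-shuffle relation, whereas the statement here is a depth-$(1,1)$-to-depth-$(1,1)$ equivalence, so you should verify that the transfer argument is depth-graded before quoting it.
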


\begin{proof} i) This follows by an easy induction on $(s,t) \in (\mathbb{N}^{\ast})^{2}$ when considering words $e_{0}^{s-1}e_{1}$ and $e_{0}^{t-1}e_{1}$.
\newline ii) Clear by the definitions from II-1.
\end{proof}

\subsection{For the prime harmonic duality equation}

The reversibility of the passage between certain associator equations and certain of their "adjoint" counterparts is treated in another paper on associators \cite{J13}. We only consider the  reversibility of the passage from the adjoint setting to the "prime harmonic" setting from II-1.

\begin{Proposition} Let $f$ be a point of $\Pi$, and $e_{\infty} = - e_{0} - e_{1}$. Then $f^{-1}e_{1}f$ satisfies the equation $e_{0} + (f^{-1}e_{1}f)(e_{0},e_{1}) + (f^{-1}e_{1}f)(e_{0},e_{\infty})=0$ (the degenerated version of the equation of special automorphisms) if and only if the map $w \mapsto (-1)^{\depth(w)}(f^{-1}e_{1}f)[\frac{1}{1-\Lambda e_{0}}e_{1}w]$ satisfies the "prime harmonic" duality equation of Theorem II-1.b, namely , for all words $w$, we have :
	$$ h( w(e_{0}+e_{1},-e_{1})) = - \sum_{\substack{d'\geq 1 \\ 
			z = e_{0}^{t_{d'}-1}e_{1}\ldots e_{0}^{t_{1}-1}e_{1}}} (-1)^{d'} h(z.w) $$
\end{Proposition}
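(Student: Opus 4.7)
The plan is to translate both conditions into parallel families of linear constraints on the coefficients of $A := f^{-1}e_1 f$ and then match them via a short formal power series identity, in parallel with the style of the previous propositions of this section. Let $\rho$ be the continuous involutive algebra automorphism of $K\langle\langle e_0, e_1 \rangle\rangle$ with $\rho(e_0) = e_0$, $\rho(e_1) = e_\infty$, and let $\sigma$ be the involution $e_0 \mapsto e_0+e_1$, $e_1 \mapsto -e_1$; these encode the substitutions appearing in equations I and II respectively.

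First, I would unpack the degenerate special automorphism equation $\rho(A) = -e_0 - A$. A letter-by-letter computation gives $\rho(v) = (-1)^{\depth(v)} \sum_{w \preceq v} w$, where $w \preceq v$ means $|w| = |v|$ and the $e_1$'s of $w$ lie only at $e_1$-positions of $v$. Extracting the coefficient of each word $w$ yields the family
\begin{equation*}
A[w] + \sum_{v \succeq w} (-1)^{\depth(v)} A[v] = -\delta_{w, e_0}.
\end{equation*}

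Second, I would unpack the prime harmonic duality equation. Set $v_\Lambda := (1 - \Lambda e_0)^{-1} e_1$ and $u := (1-e_0)^{-1} e_1$. A sign count using $\depth(w^{(S)}) = \depth(w) + |S|$, where $w^{(S)}$ denotes the word obtained from $w$ by replacing the $e_0$'s at positions $S$ by $e_1$, collapses the $(-1)^{\depth}$ factors in $h(\sigma(w))$, giving
\begin{equation*}
h(\sigma(w)) = A[v_\Lambda \bar\sigma(w)], \qquad \bar\sigma\colon e_0 \mapsto e_0 - e_1,\; e_1 \mapsto e_1.
\end{equation*}
The geometric series $\sum_{d' \geq 1} u^{d'} = u(1-u)^{-1}$, combined with $\depth(z.w) = \depth(z) + \depth(w)$, then gives $-\sum_{d', z} (-1)^{d'} h(z.w) = -(-1)^{\depth(w)} A[v_\Lambda u(1-u)^{-1} w]$, so equation II is equivalent to
\begin{equation*}
A\bigl[v_\Lambda \bigl(\bar\sigma(w) + (-1)^{\depth(w)} u(1-u)^{-1} w\bigr)\bigr] = 0 \quad \text{for every word } w.
\end{equation*}

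Third, I would match the two families via the algebraic identity $(1-e_0)(1-u) = 1 + e_\infty$, which gives $u(1-u)^{-1} = (1+e_\infty)^{-1} e_1$ and makes the presence of $e_\infty$ in equation II manifest. Combined with the bijection $(l, w) \leftrightarrow e_0^l e_1 w$ between $\mathbb{N} \times \{e_0, e_1\}^*$ and words containing at least one $e_1$ (the support of $A$), the $\Lambda$-expansion of equation II reassembles into the family of step 1 restricted to words containing $e_1$; the one remaining relation of step 1, for $w = e_0$, is the normalization $A[e_1] = 1$, which is itself forced by equation II at $w = \emptyset$, $\Lambda = 0$. The main obstacle is this matching: one must verify that the depth-weights $(-1)^{\depth(v)}$ in step 1 correspond exactly to those arising in the $v_\Lambda$-prefixed expansion of step 2, a combinatorial identity best handled by induction on the depth of $w$.
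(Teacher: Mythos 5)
Your steps 1--3 are correct and more explicit than anything the paper offers (its proof is a one-line reduction to the pattern of \S 7.2): the identity $h(\sigma(w)) = A[v_{\Lambda}\bar\sigma(w)]$, the closed form $-\sum_{d',z}(-1)^{d'}h(z.w) = -(-1)^{\depth(w)}A[v_{\Lambda}u(1-u)^{-1}w]$, and the factorization $u(1-u)^{-1}=(1+e_{\infty})^{-1}e_{1}$ all check out, and the last one nicely explains where $e_{\infty}$ comes from. But the proof stops exactly where the content of the proposition lies. The claimed matching via the bijection $(l,w)\leftrightarrow e_{0}^{l}e_{1}w$ cannot work term by term: take $w=\emptyset$ and a fixed $l$. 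Equation II gives $A[e_{0}^{l}e_{1}]+\sum_{x}A[e_{0}^{l}e_{1}xe_{1}]=0$, a sum over words of \emph{unbounded} length with all coefficients $+1$, while your step-1 relation at $e_{0}^{l}e_{1}$ is $A[e_{0}^{l}e_{1}]+\sum_{|y|=l}(-1)^{\depth(y)+1}A[ye_{1}]=0$, a signed sum over words of length exactly $l+1$ in which the prefix $e_{0}^{l}$ is allowed to mutate. These are different linear forms, so what has to be proved is that the two families span the same space of constraints; that is a genuine argument (an explicit triangular change of basis, or the ``these combinations generate all of them'' induction the paper runs for the shuffle case in Proposition \ref{prop reversibility shuffle}), not a re-indexing, and you have only asserted it.

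There is a second, related gap: condition I also produces the relations at $w'=e_{0}^{m}$, namely $\sum_{|v|=m}(-1)^{\depth(v)}A[v]=-\delta_{m,1}$, and for $m\geq 2$ these are nontrivial (for $m=2$: $A[e_{0}e_{1}]+A[e_{1}e_{0}]=A[e_{1}e_{1}]$) yet involve no word of the form $e_{0}^{l}e_{1}w$ as an index, so they are invisible to equation II under your dictionary; you only account for $w'=e_{0}$. They appear to be recoverable, but only by invoking the hypothesis $f\in\Pi$: group-likeness of $f$ makes $A=f^{-1}e_{1}f$ primitive for $\Delta_{\sh}$, whence e.g.\ $A[e_{0}\,\sh\,e_{1}]=0$ kills the $m=2$ relation. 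Your argument never uses that $f$ is a point of $\Pi$, which is a warning sign that the equivalence of the two linear systems is being asserted in more generality than it holds. To complete the proof you need (i) an actual derivation of each family of constraints from the other, in the spirit of the b) $\Leftrightarrow$ c) step of Proposition \ref{prop reversibility shuffle}, and (ii) an explicit use of the shuffle relation for $f$ to dispose of the $e_{0}^{m}$ relations.
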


\begin{proof} Clear modulo the use of the Definition \ref{def all words harmonic} and Lemma \ref{lemma extension harmonic shuffle}, as for the case of the shuffle equation treated in \S7.2.
\end{proof}

\section{Algebraic properties of overconvergent $p$-adic hyperlogarithms $\Li_{p,\alpha}^{\dagger}$ and explicitness}

\subsection{Review of definitions}

Let $\Li_{p,\alpha}^{\dagger}$ the overconvergent variant of hyperlogarithms from part I ; they are defined as $\Li_{p,\alpha}^{\dagger}(z) = \mathcal{F}_{\ast}({}_z 1_{0})$ on the rigid analytic affinoid space $U_{0\infty}^{\an} = ( \mathbb{P}^{1,an} - \cup_{i=1}^{N}]z_{i}[ )/ K$, where $\mathcal{F}_{\ast}$ is the Frobenius of $\pi_{1}^{\un,\DR}(X_{K})$. They are non-commutative generating series of overconvergent analytic functions on  $U_{0\infty}^{\an}$.
They are characterized explicitly in terms of multiple harmonic sums and cyclotomic $p$-adic multiple zeta values by

\begin{multline} \label{eq:horizontality}
\Li_{p,\alpha}^{\dagger}(z)(e_{0},e_{z_{1}},\ldots,e_{z_{N}})\times\Li_{p,X_{K}^{(p^{\alpha})}}^{\KZ}(z^{p^{\alpha}})\big(e_{0},{\Phi^{(z_{1})}_{p,\alpha}}^{-1}e_{z_{1}}\Phi^{(z_{1})}_{p,\alpha},\ldots,{\Phi^{(z_{N})}_{p,\alpha}}^{-1}e_{z_{N}}\Phi^{(z_{N})}_{p,\alpha} \big) 
\\ = \Li_{p,X_{K}}^{\KZ}(z)(p^{\alpha}e_{0},p^{\alpha}e_{z_{1}},\ldots,p^{\alpha}e_{z_{N}})
\end{multline}

\noindent where $ \Li_{p,X_{K}}^{\KZ}$, resp. $\Li_{p,X_{K}^{(p^{\alpha})}}^{\KZ}$ is a certain solution of $\nabla_{\KZ}$ on $X_{K}$, resp. $\nabla_{\KZ}^{(p^{\alpha})}$ on $X_{K}^{(p^{\alpha})}$, whose power series expansion can be expressed in terms of multiple harmonic sum.
\newline 
\newline We stated in I-1 the following terminology and notations.

\begin{Definition} For all words $w$ and $n \in \mathbb{N}$, let $\har^{\dagger_{p,\alpha}}_{n}(w) =  n^{\weight(w)}\Li_{p,\alpha}^{\dagger}[w][z^{n}]$ (i.e. the coefficient of degree $n$ of the series expansion at $0$ of $\Li_{p,\alpha}^{\dagger}[w]$ multiplied by $n^{\weight(w)}$). We call $\har^{\dagger_{p,\alpha}}_{n}$ the multiple harmonic sums regularized by Frobenius.
\end{Definition}

\noindent In part I, we have developed the analogy between $\har_{n}^{\dagger_{p,\alpha}}$ and $\har_{n}$, which is suggested by this notation and this terminology, only on the analytic side, where both these numbers are viewed as functions of $n$ viewed as a $p$-adic integer. In II-1, we have developed an analogy between the two on the algebraic side for the restrictions to $n=p^{\alpha}$, and we have seen the close proximity between the two objects. Here, let us develop the analogy on the algebraic side, for the properties valid for all $n \in \mathbb{N}^{\ast}$.

\subsection{The shuffle relation for $\Li_{p,\alpha}^{\dagger}$ and explicitness}

By their definition, the overconvergent $p$-adic hyperlogarithms satisfy the shuffle equation : for all words $w,w'$ :
\begin{equation} \label{eq:shuffle overconvergent}
\Li_{p,\alpha}^{\dagger}[w] \Li_{p,\alpha}^{\dagger}[w'] = \Li_{p,\alpha}^{\dagger}[w\text{ }\sh\text{ }w']
\end{equation}

\noindent In other terms, since $\har_{0}^{\dagger_{p,\alpha}}=0$, this means, for all $n \in \mathbb{N}^{\ast}$ and all words $w,w'$ :
\begin{equation} 
\label{eq:shuffle overconvergent Taylor} \sum_{m=1}^{n-1}\har_{m}^{\dagger_{p,\alpha}}(w)\har_{n-m}^{\dagger_{p,\alpha}}(w') = \har_{n}^{\dagger_{p,\alpha}}(w\text{ }\sh\text{ }w')
\end{equation}

\noindent The equation (\ref{eq:shuffle overconvergent Taylor}) is the common point between $\har$ and $\har^{\dagger_{p,\alpha}}$. Following the problematic of this part II, one can ask how to retrieve (\ref{eq:shuffle overconvergent}) and (\ref{eq:shuffle overconvergent Taylor}) via explicit formulas. The problem for (\ref{eq:shuffle overconvergent}) and the problem for (\ref{eq:shuffle overconvergent Taylor}) are equivalent to each other, since we deal with analytic functions on an affinoid rigid analytic space.
\newline 
\newline If we want to do it, we must use an explicit formula for $\har_{n}^{\dagger_{p,\alpha}}$. The simplest such formula is actually the one directly given by equation (\ref{eq:horizontality}), combined to our explicit formulas for $\Phi_{p,\alpha}$ from I-2. 
\newline The Frobenius is an automorphism of the fundamental group, and we have :

\begin{Fact} If $L(z)$, $\tilde{L}(z^{p^{\alpha}})$ and $h$ are points of $\pi_{1}^{\un,\DR}(X_{K},\omega_{\DR})$, i.e. elements of $K\langle\langle e_{Z}\rangle\rangle$ satisfying the shuffle relation, if $h \in K\langle\langle e_{Z}\rangle\rangle$ satisfies the shuffle relation modulo product, and if $h^{(z_{i})} = (x \mapsto z_{i}x)_{\ast}(h)$, then 
	$$ L(z) \tilde{L}(z^{p^{\alpha}})(e_{0},{h^{(z_{1})}}^{-1}e_{z_{1}}h^{(z_{1})},\ldots,{h^{(z_{N})}}^{-1}e_{z_{N}}h^{(z_{N})})^{-1} $$
\noindent satisfies the shuffle relation.
\end{Fact}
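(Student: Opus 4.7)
The plan is to reduce the shuffle relation for the displayed product to the preservation of group-likeness under a concatenation-algebra substitution. First, I would recall that the elements of $K\langle\langle e_{Z}\rangle\rangle$ satisfying the shuffle relation, i.e.\ the group-likes for $\Delta_{\sh}$, form a group under concatenation: they are stable under product and under inverse. Consequently, it suffices to show that the substituted series
\[
M := \tilde{L}(z^{p^{\alpha}})\bigl(e_{0},\, h^{(z_{1})^{-1}} e_{z_{1}} h^{(z_{1})},\, \ldots,\, h^{(z_{N})^{-1}} e_{z_{N}} h^{(z_{N})}\bigr)
\]
satisfies the shuffle relation, since $L(z)$ does by hypothesis.

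Next, I would introduce the continuous concatenation-algebra endomorphism $\sigma_{h}$ of $K\langle\langle e_{Z}\rangle\rangle$ defined by $\sigma_{h}(e_{0}) = e_{0}$ and $\sigma_{h}(e_{z_{i}}) = h^{(z_{i})^{-1}} e_{z_{i}} h^{(z_{i})}$, so that $M = \sigma_{h}(\tilde{L}(z^{p^{\alpha}}))$. Since $\tilde{L}(z^{p^{\alpha}})$ is group-like and $\sigma_{h}$ is already a morphism for concatenation, the task reduces to showing that $\sigma_{h}$ commutes with $\Delta_{\sh}$. Because $\Delta_{\sh}$ is an algebra morphism for concatenation and the letters of $e_{Z}$ topologically generate the algebra, this amounts to a pointwise check: each $\sigma_{h}(e_{z_{i}}) = h^{(z_{i})^{-1}} e_{z_{i}} h^{(z_{i})}$ must be primitive for $\Delta_{\sh}$. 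This primitivity is exactly the content of the hypothesis that $h$ satisfies the shuffle relation modulo products, transported to the appropriate tangential base-point.

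The main --- but mild --- obstacle will be the primitivity computation itself, which is the same identity that underlies the classical fact that the Ihara product of two group-likes is group-like: expanding $\Delta_{\sh}$ via its concatenation-algebra-morphism property and using $h^{(z_{i})^{-1}} h^{(z_{i})} = 1$ yields directly the desired coproduct formula $\sigma_{h}(e_{z_{i}})\otimes 1 + 1 \otimes \sigma_{h}(e_{z_{i}})$. The only additional ingredient to verify is that $h^{(z_{i})} = (x\mapsto z_{i}x)_{\ast}(h_{z_{N}})$ inherits the relevant shuffle-type property from $h$, which follows from the functoriality of $\pi_{1}^{\un,\DR}(X_{K},\omega_{\DR})$ under the $\mu_{N}$-action on $X_{K}$ and from the fact that this action is compatible with the Hopf structure on $\mathcal{O}(\pi_{1}^{\un,\DR}(X_{K},\omega_{\DR}))$.
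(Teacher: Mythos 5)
Your argument is essentially the right one, and it is the argument the paper is implicitly invoking (the paper offers no proof of this Fact beyond the remark that the Frobenius is an automorphism of the fundamental group): reduce to showing that the substitution endomorphism $\sigma_{h}$ commutes with $\Delta_{\sh}$ and hence preserves group-likeness, then use that the group-like elements form a group under concatenation so that the outer product by $L(z)$ and the inversion are harmless. One correction is needed, however. The primitivity of $\sigma_{h}(e_{z_{i}})={h^{(z_{i})}}^{-1}e_{z_{i}}h^{(z_{i})}$ is \emph{not} ``the content of the hypothesis that $h$ satisfies the shuffle relation modulo products.'' That hypothesis, in the paper's own usage (see the equivalence a) $\Leftrightarrow$ b) in \S 7.2), means $\Delta_{\sh}(h)=h\otimes 1+1\otimes h$, i.e. that $h$ itself is primitive; a primitive series has vanishing constant term and is not invertible, so ${h^{(z_{i})}}^{-1}$ would not even be defined under that reading. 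What your computation actually uses is the group-likeness of $h$ (the hypothesis that $h$ is a point of $\pi_{1}^{\un,\DR}(X_{K},\omega_{\DR})$): from $\Delta_{\sh}(h^{(z_{i})})=h^{(z_{i})}\otimes h^{(z_{i})}$ one gets $\Delta_{\sh}\big({h^{(z_{i})}}^{-1}e_{z_{i}}h^{(z_{i})}\big)=(h^{(z_{i})}\otimes h^{(z_{i})})^{-1}(e_{z_{i}}\otimes 1+1\otimes e_{z_{i}})(h^{(z_{i})}\otimes h^{(z_{i})})={h^{(z_{i})}}^{-1}e_{z_{i}}h^{(z_{i})}\otimes 1+1\otimes {h^{(z_{i})}}^{-1}e_{z_{i}}h^{(z_{i})}$, which is the classical Ihara-twist identity you cite. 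The statement's hypothesis list is itself redundant on this point (it asks $h$ to be both a point and a solution of the shuffle relation modulo products, and in the intended application $h=\Phi_{p,\alpha}$ is group-like); you should resolve that ambiguity in favour of group-likeness rather than lean on the ``modulo products'' clause. The remaining steps --- stability of group-likes under product and inverse, the trivial check on $e_{0}$, and the compatibility of $(x\mapsto z_{i}x)_{\ast}$ with the Hopf structure so that each $h^{(z_{i})}$ is again group-like --- are correct as written.
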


\noindent Conversely, we showed in I-3, that for $z \in \mathbb{C}_{p}$ such that $|z|_{p} < 1$, we have $\displaystyle \tau(p^{\alpha})\Li_{p,\alpha}(z) \rightarrow_{\alpha \rightarrow \infty}\Li_{p,X_{K}}^{\KZ}(z)$ ; whence :

\begin{Fact} The shuffle equation of $\Li_{p,\alpha}^{\dagger}$ for all $\alpha \in \mathbb{N}^{\ast}$ implies, via (\ref{eq:horizontality}), the shuffle equation for $\Li_{p,X_{K}}^{\KZ}$.
\end{Fact}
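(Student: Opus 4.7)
The plan is to deduce the shuffle relation for $\Li_{p,X_K}^{\KZ}$ from the shuffle relation for $\Li_{p,\alpha}^{\dagger}$ by passing to the limit $\alpha \to \infty$ via the convergence from I-3 recalled just above. This convergence is itself a direct consequence of equation (\ref{eq:horizontality}): on the disk $\{|z|_p < 1\}$ the secondary factor $\Li_{p,X_K^{(p^{\alpha})}}^{\KZ}(z^{p^{\alpha}})(e_0, {\Phi^{(z_1)}_{p,\alpha}}^{-1}e_{z_1}\Phi^{(z_1)}_{p,\alpha},\ldots)$ tends to $1$ as $\alpha \to \infty$ because $z^{p^{\alpha}} \to 0$, so (\ref{eq:horizontality}) specialises to $\tau(p^{\alpha})\Li_{p,\alpha}(z) \to \Li_{p,X_K}^{\KZ}(z)$.

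The key observation, which I would make first, is that the shuffle relation on $K\langle\langle e_Z\rangle\rangle$ is a closed condition invariant under the operators $\tau(\lambda)$: it consists of the countable family of bilinear identities $f[w]f[w'] = f[w \sh w']$ among individual coefficients, each of which is closed in the coefficient-wise topology and homogeneous with respect to the weight grading. It then follows from the hypothesis that $\tau(p^{\alpha})\Li_{p,\alpha}^{\dagger}(z)$ satisfies the shuffle relation for every $\alpha \in \mathbb{N}^{\ast}$. Identifying $\Li_{p,\alpha}$ with $\Li_{p,\alpha}^{\dagger}$ on the convergent disk and passing to the limit, one obtains the shuffle relation for $\Li_{p,X_K}^{\KZ}(z)$ for every $z$ with $|z|_p < 1$.

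Finally, since the shuffle equation for $\Li_{p,X_K}^{\KZ}$ is an equality of rigid analytic functions of $z$ on $X_K$, and these functions are determined by their Taylor expansions at $0$, the identity extends from the disk $\{|z|_p < 1\}$ to all of $X_K$ by rigid analytic continuation. The main obstacle, I expect, is the careful bookkeeping of the identification between $\Li_{p,\alpha}$ and $\Li_{p,\alpha}^{\dagger}$ on the overlap of their domains and the extension of the resulting shuffle identity from the residue disk at $0$ to the whole rigid analytic space $X_K^{\an}$. Both points are implicit in the constructions of part I and in the convergence statement from I-3, so no essentially new input is needed beyond the closure of the shuffle relation under coefficient-wise limits.
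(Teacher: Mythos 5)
Your argument follows the paper's own route: the Fact is deduced there in exactly the same way, by combining the shuffle relation for $\Li_{p,\alpha}^{\dagger}$ with the convergence $\tau(p^{\alpha})\Li_{p,\alpha}(z)\rightarrow\Li_{p,X_{K}}^{\KZ}(z)$ from I-3 on the disk $|z|_{p}<1$, the shuffle condition being closed coefficient-wise and stable under $\tau$, and then extending by analyticity. The only point to watch is your aside that this convergence is a \emph{direct} consequence of (\ref{eq:horizontality}) because the second factor tends to $1$: after the renormalization by $\tau(p^{-\alpha})$ needed to pass from $\tau(p^{\alpha})\Li_{p,X_{K}}^{\KZ}(z)$ back to $\Li_{p,X_{K}}^{\KZ}(z)$, the weight-$s$ coefficients of that factor must be $o(p^{\alpha s})$, which is precisely the nontrivial estimate of I-3 and not merely the statement that $z^{p^{\alpha}}\rightarrow 0$.
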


\noindent \textbf{Conclusion :} the problem of the explicitness of the shuffle relation of $\Li_{p,\alpha}^{\dagger}$ is equivalent to the combination of the one of the explicitness of the shuffle relation for $\har_{n}$ and the one of the shuffle relation for $\zeta_{p,\alpha}$. We do not need to use the formulas for  $\har_{n}^{\dagger_{p,\alpha}}$ coming from part I which reflect that it is a locally analytic-exponential function of $n \in \mathbb{N}^{\ast} \subset \mathbb{Z}_{p}$.
\newline The explicitness of the shuffle relation for $\har_{n}$ is clear : its solution is Euler's proof of the shuffle equation for multiple zeta values (in depth $\leq 2$, and generalized nowadays to any depth) reviewed in II-1, \S4.4.1 ; the explicitness of the shuffle relation for $\zeta_{p,\alpha}$ is partially addressed in this paper and will be addressed more completely in the final version of this paper.

\subsection{Remarks}

In I-1, \S4.1 we have constructed an isomorphism $\comp_{\frak{A}(U_{0\infty}^{\an})}$ between the algebra of rigid analytic functions  $\frak{A}(U_{0\infty}^{\an})$ and an explicit subspace of the one of the functions $\underset{l\rightarrow\infty}{\varprojlim}{\mathbb{Z}/Np^{l}\mathbb{Z}} \rightarrow K$, where $\underset{l\rightarrow\infty}{\varprojlim}\mathbb{Z}/Np^{u}\mathbb{Z}$ is isomorphic as a topological space to the disjoint union of $N$ copies of $\mathbb{Z}_{p}$, and we have a natural inclusion $\mathbb{N}^{\ast} \subset \varprojlim \mathbb{Z}/Np^{u}\mathbb{Z}$ defined by partitioning $\mathbb{N}^{\ast}$ into the classes of congruence modulo $N$ and taking $p$-adic completions. The restriction of $\comp_{\mathfrak{A}(U_{0\infty}^{\an})}$ to the subspace of codimension $1$ of $\mathfrak{A}(U_{0\infty}^{\an})$ whose elements are the functions vanishing at $0$ was defined through
$$ (f : z \mapsto \sum_{n > 0} c_{n} z^{n}) \mapsto (c : n \in \mathbb{N}^{\ast} \mapsto c_{n} \in K) $$
\noindent where the right-hand side above extended in a canonical way a function of $\varprojlim \mathbb{Z}/Np^{u}\mathbb{Z}$ by a property of continuity. The inverse $\comp_{\mathfrak{A}(U_{0\infty}^{\an})}^{-1}$
of $\comp_{\mathfrak{A}(U_{0\infty}^{\an})}$ has also a simple explicit formula (this formula is a priori particular to the example of $U_{0\infty}^{\an}$ ; I-1, \S4.1). This makes more precise the equivalence between the problem of reading explicitly (\ref{eq:shuffle overconvergent}) and (\ref{eq:shuffle overconvergent Taylor}).
\newline 
\newline We wonder whereas the shuffle relation of $\har^{\dagger_{p,\alpha}}$ can be read via its expression as a locally analytic function on $\varprojlim \mathbb{Z}/Np^{u}\mathbb{Z}$, in a way which shows how it is related to the shuffle relation for $\har_{p^{\alpha}}$. It seems that this is possible by using the proof of Theorem I-2.b, which gives an indirect way to retrieve the local analyticity of $\har^{\dagger_{p,\alpha}}$.
\newline When we take $z=\infty$ in (\ref{eq:shuffle overconvergent}), we obtain
\begin{equation} \label{eq:final eq} \zeta_{p,\alpha}^{(\infty)}(w)\zeta_{p,\alpha}^{(\infty)}(w') = \zeta_{p,\alpha}^{(\infty)}(w\text{ }\sh\text{ }w') 
\end{equation}
\noindent where $\zeta_{p,\alpha}^{(\infty)}$ is the analogue of $\zeta_{p,\alpha}$ defined with base-points at $(\vec{1}_{\infty},\vec{1}_{0})$. The shuffle relation of $\zeta_{p,\alpha}^{(\infty)}$ follows from the one of $\zeta_{p,\alpha}$, by writing the generating series  $\Phi_{p,\alpha}^{(\infty)}$ of  $\zeta_{p,\alpha}^{(\infty)}$ in terms of $\Phi_{p,\alpha}$ : 
$\Phi_{p,\alpha}^{(\infty)} = (z \mapsto \frac{1}{z})_{\ast}(\Phi_{p,\alpha})^{-1} \times \Phi_{p,\alpha}$ (I-1, \S5.2). However, we also wonder whereas equation (\ref{eq:final eq}) can be understood via the formula for $\har^{\dagger_{p,\alpha}}$ as a locally analytic function on $\varprojlim \mathbb{Z}/Np^{u}\mathbb{Z}$.

\newpage


\begin{thebibliography}{50}
\bibitem[BF]{Besser Furusho} A.Besser, H.Furusho - \emph{The double shuffle relations for p-adic multiple zeta values}, AMS Contemporary Math, Vol 416, (2006), 9-29.
\bibitem[CL]{CLS} B.Chiarellotto, B.Le Stum - \emph{F-isocristaux unipotents} - Compositio Math. 116, 81-110 (1999).
\bibitem[D]{Deligne} P.Deligne - \emph{Le groupe fondamental de la droite projective moins trois points, Galois Groups over $\mathbb{Q}$} (Berkeley, CA, 1987), Math. Sci. Res. Inst. Publ. 16, Springer-Verlag, New York, 1989.
\bibitem[DG]{Deligne Goncharov} P. Deligne, A.B. Goncharov, \emph{Groupes fondamentaux motiviques de Tate mixtes}, Ann. Sci. Ecole Norm. Sup. 38.1 , 2005, pp. 1-56
\bibitem[Dr]{Drinfeld} V.G.Drinfeld - \emph{On quasitriangular quasi-Hopf algebras and on a group that is closely connected with $\Gal(\overline{\mathbb{Q}}/\mathbb{Q})$"}, Algebra i Analiz, 2:4 (1990), 149-181 
\bibitem[F1]{Furusho 1} H.Furusho - \emph{p-adic multiple zeta values I -- p-adic multiple polylogarithms and the p-adic KZ equation}, Inventiones Mathematicae, Volume 155, Number 2, 253-286, (2004).
\bibitem[F2]{Furusho 2} H.Furusho - \emph{p-adic multiple zeta values II -- tannakian interpretations}, Amer.J.Math, Vol 129, No 4, (2007),1105-1144.
\bibitem[FJ]{Furusho Jafari} H.Furusho, A.Jafari - \emph{Regularization and generalized double shuffle relations for p-adic multiple zeta values}, Compositio Math. Vol 143, (2007), 1089-1107.
\bibitem[J]{J13} D.Jarossay - \emph{Associator equations and depth filtrations - I}, arXiv:1601.01161
\bibitem[Ra]{Racinet} G.Racinet - \emph{Double m\'{e}lange des polylogarithmes multiples aux racines de l'unit\'{e}} - Publ. Math. IHES Vol. 95 (2002): 185-231.
\bibitem[S1]{Shiho 1} A.Shiho - \emph{Crystalline fundamental groups. I. Isocristals on log crystalline site and log convergent site}, J. Math. Soc. Univ Tokyo 7 (2000) no. 4, 509-656
\bibitem[S2]{Shiho 2} A.Shiho - \emph{Crystalline fundamental groups. II. Log convergent cohomology and rigid cohomology},  
J. Math. Soc. Univ. Tokyo 9 (2002), no. 1, 1-163
\bibitem[U]{Unver} S.Unver - \emph{Cyclotomic $p$-adic multi-zeta values}. arXiv:1701.05729
\bibitem[V]{Vologodsky} V.Vologodsky, \emph{Hodge structure on the fundamental group and its application to p-adic integration}, Moscow Math. J. 3 (2003), no. 1, 205-247.
\end{thebibliography}
\end{document}